\theoremstyle{definition}
\newtheorem{definition}{Definition}[section]
\theoremstyle{plain}
\newtheorem{theorem}{Theorem}
\newtheorem*{theorem*}{Theorem}
\newtheorem{proposition}[definition]{Proposition}
\newtheorem*{proposition*}{Proposition}
\newtheorem{lemma}[definition]{Lemma}
\newtheorem*{lemma*}{Lemma}
\newtheorem*{sublemma*}{Sub-lemma}
\newtheorem{observation}{Observation}
\newtheorem{fact}{Fact}
\newtheorem*{fact*}{Fact}
\newtheorem{claim}{Claim}
\newtheorem*{claim*}{Claim}
\theoremstyle{remark}
\newtheorem{remark}[definition]{Remark}
\newcommand{\C}{\mathbf{C}}
\newcommand{\R}{\mathbf{R}}
\newcommand{\gl}{\mathfrak{gl}}
\renewcommand{\sl}{\mathfrak{sl}}
\renewcommand{\a}{\mathfrak{a}}
\newcommand{\g}{\mathfrak{g}}
\newcommand{\n}{\mathfrak{n}}
\newcommand{\so}{\mathfrak{so}}
\newcommand{\p}{\mathfrak{p}}
\DeclareMathOperator{\Diff}{Diff}
\DeclareMathOperator{\Hom}{Hom}
\DeclareMathOperator{\Ad}{Ad}
\DeclareMathOperator{\Span}{Span}
\DeclareMathOperator{\Aut}{Aut}
\DeclareMathOperator{\Proj}{Proj}
\DeclareMathOperator{\Conv}{Conv}
\DeclareMathOperator{\GL}{GL}
\DeclareMathOperator{\SL}{SL}
\DeclareMathOperator{\PGL}{PGL}
\DeclareMathOperator{\PO}{PO}
\DeclareMathOperator{\Rk}{Rk}
\DeclareMathOperator{\Conf}{Conf}
\DeclareMathOperator{\Tr}{Tr}
\DeclareMathOperator{\id}{id}
\DeclareMathOperator{\diag}{diag}
\DeclareMathOperator{\Supp}{Supp}
\renewcommand{\S}{\mathbf{S}}
\newcommand{\Ein}{\mathbf{Ein}} 
\newcommand{\X}{\mathbf{X}}
\renewcommand{\epsilon}{\varepsilon}
\renewcommand{\geq}{\geqslant}
\renewcommand{\leq}{\leqslant}
\renewcommand{\bar}{\overline}
\newcommand{\gk}{\tilde{\gamma}_k}
\renewcommand{\gl}{\tilde{\gamma}_l}
\newcommand{\D}{\mathfrak{D}}
\newcommand{\Db}{D}
\newcommand{\pein}{\pi_{\X}}
\title[$(G,\X)$-manifolds with lattice actions]{Projective and conformal closed manifolds with a higher-rank lattice action}
\author{Vincent Pecastaing}
\thanks{Partially supported by FNR grants INTER/ANR/15/11211745 and
OPEN/16/11405402.}
\address{Department of Mathematics, University of Luxembourg,
Maison du nombre, 6 avenue de la Fonte, L-4364 Esch-sur-Alzette, Luxembourg}
\email{vincent.pecastaing@uni.lu}
\date{\today}
\begin{document}

\maketitle

\begin{abstract}
We prove global results about actions of cocompact lattices in higher-rank simple Lie groups on closed manifolds endowed with either a projective class of connections or a conformal class of pseudo-Riemannian metrics of signature $(p,q)$, with $\min(p,q) \geq 2$. In the continuity of a recent article \cite{pecastaing_conformal_lattice}, provided that such a structure is locally equivalent to its model $\X$, the main question treated here is the completeness of the associated $(G,\X)$-structure. Because of the similarities between the model spaces of projective geometry and non-Lorentzian conformal geometry, a number of arguments apply in both contexts. We therefore present the proofs in parallel. The conclusion is that in both cases, when the real-rank is maximal, the manifold is globally equivalent to either the model space $\X$ or its double cover.
\end{abstract}

\section{Introduction}
\label{s:intro}

Zimmer's program suggests that actions of lattices in semi-simple Lie groups on closed manifolds have to be closely related to an homogeneous model. We give in this article two geometric results that confirm this principle and are in the continuity of previous investigations for conformal actions \cite{pecastaing_conformal_lattice}.

Let $\Gamma$ be a lattice in a simple Lie group $G$ of real-rank at least $2$. Among all possible ``geometric actions'' $\rho : \Gamma \rightarrow \Diff(M,\mathcal{S})$ on a closed manifold $M$, we are especially interested in those for which the geometric structure $\mathcal{S}$ is \textit{non-unimodular}. This is due to the fact that these structures do not naturally define a finite $\Gamma$-invariant measure, making more difficult the use of celebrated results such as Zimmer's cocycle super-rigidity. The new powerful tools about invariant measures, introduced in \cite{BRHW} and used in \cite{BFH} for proving Zimmer's conjectures, invite us to pay attention to these non-volume preserving dynamics.

Typical such structures are \textit{parabolic Cartan geometries} (\cite{cap_slovak}) which are curved versions of a given flag manifold $G/P$, because on $G/P$ itself, there exists no finite $\Gamma$-invariant measure. We discuss in this article two cases of actions on parabolic geometries: those preserving a projective class $[\nabla]$ of linear connections and those preserving a conformal case $[g]$ of pseudo-Riemannian metrics.

We remind that two linear connections $\nabla$ and $\nabla'$ on a same manifold $M$ are said to be projectively equivalent if they define the same geodesics up to parametrization. For torsion free connections, this means that there exists a $1$-form $\alpha$ such that $\nabla_X' Y= \nabla_X Y + \alpha(X)Y+\alpha(Y)X$ for all vector fields $X,Y$ (\cite{tanaka}). A projective class $[\nabla]$ is an equivalence class of projectively equivalent linear connections, and the projective group $\Proj(M,[\nabla])$ is the group of diffeomorphisms that preserve this class. A projective structure on a manifold $M^n$ is the same as the data of a Cartan geometry on $M$ modeled on the projective space $\X = \R P^n$ (\cite{kobayashi_nagano}).
Two pseudo-Riemannian metrics $g$ and $g'$ on $M$ are said to be conformal if there exists a smooth positive function $\varphi : M \rightarrow \R_{>0}$ such that $g' = \varphi g$. A conformal class $[g]$ is an equivalence class of conformal metrics and the conformal group $\Conf(M,[g])$ is the group of diffeomorphisms preserving this class. When $n = \dim M \geq 3$, a conformal class of signature $(p,q)$ on $M$ is the same as a normalized Cartan geometry on $M$ modeled on $\X = \Ein^{p,q}$, the model space of conformal geometry discussed below in Section \ref{s:preliminaries}.

From \cite{BFH}, we know that if $\Gamma$ is cocompact and the action $\rho : \Gamma \rightarrow \Diff(M)$ has infinite image, then $\Rk_{\R} G \leq n = \dim M$ and that in the limit case $\Rk_{\R}G = n$, the restricted root-system of $\g$ is $A_n$. It moreover follows from \cite{zhang} that $\g$ is not isomorphic to $\sl(n+1,\C)$. Of course, the natural examples in this limit case are the restriction to $\Gamma$ of the projective action of $\SL(n+1,\R)$ on $\S^n$ or $\R P^n$, and conjecturally they are supposed to be the only examples. It is thus natural to start studying curved versions of these models, \textit{i.e.} projective actions $\Gamma \rightarrow \Proj(M^n,[\nabla])$ with $\Rk_{\R}G=n$.

In \cite{pecastaing_conformal_lattice}, we proved that if $\Gamma$ is uniform and has an unbounded conformal action on a closed pseudo-Riemannian manifold $(M,g)$ of signature $(p,q)$, with $p+q \geq 3$, then $\Rk_{\R} G \leq \min(p,q) + 1$ and that $(M,g)$ is conformally flat when $\Rk_{\R} G = \min(p,q)+1$. This means that the conformal class $[g]$ defines a $\Gamma$-invariant atlas of $(\Conf(\Ein^{p,q}),\Ein^{p,q})$-structure on $M$, which we would like to understand. Projective flatness in the case of a projective action in maximal rank can be derived by the same kind of arguments (see Section \ref{s:projective_flatness}).

So, in both projective and conformal cases, if $\X$ denotes the model space and $G_{\X}$ its automorphisms group, it turns out that if $\rho(\Gamma)$ is unbounded, then $\Rk_{\R} G \leq \Rk_{\R} G_{\X}$ and that the structure is \textit{flat} when equality holds. We see here a strong similarity with Theorem 5 of \cite{bader_frances_melnick} where semi-simple Lie groups actions on parabolic closed manifolds are considered. To obtain a similar conclusion for uniform lattices in such groups, the main problem here is thus to understand globally this $\Gamma$-invariant $(G_{\X},\X)$-structure on $M$. Even when $\Gamma$ is large, this problem is interesting notably because its group structure may not be ``visible'' at a local scale, contrarily to the case of a Lie group action which gives rise to a Lie algebra of vector fields.

The model space of conformal geometry of signature $(p,q)$ is $\Ein^{p,q} = (\S^p \times \S^q)/\{\pm \id\}$ endowed with the conformal class $[-g_{\S^p} \oplus g_{\S^q}]$, where $-\id$ acts via the product of the antipodal maps. When $\min(p,q) \neq 1$, the model spaces $\R P^n$ and $\Ein^{p,q}$ have very similar patterns, in particular they both are natural compactifications of an affine space, via affine charts and stereographic projections, and their universal cover is a $2$-sheeted cover.

Because of these similarities, our approach works for both closed $\R P^n$-manifolds and $\Ein^{p,q}$-manifolds, with \textit{non-Lorentzian} signature $\min(p,q) \geq 2$. The Lorentzian model space $\Ein^{1,n-1}$ behaves a bit differently, due to the non-compactness of its universal cover that invalidates arguments used for proving the invectivity of the developing map. We leave its case for further investigations.

\subsection{Main results}
\label{ss:intro:main_results}

Combined with \cite{pecastaing_conformal_lattice}, we obtain the following global conclusions for actions of uniform lattices of maximal real-rank.

\begin{theorem}
\label{thm:projective}
Let $G$ be a connected simple Lie group with finite center and real-rank $n \geq 2$, and let $\Gamma < G$ be a cocompact lattice. Let $(M^n,\nabla)$ be a closed manifold endowed with a linear connection $\nabla$. Let $\rho : \Gamma \rightarrow \Proj(M,[\nabla])$ be a projective action.

If $\rho(\Gamma)$ is infinite, then $(M,[\nabla])$ is projectively equivalent to either $\S^n$ or $\R P^n$ with their standard projective structures. 
\end{theorem}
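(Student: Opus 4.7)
The plan is to combine the projective flatness established in Section~\ref{s:projective_flatness} with a completeness argument for the resulting $(\PGL(n{+}1,\R),\R P^n)$-structure. Under the given hypotheses, that section yields that $(M,[\nabla])$ is projectively flat, so one obtains a developing map $\delta \colon \tilde M \to \S^n$ (a local diffeomorphism by dimension count) and a holonomy representation $\rho_{\mathrm{hol}} \colon \pi_1(M) \to \SL^{\pm}(n{+}1,\R)$ intertwining $\delta$ with the natural action on $\S^n$. It will suffice to show that $\delta$ is a diffeomorphism: $M$ will then be isomorphic to $\S^n/\pi_1(M)$, and Zariski-density of the $\Gamma$-image (below) will force $\pi_1(M) \subseteq \{\pm I\}$, giving $M \simeq \S^n$ or $M \simeq \R P^n$ with their standard projective structures.

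Next I would set up the equivariance. Passing to a finite-index subgroup if necessary, the $\Gamma$-action on $M$ lifts to an action on $\tilde M$ of an extension $\tilde\Gamma$ of $\Gamma$ by $\pi_1(M)$, together with a representation $\tilde\rho \colon \tilde\Gamma \to \SL^{\pm}(n{+}1,\R)$ such that $\delta$ is $\tilde\Gamma$-equivariant. The restriction of $\tilde\rho$ to $\pi_1(M)$ is $\rho_{\mathrm{hol}}$, while the quotient representation of $\Gamma$ realises the projective action. By the results of \cite{BFH} the restricted root system of $\g$ is $A_n$, and by \cite{zhang} we have $\g \not\simeq \sl(n{+}1,\C)$; Margulis super-rigidity applied to the $\Gamma$-representation in $\PGL(n{+}1,\R)$ should then force the image of $\tilde\rho$ to be Zariski-dense in $\SL^{\pm}(n{+}1,\R)$, with the representation of $\Gamma$ standard up to a bounded error.

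The heart of the proof is to show that $\delta$ is a diffeomorphism. Its image is open and $\tilde\rho(\tilde\Gamma)$-invariant; Zariski-density together with the fact that proper closed invariant subsets of $\S^n$ under the full projective group are contained in finite unions of projective subspaces excludes a non-empty complement, yielding surjectivity. For injectivity I would proceed by contradiction: a coincidence $\delta(x) = \delta(y)$ with $x \neq y$ in $\tilde M$, propagated along $\tilde\Gamma$-orbits using the hyperbolic dynamics of elements of $\Gamma$ on $\S^n$ in the spirit of Bader--Frances--Melnick, should contradict the proper discontinuity of $\pi_1(M)$ on $\tilde M$.

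The injectivity step is where I expect the main obstacle to lie. Surjectivity is essentially forced once the holonomy is known to be large, but completeness of the $(G_\X,\X)$-structure in the presence of a higher-rank lattice action requires a careful analysis of how the $\Gamma$-dynamics on $M$ interact with potential collisions of sheets of $\delta$ on $\tilde M$, genuinely exploiting the higher-rank assumption together with the compactness of $M$. Once injectivity is in hand, Zariski-density of $\tilde\rho(\tilde\Gamma)$ inside the normaliser of $\pi_1(M)$ immediately yields $\pi_1(M) \subseteq \{\pm I\}$, completing the proof.
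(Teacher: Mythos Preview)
Your overall architecture---flatness first, then completeness of the resulting $(G_\X,\X)$-structure---is exactly the paper's, and your use of Section~\ref{s:projective_flatness} is correct. The genuine gap is the injectivity of the developing map, which you yourself flag: the sentence ``propagated along $\tilde\Gamma$-orbits using hyperbolic dynamics \ldots\ should contradict proper discontinuity'' is not an argument, and it is not clear how to make it one. A coincidence $\delta(x)=\delta(y)$ does not, by itself, interact with proper discontinuity of $\pi_1(M)$ in any direct way, and pushing such a coincidence around by proximal elements of $\tilde\rho(\tilde\Gamma)$ typically just produces more coincidences rather than an accumulation of deck transformations. Your surjectivity sketch is also shaky as stated: the stabiliser in $\SL^{\pm}(n{+}1,\R)$ of a closed (not Zariski-closed) subset of $\S^n$ need not be Zariski-closed, so Zariski-density of the holonomy does not by itself rule out a proper closed invariant complement; one has to invoke proximality and density of attracting fixed points.

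The paper's route to injectivity is quite different and does not use the holonomy representation or super-rigidity at all. The uniformly Lyapunov regular sequences that already appear in the flatness argument are reused to show (Proposition~\ref{prop:atlas}) that every point of $M$ lies in a \emph{maximal chart}: an open set on which both $\pi$ and the developing map are injective and whose image is a full hemisphere of $\S^n$. From that point on the lattice disappears from the picture: Theorem~\ref{thm:atlas} is a purely geometric statement about compact $(G_\X,\X)$-manifolds covered by maximal charts. Injectivity is proved by an inductive argument on a covering $(V_i)$ by maximal charts, analysing the finitely many combinatorial configurations of hemispheres in $\S^n$ for which $\D(V_{m+1})\cap(\D(V_1)\cup\cdots\cup\D(V_m))$ could fail to be connected, and resolving each by a ``thickening'' of one chart (Lemma~\ref{lem:thickenings}). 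The bound $|\pi_1(M)|\leq 2$ then falls out immediately from the fact that the $\pi_1(M)$-translates of a hemisphere are pairwise disjoint in $\S^n$, with no appeal to Zariski-density. So the key idea you are missing is precisely the production and exploitation of maximal charts.
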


Thus the action is a group homomorphism into $\SL^{\pm}(n+1,\R)$ or $\PGL(n+1,\R)$ with infinite image. By Margulis' super-rigidity theorem, $\g \simeq \sl(n+1,\R)$ and $\rho$ extends to a locally faithful action of $\tilde{\SL}(n+1,\R)$. This result can be viewed as a projective counter-part of a result of Zeghib \cite{zeghib_tore} on affine, volume-preserving actions of lattices on closed manifolds, in which he improved a result of Goetze \cite{goetze}. See also \cite{zimmer86_connections,feres92}.

For conformal actions, we obtain a similar statement when the real-rank is maximal.

\begin{theorem}
\label{thm:conform}
Let $(M,g)$ be a closed pseudo-Riemannian manifold of signature $(p,q)$, with metric index $\min(p,q) \geq 2$, and let $\Gamma < G$ be a uniform lattice in a simple Lie group of real-rank $\min(p,q)+1$. Let $\rho : \Gamma \rightarrow \Conf(M,g)$ be a conformal action. 

If $\rho(\Gamma)$ is unbounded in $\Conf(M,g)$, then $(M,g)$ is conformally equivalent to $\Ein^{p,q}$ or its double cover $\tilde{\Ein}^{p,q} = (\S^p \times \S^q,[-g_{\S^p} \oplus g_{\S^q}])$.
\end{theorem}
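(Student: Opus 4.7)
The strategy is to combine the conformal flatness result of \cite{pecastaing_conformal_lattice} --- which gives $(M,g)$ the structure of a $(\Conf(\Ein^{p,q}),\Ein^{p,q})$-manifold preserved by $\rho(\Gamma)$ --- with the large symmetry coming from the lattice action, in order to establish completeness of this $(G,\X)$-structure. Since $\min(p,q) \geq 2$, the universal cover of the model is $\tilde{\Ein}^{p,q} = \S^p \times \S^q$, a compact simply connected space. My goal would be to prove that the developing map $\delta : \tilde M \to \tilde{\Ein}^{p,q}$ is a diffeomorphism. Once this is established, $M$ is realized as the quotient of $\tilde{\Ein}^{p,q}$ by $\pi_1(M)$ acting freely, properly discontinuously, and conformally; such a group must be a subgroup of the deck transformation group $\{\pm \id\}$ of the double cover $\tilde{\Ein}^{p,q} \to \Ein^{p,q}$, leaving exactly the two quotients of the statement.

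To produce enough symmetry to control $\delta$, I would lift the $\Gamma$-action on $M$ to an action on $\tilde M$ of an extension $\hat{\Gamma}$ of $\Gamma$ by $\pi_1(M)$. For any lift $\tilde{\gamma} \in \hat{\Gamma}$, the composition $\delta \circ \tilde{\gamma}$ is again a developing map and hence equals $f_{\tilde{\gamma}} \circ \delta$ for a unique $f_{\tilde{\gamma}} \in \Conf(\tilde{\Ein}^{p,q})$; the assignment $\tilde{\gamma} \mapsto f_{\tilde{\gamma}}$ is a homomorphism that admits $\rho(\Gamma)$ as a quotient and is therefore unbounded. Applying Margulis super-rigidity to a finite-index subgroup, this homomorphism extends continuously to a representation $\bar{\rho}: \tilde{G} \to \Conf(\tilde{\Ein}^{p,q})$. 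The assumption $\Rk_{\R} G = \min(p,q)+1 = \Rk_{\R}\Conf(\Ein^{p,q})$, combined with non-compactness of the image, forces $\bar{\rho}(\tilde G)$ to be a large subgroup --- in particular one that acts transitively on $\tilde{\Ein}^{p,q}$.

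Surjectivity of $\delta$ would then follow from the fact that $\delta(\tilde M)$ is open, non-empty, and invariant under the transitive $\bar{\rho}$-action, after correctly matching the $\hat{\Gamma}$-equivariance with the extended $\tilde{G}$-action. The main obstacle is the injectivity of $\delta$, equivalently its properness: a proper local diffeomorphism onto a compact simply connected manifold is a diffeomorphism. I would attack this by assuming $\delta(\tilde x_1) = \delta(\tilde x_2)$ for distinct $\tilde x_1, \tilde x_2 \in \tilde M$ and applying a sequence $\rho(\gamma_n)$ going to infinity in $\Conf(M,[g])$: exploiting the known ``north--south'' dynamics of non-compact sequences in $\Conf(\Ein^{p,q})$ acting on the model space, together with the $\bar{\rho}$-equivariance of $\delta$, one should be able to propagate the collision to a dense set of pairs, contradicting the local injectivity of $\delta$. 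Making this dynamical contradiction rigorous --- with the correct choice of lifts to $\hat{\Gamma}$ and careful identification of holonomy elements with elements of $\bar{\rho}(\tilde G)$ --- is where the heart of the argument lies and is the step I expect to be the main obstacle.
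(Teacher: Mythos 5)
Your reduction is the right one: conformal flatness from \cite{pecastaing_conformal_lattice} gives the $(G_{\X},\X)$-structure, and once the developing map $\D:\tilde M\to\tilde{\Ein}^{p,q}$ is shown to be a diffeomorphism, $\pi_1(M)$ embeds in the deck group $\{\pm\id\}$ of $\tilde{\Ein}^{p,q}\to\Ein^{p,q}$, leaving exactly the two listed models. But the route you propose for proving that $\D$ is a diffeomorphism has genuine gaps. First, Margulis super-rigidity applies to $\Gamma$, not to the extension $\hat\Gamma$ of $\Gamma$ by $\pi_1(M)$ on which the holonomy homomorphism is defined; the holonomy does not descend to $\Gamma$ unless $\pi_1(M)$ has trivial holonomy image, which is essentially part of what you are trying to prove. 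Second, even granting an extension $\bar\rho:\tilde G\to\Conf(\tilde{\Ein}^{p,q})$, this is only a representation on the model: $\tilde G$ does not act on $\tilde M$, so $\D(\tilde M)$ is invariant only under the countable holonomy group, and transitivity of $\bar\rho(\tilde G)$ on the model yields at best density, not surjectivity, of $\D$. (Transitivity itself is not automatic either: super-rigidity only forces $\g\simeq\so(p+1,k)$ with $p+1\le k\le q+1$, and for $k<q+1$ the corresponding subgroup of $\Conf(\Ein^{p,q})$ does not act transitively on $\Ein^{p,q}$.) Third, and most importantly, the injectivity step --- which you yourself flag as the heart of the matter --- is not yet an argument: a dense set of colliding pairs does not contradict local injectivity unless you can force two \emph{distinct} preimages to converge to one another, and north--south dynamics on the model does not by itself produce that on $\tilde M$.

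The paper closes exactly this gap by a different mechanism. The uniformly contracting, Lyapunov-regular sequences $(\gamma_k)$ of Lemma \ref{lem:localisation_lyapunov_data} are run backwards: the sets $\gamma_k^{-1}U$ stabilize into a \emph{maximal chart}, an open set on which both the covering $\pi$ and $\D$ are injective and whose developed image is a full Minkowski patch (Proposition \ref{prop:atlas}, via the normal form $g_k=e^{X_k}l_ka_kl_k'$ of Lemma \ref{lem:gk}). Injectivity of $\D$ is then a purely geometric statement about compact manifolds covered by maximal charts (Theorem \ref{thm:atlas}), proved by induction over a chain of charts, a classification of the disconnected configurations of intersections of Minkowski patches in $\tilde{\Ein}^{p,q}$ (Lemma \ref{lem:configuration_conforme}), and the thickening Lemma \ref{lem:thickenings}. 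No extension of the action to $G$ and no super-rigidity applied to the holonomy is needed at that stage. If you want to salvage your approach, the missing ingredient is precisely a substitute for the maximal charts: a way of converting the contraction dynamics into large injectivity domains for $\D$.
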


So, we obtain for $\min(p,q) > 1$ the same conclusion as \cite{bader_nevo,frances_zeghib} where the authors considered conformal actions of all of $G$, also in the case $\Rk_{\R} G = \min(p,q) +1$. 

The action $\rho$ is thus a group homorphism $\Gamma \rightarrow \PO(p+1,q+1)$ or $\Gamma \rightarrow O(p+1,q+1)$ whose image is unbounded. Let us say that $p \leq q$. Using Margulis super-rigidity, we deduce that $\g \simeq \so(p+1,k)$, with $p+1 \leq k \leq q+1$ and that the action extends to a Lie group action up to a ``compact noise'': up to passing to a finite cover of $G$ and lifting $\Gamma$ to it, there exists a compact Lie subgroup $K<\Conf(M,g)$, a Lie group homorphism $\bar{\rho} : G \rightarrow \Conf(M,g)$ with finite kernel and such that $K$ centralizes $\bar{\rho}(G)$, and $\rho_K : \Gamma \rightarrow K$ such that $\rho(\gamma) = \bar{\rho}(\gamma) \rho_K(\gamma)$ for all $\gamma \in \Gamma$. 

Thus, this result shows that essentially, there are no more conformal actions of $\Gamma$ than conformal actions of $G$.

\begin{remark}
It is known (\cite{ferrand71,obata71}) that up to conformal equivalence, for any $n \geq 2$, the only compact Riemannian manifold of dimension $n$ with non-compact conformal group is the standard Riemannian sphere $\S^n$, whose conformal group is $\PO(1,n+1)$. Thus, any conformal action of a higher-rank lattice $\Gamma$ on a compact Riemannian manifold factorizes through a compact Lie group.
\end{remark}

\begin{remark}
It has to be noted that if it exists, a global conclusion for conformal actions of rank $2$ uniform lattices on closed Lorentzian manifolds shall be a bit more complicated as it can be seen in the conclusions Theorem 3 of \cite{frances_zeghib} about semi-simple Lie groups actions.
\end{remark}

\begin{remark}
We also note that the assumption $\Rk_{\R} G = \min(p,q)+1$ it this theorem is necessary as, for instance, \textit{all of} $O(p,q)$ acts conformally on the Hopf manifold $(\R^{p,q}\setminus\{0\})/<2\id>$. The later is conformally flat, but the examples of \cite{frances_counter_examples} provide, for $\min(p,q) \geq 3$, examples of closed, non-conformally flat pseudo-Riemannian manifolds of signature $(p,q)$, on which $O(p-2,q-2)$ acts conformally and essentially. 
\end{remark}

\subsection{Structure of the proof: atlas of maximal charts}
\label{ss:intro:structure_of_proof}

Let $n \geq 2$ and $(p,q)$ such that $\min(p,q) \geq 2$. Let $\X$ be either $\R P^n$ or $\Ein^{p,q}$ and let $G_{\X} = \PGL(n+1,\R)$ or $\PO(p+1,q+1)$ accordingly. Let $G$ be a simple Lie group with finite center, and let $\Gamma<G$ be a uniform lattice. We assume $\Rk_{\R}G = n$ if $\X = \R P^n$ and $\Rk_{\R}G=\min(p,q)+1$ if $\X = \Ein^{p,q}$.

The dynamical starting point of our proof is the existence of sequences $(\gamma_k)$ in $\Gamma$ admitting a \textit{uniformly contracting} dynamical behavior, which are used in \cite{pecastaing_conformal_lattice} for obtaining conformal flatness. With no substantially different arguments - and even less efforts -, we can also exhibit such sequences for projective actions of $\Gamma$, and projective flatness similarly follows by considering the associated Cartan connection. We explain this in the last Section \ref{s:projective_flatness}, and start directly working with locally flat projective and conformal closed manifolds.

These sequences $(\gamma_k)$ contract topologically an open set $U$ to a point $x \in U$ and their derivatives are moreover Lyapunov regular with a uniform Lyapunov spectrum, see Section \ref{ss:uniformly_lyapunov}. The idea is to go backward and consider the $\gamma_k^{-1} U$. We show in Proposition \ref{prop:atlas} that at the limit, the sequence $(\gamma_k^{-1} U)$ gives rise to some maximal domain $U_{\infty}$, which is a trivializing open set for the universal cover $\tilde{M} \rightarrow M$ and such that for any lift $\tilde{U}_{\infty}$, the developing map $\tilde{M} \rightarrow \X$ is injective in restriction to $\tilde{U}_{\infty}$ and sends it to an affine chart domain if $\X = \R P^n$ or a Minkowski patch if $\X = \Ein^{p,q}$. We call such domains $U_{\infty}$ \textit{maximal charts}, and Proposition \ref{prop:atlas} shows that any point of $M$ is contained in a maximal chart.

So, once Proposition \ref{prop:atlas} is established, Theorem \ref{thm:projective} and Theorem \ref{thm:conform} will be a consequence of the following.

\begin{theorem}
\label{thm:atlas}
Let $\X = \R P^n$ or $\Ein^{p,q}$, $n \geq 2$, $\min(p,q) \geq 2$. Let $M$ be a compact manifold endowed with a $(G_{\X},\X)$-structure. Assume that any point of $M$ admits a neighborhood which is either projectively equivalent to $\R^n$ or conformally equivalent to $\R^{p,q}$.
Then, $M$ is isomorphic, as a $(G_{\X},\X)$-manifold, to either $\X$ or $\tilde{\X}$.
\end{theorem}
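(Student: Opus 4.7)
The plan is to show that the lifted developing map $\tilde\delta : \tilde M \to \tilde\X$ is a diffeomorphism, and then identify $M$ by analyzing the $\pi_1(M)$-action on $\tilde\X$. Write $\sigma$ for the nontrivial deck transformation of the $2$-to-$1$ cover $\tilde\X \to \X$. Since $\tilde M$ is simply connected, the developing map $\delta : \tilde M \to \X$ admits a unique étale lift $\tilde\delta : \tilde M \to \tilde\X$, and the holonomy lifts accordingly to $\tilde\rho : \pi_1(M) \to \Aut(\tilde\X)$. The hypothesis says exactly that every $\tilde x \in \tilde M$ sits in an open neighborhood that $\tilde\delta$ maps diffeomorphically onto an \emph{open affine patch} of $\tilde\X$: an open hemisphere if $\tilde\X = \S^n$, and one connected component of the complement of an isotropic cone if $\tilde\X = \S^p \times \S^q$. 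In either case, the complement of a patch in $\tilde\X$ is a closed, nowhere-dense real hypersurface.

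The central technical step is to show $\tilde\delta$ is a covering map, hence, as an étale map of simply connected manifolds, a diffeomorphism. I would prove this via properness: given a compact $K \subset \tilde\X$ and a sequence $\tilde x_n \in \tilde\delta^{-1}(K)$, extract using compactness of $M$ a subsequence with $p(\tilde x_n) \to x \in M$. Fix a maximal chart $U \ni x$ and a lift $\tilde U_0 \subset \tilde M$ with patch $P_0 = \tilde\delta(\tilde U_0)$, and let $\tilde y_n \in \tilde U_0$ be the $\tilde U_0$-lift of $p(\tilde x_n)$; then $\tilde y_n \to \tilde y \in \tilde U_0$ and $\tilde\delta(\tilde y_n) \to \tilde\delta(\tilde y) \in P_0$. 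Writing $\tilde x_n = \gamma_n \cdot \tilde y_n$ with $\gamma_n \in \pi_1(M)$, equivariance gives $\tilde\rho(\gamma_n)\tilde\delta(\tilde y_n) \in K$. If $(\gamma_n)$ takes only finitely many values then a subsequence of $(\tilde x_n)$ converges in $\tilde M$. Otherwise $\tilde\rho(\gamma_n)$ is unbounded in $\Aut(\tilde\X)$, and a Cartan decomposition argument shows that, along a subsequence, $\tilde\rho(\gamma_n)$ contracts $\tilde\X$ off a nowhere-dense closed ``repelling'' hypersurface onto a lower-dimensional attractor. By placing the base point generically within $U$ so that $\tilde\delta(\tilde y)$ avoids the repellor, the image $\tilde\rho(\gamma_n)\tilde\delta(\tilde y_n)$ is forced to collapse to the attractor in a way incompatible with its containment in $K$, yielding the contradiction.

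Once $\tilde\delta$ is a diffeomorphism, the $\pi_1(M)$-action on $\tilde\X$ is free, properly discontinuous, and by elements of $\Aut(\tilde\X)$; since $\tilde\X$ is compact, $\pi_1(M)$ is finite of some order $k$. The lifts of any maximal chart $U \subset M$ consist of $k$ pairwise disjoint open affine patches of $\tilde\X$. The crucial combinatorial lemma, which I would verify case by case, is that \emph{two distinct open affine patches of $\tilde\X$ can be disjoint only if they are exchanged by $\sigma$}. In the projective case, two open hemispheres $\{\langle v_i,\cdot\rangle > 0\}$ of $\S^n$ are disjoint iff $v_2 = -v_1$, the antipodal relation defining $\sigma$. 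In the conformal case, writing a Minkowski patch as $\{\langle v^{(1)}, x\rangle > \langle v^{(2)}, y\rangle\}$ for a null vector $v = (v^{(1)}, v^{(2)}) \in \R^{p+1}\times\R^{q+1}$, a direct extremization on $\S^p\times\S^q$ yields the same conclusion $v_2 = -v_1$, which is again the relation $\sigma$. Hence $k \leq 2$: if $k = 1$ then $M \cong \tilde\X$, and if $k = 2$ the nontrivial generator of $\pi_1(M)$ must act as $\sigma$, giving $M \cong \tilde\X/\langle\sigma\rangle = \X$.

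The main obstacle is the properness step: controlling unbounded sequences in $\Aut(\tilde\X)$ acting on $\tilde\X$ requires the rigidity of the Cartan decomposition, and one must arrange the initial base point to avoid a nowhere-dense closed repellor. Once this is settled, the combinatorial disjointness lemma for affine patches closes the argument cleanly.
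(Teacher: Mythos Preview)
Your overall strategy --- prove that $\tilde\delta$ is a diffeomorphism, then use the disjointness lemma for affine patches to bound $|\pi_1(M)|$ --- is exactly the paper's, and your final combinatorial lemma (two distinct patches are disjoint only if antipodal) is correct and is precisely how the paper finishes. The problem lies entirely in the properness step.

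There are two gaps. First, the implication ``$(\gamma_n)$ takes infinitely many values $\Rightarrow$ $\tilde\rho(\gamma_n)$ is unbounded'' is unjustified: at this stage the holonomy $\tilde\rho : \pi_1(M) \to \Aut(\tilde\X)$ is not known to be injective or to have discrete image. Second, and more seriously, even granting that $\tilde\rho(\gamma_n)$ diverges with attractor/repellor dynamics, no contradiction follows. You have $\tilde\rho(\gamma_n)\tilde\delta(\tilde y_n) = \tilde\delta(\tilde x_n) \in K$; if $\tilde\delta(\tilde y)$ avoids the repellor, these points accumulate on the attractor, which lies in the closed set $K$ --- perfectly consistent. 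The compact set $K$ is \emph{given}, not chosen, and may well contain the attractor. Your ``place the base point generically'' move only adjusts where $\tilde\delta(\tilde y)$ sits relative to the repellor; it cannot force $\tilde\delta(\tilde x_n)$ out of $K$, nor does it produce convergence of $\tilde x_n$ in $\tilde M$.

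The paper does not attempt properness. It proves injectivity of $\tilde\delta$ directly by induction on a covering $\{V_i\}$ of $\tilde M$ by maximal charts: as long as $\tilde\delta(V_{m+1}) \cap \bigl(\tilde\delta(V_1)\cup\cdots\cup\tilde\delta(V_m)\bigr)$ is connected, injectivity on the union propagates by the standard overlapping-charts lemma. When connectedness first fails, one classifies the possible configurations of the intersections inside an affine chart of $\tilde\delta(V_{m+1})$ --- they are extremely rigid (parallel half-spaces bounded by a common degenerate hyperplane, or in the conformal case a single space-like/time-like pair). Two ingredients you do not invoke then close the argument: (i) relative compactness of every maximal chart in $\tilde M$, itself proved using your disjointness lemma applied \emph{inside} a single patch; and (ii) injectivity of $\tilde\delta$ on a slight \emph{thickening} $V^\epsilon \supset \bar V$ of any maximal chart. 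Thickening one of the charts in the bad configuration restores connectedness of the relevant intersection and forces $\tilde\delta$ to be injective onto all of $\tilde\X$ in one stroke. This configuration analysis and the thickening trick are where the real work is, and they do not reduce to a soft dynamical argument on the holonomy.
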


\subsection*{Plan of the article}

After reminding classic definitions of $\Ein^{p,q}$ and some properties of its stereographic projections in Section \ref{s:preliminaries}, we define in Section \ref{s:maximal_charts} maximal charts of $(G_{\X}, \X)$-manifolds and establish useful properties of these charts that will be used later in the proof of the injectivity of the developing map. Section \ref{s:atlas} is devoted to the proof of Proposition \ref{prop:atlas}. Theorem \ref{thm:atlas} is proved in Section \ref{s:injectivity}, which is easily reduced to the proof of the injectivity of the developing map $\D : \tilde{M} \rightarrow \tilde{\X}$ under the assumption of existence of maximal charts at any point. Finally, we give as announced in Section \ref{s:projective_flatness} the proof of projective flatness of $n$-dimensional manifolds $(M,[\nabla])$ admitting a non-trivial projective action of a cocompact lattice of rank $n$.

\subsection*{Convention and notations}

We will note $M$ a closed $n$-dimensional manifold, with $n \geq 2$. When $M$ is endowed with a conformal structure, we assume $n \geq 3$. For signatures $(p,q)$, with $p+q=n$, we fix the convention $p \leq q$. As in the main theorems, $G$ will always denote a non-compact simple Lie group with finite center and real-rank at least $2$, and $\Gamma$ a uniform lattice in $G$. 

\section{Stereographic projections and Minkowski patches of $\Ein^{p,q}$}
\label{s:preliminaries}

We remind the convention $p \leq q$ and the notation $n=p+q \geq 3$. We will quickly restrict to signatures such that $p \geq 2$. Let $(e_0,\ldots,e_{n+1})$ be a basis of $\R^{p+1,q+1}$ in which the quadratic form reads $Q(u) = 2u_0u_{n+1}+\cdots+2u_pu_{q+1}+u_{p+1}^2+\cdots+u_q^2$. By definition, $\Ein^{p,q} \subset \R P^{n+1}$ is the smooth quadric defined by $\{Q=0\}$, and its conformal structure is the one induced by the restriction of $Q$ to the tangent spaces of the isotropic cone $\{Q=0\}$. Its conformal group is then $\Conf(\Ein^{p,q}) = \PO(p+1,q+1)$.

We note $\S^{n+1}$ the standard Euclidean sphere in $\R^{p+1,q+1}$. The Einstein Universe $\Ein^{p,q}$ is doubly covered by $\{Q=0\} \cap \S^{n+1}$, which is diffeomorphic to $\S^p \times \S^q$. Thus, it is its universal cover whenever $p \geq 2$, and when $p = 1$, its universal cover is diffeomorphic to $\R \times \S^{n-1}$. We fix once and for all a universal cover $\pein : \tilde{\Ein}^{p,q} \rightarrow \Ein^{p,q}$. 

The following was initially observed by Liouville in Riemannian signature (see for instance Section 1 of \cite{cahen_kerbrat}).

\begin{theorem*}
Let $U,V \subset \Ein^{p,q}$ be two connected open subsets and $f : U \rightarrow V$ a conformal map. Then, there exists $\phi \in \Conf(\Ein^{p,q})$ such that $f = \phi|_U$.
\end{theorem*}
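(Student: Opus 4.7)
The plan is to use the canonical normal Cartan geometry of conformal structures in signature $(p,q)$ with $n \geq 3$, together with the rigidity of the Maurer-Cartan form. Set $G = \PO(p+1,q+1)$ and let $P < G$ be the stabilizer of an isotropic line, so that $\Ein^{p,q} = G/P$. A standard result from parabolic geometry (see \cite{cap_slovak}) is that every conformal manifold of signature $(p,q)$ carries a canonical normal Cartan geometry modeled on $(G,P)$, and this construction is natural with respect to conformal diffeomorphisms. On the flat model $\Ein^{p,q} = G/P$ itself, the Cartan bundle is $\pi : G \rightarrow G/P$ and the Cartan connection is the Maurer-Cartan form $\omega_G$.

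By naturality, a conformal map $f : U \rightarrow V$ between connected open subsets of $\Ein^{p,q}$ lifts uniquely to a principal $P$-bundle isomorphism $\hat f : \pi^{-1}(U) \rightarrow \pi^{-1}(V)$ satisfying $\hat f^* \omega_G = \omega_G$. So the statement reduces to the following rigidity property: every diffeomorphism $\Phi : \Omega_1 \rightarrow \Omega_2$ between connected open subsets of $G$ with $\Phi^* \omega_G = \omega_G$ is the restriction of a left translation $L_{g_0}$. To prove this, I would pick a base point $x_0 \in \Omega_1$, set $g_0 = \Phi(x_0) x_0^{-1}$, and consider $\Psi := L_{g_0}^{-1} \circ \Phi$, which fixes $x_0$ and still preserves $\omega_G$. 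Because $\omega_G$ identifies each tangent vector with its value as a left-invariant vector field at the identity, the condition $\Psi^* \omega_G = \omega_G$ forces $\Psi$ to send every left-invariant vector field to itself, hence to commute with its flow (right multiplication by a one-parameter subgroup). Consequently $\Psi$ fixes every point of the form $x_0 \exp(v)$ for $v$ in a neighborhood of $0 \in \g$, so $\Psi = \id$ in a neighborhood of $x_0$. The set of points where $\Psi = \id$ is therefore both open (by the same argument at each such point) and closed (by continuity), so $\Psi = \id$ on $\Omega_1$ by connectedness, whence $\Phi = L_{g_0}|_{\Omega_1}$.

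Applied to $\hat f$, this yields $\hat f = L_{g_0}|_{\pi^{-1}(U)}$ for some $g_0 \in G$, which descends to $f = \phi|_U$, where $\phi \in \Conf(\Ein^{p,q})$ is the action of $g_0$ on $G/P$. The main nontrivial input is the existence and naturality of the canonical normal Cartan geometry in conformal signature with $n \geq 3$, imported from the general theory of parabolic geometries; the Maurer-Cartan rigidity step is then a formal consequence of the fact that a Cartan connection determines an isomorphism from its value at a single point. Note that nothing in this argument depends on $\min(p,q) \geq 2$, so the theorem holds in all signatures with $n \geq 3$, even though only the non-Lorentzian case is used in the sequel.
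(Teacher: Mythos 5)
Your proof is correct. The paper itself does not prove this statement: it is quoted as the classical Liouville theorem and referred to \cite{cahen_kerbrat}, where the argument is carried out more concretely on the quadric model. Your route through the canonical normal Cartan geometry is the standard modern proof and all the steps are sound: the existence and naturality of the normal Cartan geometry for $n\geq 3$ (the genuinely nontrivial input, correctly identified and imported from \cite{cap_slovak}), the identification of the flat model's Cartan connection with the Maurer--Cartan form, and the rigidity lemma that a local diffeomorphism of $G$ preserving $\omega_G$ is a left translation --- your open--closed argument via flows of left-invariant vector fields is the standard one and works. The only point worth flagging is that $P$ (hence $\pi^{-1}(U)$) need not be connected, so the rigidity lemma a priori only gives $\hat f=L_{g_0}$ on one connected component of $\pi^{-1}(U)$; this is repaired immediately by $P$-equivariance of $\hat f$, since $\pi^{-1}(U)$ is the saturation of any of its components and left translations commute with the right $P$-action. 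Your closing remark is also accurate: the argument is signature-independent for $n\geq 3$, whereas the restriction $\min(p,q)\geq 2$ in the paper is imposed only later, for the global covering-space arguments.
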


\subsection{Minkowski patches and stereographic projections}
\label{s:preliminaries:minkowski_patches}

We recall here the pseudo-Riemannian generalization of the standard stereographic projection $\S^n \setminus \{x\} \rightarrow \R^n$. So, we now assume $p \geq 1$.

Let $v \in \R^{p+1,q+1}$ be an isotropic vector and $x = [v] \in \Ein^{p,q}$. The \textbf{Minkowski patch} $M_x$ associated to $x$ is the intersection of $\Ein^{p,q}$ with the affine chart domain $\{[v'] \ : \ B(v,v') \neq 0\}$ where $B(.,.)$ denotes the scalar product of $\R^{p+1,q+1}$. The \textbf{light-cone} $C_x$ of $x$ is the complement of $M_x$ in $\Ein^{p,q}$, \textit{i.e.} $C_x = \{[v'] \in \Ein^{p,q} \ : \ B(v,v') = 0\}$. We will say that $x$ is the vertex of $M_x$ and $C_x$.

The light-cone $C_x$ is a singular projective variety, with singularity $\{x\}$ and $C_x \setminus \{x\}$ is diffeomorphic to $\R \times \Ein^{p-1,q-1}$. The Minkowski patch $M_x$ is an open-dense subset of $\Ein^{p,q}$ conformally equivalent to $\R^{p,q}$. This last statement is easily observed in the coordinates defined above and with $x=o=[1:0:\cdots:0]$:
\begin{equation*}
M_o = \{[-\frac{<v,v>_{p,q}}{2}: v : 1], \ v \in \R^{p,q}\}.
\end{equation*}
Let us note $s_o : M_o \rightarrow \R^{p,q}$ the inverse of the map $v \in \R^{p,q} \mapsto [-\frac{<v,v>_{p,q}}{2}: v : 1] \in M_o$. From Liouville's Theorem, it follows:

\begin{lemma}
An open subset $U \subset \Ein^{p,q}$ conformally equivalent to $\R^{p,q}$ is a Minkowski patch.
\end{lemma}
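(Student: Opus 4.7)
The plan is to use Liouville's theorem (stated just above in the excerpt) to extend the given conformal equivalence to a global conformal transformation of $\Ein^{p,q}$, and then to use the fact that the conformal group permutes the Minkowski patches transitively.

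More precisely, suppose $f : \R^{p,q} \to U$ is a conformal equivalence. The inverse $s_o^{-1} : \R^{p,q} \to M_o$ of the stereographic projection described just above the lemma is itself a conformal diffeomorphism between $\R^{p,q}$ and the Minkowski patch $M_o$. Composing, I get a conformal diffeomorphism
\begin{equation*}
f \circ s_o : M_o \longrightarrow U
\end{equation*}
between two connected open subsets of $\Ein^{p,q}$. Liouville's theorem then produces some $\phi \in \Conf(\Ein^{p,q}) = \PO(p+1,q+1)$ such that $\phi|_{M_o} = f \circ s_o$. In particular, $\phi(M_o) = (f\circ s_o)(M_o) = U$.

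The final step is to observe that $\phi$ maps Minkowski patches to Minkowski patches. This is immediate from the definition: if $x = [v]$ and $\tilde\phi \in O(p+1,q+1)$ is a linear representative of $\phi$, then $B(v,v') \neq 0$ if and only if $B(\tilde\phi v, \tilde\phi v') \neq 0$, so $\phi(M_x) = M_{\phi(x)}$ for every $x \in \Ein^{p,q}$. Applying this to $x = o$, I conclude $U = \phi(M_o) = M_{\phi(o)}$, which is a Minkowski patch as required.

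There is no real obstacle beyond correctly applying Liouville's theorem; the only point meriting a word of care is checking that $f \circ s_o$ is being regarded as a map between open subsets of $\Ein^{p,q}$ (so Liouville is applicable), and that the extension $\phi$ really sends $M_o$ onto $U$ — not merely into it — which follows at once since $\phi|_{M_o} = f\circ s_o$ is already surjective onto $U$ by hypothesis.
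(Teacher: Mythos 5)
Your proof is correct and is exactly the argument the paper intends when it writes ``From Liouville's Theorem, it follows'' immediately after introducing $s_o$: compose the given equivalence with $s_o$, extend by Liouville to some $\phi \in \PO(p+1,q+1)$, and use that the conformal group permutes Minkowski patches. No issues.
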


\begin{definition}
We call \textit{stereographic projection} any conformal diffeomorphism $s : M_x \rightarrow \R^{p,q}$, where $M_x \subset \Ein^{p,q}$ is a Minkowski patch.
\end{definition}

It has to be noted that any stereographic projection $s : M_x \rightarrow \R^{p,q}$ can be uniquely written $s = s_o \circ \phi^{-1}$ where $\phi \in \Conf(\Ein^{p,q})$ is such that $\phi(o) = x$.

\subsubsection{Lifts to $\tilde{\Ein}^{p,q}$}

As any Minkowski patch $M_x \subset \Ein^{p,q}$ is simply connected, it is a trivializing open subset for the universal cover $\pein$, and we define a Minkowski patch in $\tilde{\Ein}^{p,q}$ as being any connected component $M_x'$ of $\pein^{-1}(M_x)$, where $M_x$ is a Minkowski patch in $\Ein^{p,q}$. Similarly:

\begin{lemma}
An open subset $U \subset \tilde{\Ein}^{p,q}$ which is conformal to $\R^{p,q}$ is a Minkowski patch.
\end{lemma}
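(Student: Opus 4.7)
The plan is to reduce this to the previous lemma (about Minkowski patches in $\Ein^{p,q}$) by proving that the restriction $\pein|_U$ of the covering projection is injective. Once injectivity is established, $\pein(U)$ is an open subset of $\Ein^{p,q}$ conformal to $\R^{p,q}$, hence a Minkowski patch $M_x$ by the previous lemma, and $U$ must coincide with one of the two connected components of $\pein^{-1}(M_x)$, which is by definition a Minkowski patch in $\tilde{\Ein}^{p,q}$.

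To establish injectivity, I fix a conformal diffeomorphism $\phi : \R^{p,q} \to U$ and form $g = \pein \circ \phi : \R^{p,q} \to \Ein^{p,q}$, which is a conformal local diffeomorphism. Identifying $\R^{p,q}$ with the Minkowski patch $M_o \subset \Ein^{p,q}$ via the standard stereographic projection $s_o$, I view $g \circ s_o^{-1} : M_o \to \Ein^{p,q}$ as a conformal local diffeomorphism defined on the connected open subset $M_o$ of $\Ein^{p,q}$.

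The main step is then to apply Liouville's theorem: picking a small open $W \subset M_o$ on which $g \circ s_o^{-1}$ is a genuine diffeomorphism onto its image, I obtain $\psi \in \Conf(\Ein^{p,q})$ extending this local restriction. By real-analyticity of conformal maps in pseudo-Riemannian signatures, $g \circ s_o^{-1}$ and $\psi|_{M_o}$ agree on the open set $W$ of the connected manifold $M_o$, hence coincide on all of $M_o$. Therefore $g = \psi \circ s_o$ globally on $\R^{p,q}$, so $g$ is injective with image $\psi(M_o) = M_x$, where $x = \psi(o)$. Consequently $\pein|_U$ is a conformal diffeomorphism onto $M_x$, and the reduction in the first paragraph concludes the proof.

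The only delicate point is the Liouville step: one must promote agreement of $g \circ s_o^{-1}$ with the locally extending $\psi$ on $W$ to agreement on all of $M_o$, which is where real-analyticity of the conformal structure of $\Ein^{p,q}$ is essential. Once this is granted, everything else is formal bookkeeping about the two-sheeted cover $\pein$.
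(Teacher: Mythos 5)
Your proof is correct and follows the route the paper intends (the paper omits the argument, simply asserting it ``similarly'' to the downstairs lemma, which it derives from Liouville's theorem). Your extra care in first restricting to a small $W$ where $g\circ s_o^{-1}$ is injective and then invoking unique continuation is exactly the right way to apply Liouville to a conformal immersion that is not a priori injective, and the covering-space bookkeeping identifying $U$ with a full connected component of $\pein^{-1}(M_x)$ is sound.
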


\begin{definition}
A stereographic projection in $\tilde{\Ein}^{p,q}$ is a conformal diffeomorphism $\tilde{s} : M_x' \rightarrow  \R^{p,q}$ where $M_x'$ is a Minkowski patch.
\end{definition}

Any such $\tilde{s}$ is of the form $s \circ \pein$, where $M_x = \pein(M_x')$ and $s : M_x \rightarrow \R^{p,q}$ is a stereographic projection.

\subsection{Intersections of Minkowski patches}
\label{ss:preliminaries:intersection_minkowski_patches}

\subsubsection{Intersections in $\Ein^{p,q}$}

Let $M_x \subset \Ein^{p,q}$ be a Minkowski patch and $s : M_x \rightarrow \R^{p,q}$ a stereographic projection. Let $\mathcal{C} \subset \R^{p,q}$ denote the isotropic cone. Let $M_y \subset \Ein^{p,q}$ be another Minkowski patch, with $y \neq x$. There are two possible types for $M_x \cap M_y$:
\begin{itemize}
\item either $y \in M_x$, and in this situation $s(M_x \cap M_y) = \R^{p,q} \setminus (s(y) + \mathcal{C})$
\item or $y \notin M_x$, and $s(M_x \cap M_y) = \R^{p,q} \setminus H_y$, where $H_y \subset \R^{p,q}$ is a degenerate affine hyperplane (of course, $H_y$ depends on $s$).
\end{itemize}
It has to be noted that when $p \geq 2$, $M_x \cap M_y$ always has two connected components, whereas in Lorentzian signature, $M_x \cap M_y$ has three connected components in the first case.

\begin{lemma}
\label{lem:trois_intersection_einpq}
Let $M_x,M_y,M_z$ be three Minkowski patches in $\Ein^{p,q}$. If $M_x \cap M_z = M_y \cap M_z$, then $M_x=M_y$.
\end{lemma}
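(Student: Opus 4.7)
The plan is to pass to the complements $C_x = \Ein^{p,q} \setminus M_x$ (the light-cones) and argue purely by dimension. Since each Minkowski patch is the complement of its light-cone, the hypothesis $M_x \cap M_z = M_y \cap M_z$ translates immediately into
\begin{equation*}
C_x \cup C_z = C_y \cup C_z,
\end{equation*}
and intersecting both sides with $C_x$ gives
\begin{equation*}
C_x = (C_x \cap C_y) \cup (C_x \cap C_z).
\end{equation*}

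The core step is the dimension calculation for $C_x \cap C_y$ when $x \neq y$. Writing $x = [v]$, $y = [w]$ with $v,w \in \R^{p+1,q+1}$ isotropic and non-collinear, one has
\begin{equation*}
C_x \cap C_y = \{[u] \in \Ein^{p,q} : B(v,u) = B(w,u) = 0\},
\end{equation*}
i.e. the intersection of $\Ein^{p,q}$ with the projectivization of the $n$-dimensional subspace $v^{\perp} \cap w^{\perp}$. Because $n > p+1$ exceeds the maximal dimension of a totally isotropic subspace of $\R^{p+1,q+1}$ (this is exactly where the assumption $\min(p,q) \geq 2$ is used), the restriction of $Q$ to $v^{\perp} \cap w^{\perp}$ is not identically zero, so $C_x \cap C_y$ is a proper quadric hypersurface inside a projective space of dimension $n-1$, and therefore has dimension exactly $n-2$. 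In contrast, the description $C_x \setminus \{x\} \simeq \R \times \Ein^{p-1,q-1}$ from Section \ref{s:preliminaries} shows that $\dim C_x = n-1$.

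I would then conclude by a short case analysis. If $x$, $y$, $z$ are pairwise distinct, then $C_x$ is displayed as a union of two closed subsets of dimension at most $n-2$, which is impossible because $C_x \setminus \{x\}$ is a smooth connected $(n-1)$-manifold and cannot be covered by two algebraic subsets of strictly smaller dimension. If $x = z$, then $M_x = M_z$ and the hypothesis reduces to $M_x \subset M_y$, equivalently $C_y \subset C_x$, which gives $C_y = C_x \cap C_y$; combined with $\dim C_y = n-1$, this is again incompatible with $\dim(C_x \cap C_y) = n-2$ unless $y = x$. The case $y = z$ is symmetric. In every case one concludes $x = y$ and hence $M_x = M_y$.

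The one place where a nontrivial check is required is the linear-algebra step showing that $Q|_{v^{\perp} \cap w^{\perp}}$ is not identically zero, and therefore that $C_x \cap C_y$ does not accidentally coincide with the full hyperplane section. Once the dimension count $\dim(C_x \cap C_y) = n-2 < n-1 = \dim C_x$ is secured, the rest of the argument is a straightforward dimension comparison.
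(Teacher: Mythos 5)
Your argument is correct, but it takes a genuinely different route from the paper's. The paper works inside a stereographic chart of $M_z$: it uses the explicit classification of $s(M_x \cap M_z)$ (complement of a translated light-cone when $x \in M_z$, complement of a degenerate affine hyperplane when $x \notin M_z$), notes that in the first case $x$ is the singular vertex of that cone, and in the second case recovers $x$ as the limit $s^{-1}(v+tv_0) \to x$ along affine isotropic lines contained in the complement of $s(M_x\cap M_z)$; in both cases $M_x\cap M_z$ determines $x$ directly. You instead pass to the light-cones and run a dimension count: $C_x = (C_x\cap C_y)\cup(C_x\cap C_z)$, with $\dim(C_x\cap C_y)\leq n-2$ whenever $x\neq y$, because $Q$ cannot vanish identically on the $n$-dimensional subspace $v^{\perp}\cap w^{\perp}$ (maximal totally isotropic subspaces of $\R^{p+1,q+1}$ have dimension $p+1<n$), against $\dim C_x = n-1$. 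This is arguably cleaner: it avoids the case distinction on the position of $x$ relative to $M_z$ and the limiting argument, and it makes visible that the lemma holds in every signature with $q\geq 2$. On that last point, your parenthetical that the linear-algebra step is ``exactly where $\min(p,q)\geq 2$ is used'' is slightly off: the inequality $n>p+1$ only requires $q\geq 2$, which is automatic from $n\geq 3$ and $p\leq q$, so your proof (like the paper's) also covers the Lorentzian case. Two cosmetic remarks: ``dimension exactly $n-2$'' should be ``dimension at most $n-2$'' (the quadric could a priori be empty or degenerate, and only the upper bound is needed), and the covering contradiction is most safely phrased via the irreducibility of the quadric $C_x$ (its defining form has rank $n\geq 3$) or via Baire category on $C_x\setminus\{x\}$ — either works.
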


\begin{proof}
If $M_x \cap M_z = M_z$, then $M_x=M_z=M_y$ by Liouville's theorem. So, let us assume that it is not the case. Then, by the previous paragraph, $x \in M_z$ if and only if $y \in M_z$, and in this case $x=y$ because they are sent by any stereographic projection of $M_z$ to the singularity of a same light-cone.

In the other case, it is enough to observe - in suitable homogeneous coordinates - that when $x \notin M_z$, given a stereographic projection $s : M_z \rightarrow \R^{p,q}$, if $\Delta = v + \R.v_0$ is any affine isotropic line contained in the complement of $s(M_x \cap M_z)$, then $s^{-1}(v+tv_0) \rightarrow x$ as $t \to \pm \infty$. This shows that the data of $M_x \cap M_z$ determines $x$ in this situation, and the lemma is proved.
\end{proof}

\subsubsection{Intersections in $\tilde{\Ein}^{p,q}$}
\label{sss:intersection_einpqtilde}

Let $M_1 \subset \tilde{\Ein}^{p,q}$ be a Minkowski patch, and let $s : M_1 \rightarrow \R^{p,q}$ be a stereographic projection. Let $M_2 \subset \tilde{\Ein}^{p,q}$ be another Minkowski patch such that $M_1 \cap M_2 \neq 0$. We note $\bar{s} : \pein(M_1) \rightarrow \R^{p,q}$ the stereographic projection such that $s = \bar{s} \circ \pein$.

\begin{lemma}
$\pein(M_1 \cap M_2)$ is a connected component of $\pein(M_1) \cap \pein(M_2)$.
\end{lemma}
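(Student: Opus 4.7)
The approach is to use the two-sheeted cover $\pein$ to decompose $\pein(M_1) \cap \pein(M_2)$. Let $\sigma$ denote the non-trivial deck transformation; it is a fixed-point-free involution of $\tilde{\Ein}^{p,q}$. Since each Minkowski patch $M_i$ is simply connected, the restriction $\pein|_{M_i}$ is a homeomorphism onto $\pein(M_i)$, whence $\pein^{-1}(\pein(M_i)) = M_i \sqcup \sigma M_i$ (with the two lifts disjoint because $\pein|_{M_i}$ is injective and $\sigma$ has no fixed points). Expanding the preimage of $\pein(M_1) \cap \pein(M_2)$ and pushing forward by $\pein$, I get
\[
\pein(M_1) \cap \pein(M_2) = \pein(M_1 \cap M_2) \sqcup \pein(M_1 \cap \sigma M_2),
\]
the disjointness following from injectivity of $\pein|_{M_1}$ together with $M_2 \cap \sigma M_2 = \emptyset$. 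Each piece is open in $\pein(M_1) \cap \pein(M_2)$, so $\pein(M_1 \cap M_2)$ is automatically a (non-empty) union of connected components.

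It remains to check that it is a \emph{single} component. If $\pein(M_1) = \pein(M_2)$ then the intersection is connected and $M_1 \cap M_2 \neq \emptyset$ already forces $M_1 = M_2$, so there is nothing to prove. Otherwise, the description in Subsection~\ref{ss:preliminaries:intersection_minkowski_patches} (where the hypothesis $p \geq 2$ is crucial) shows that $\pein(M_1) \cap \pein(M_2)$ has exactly two connected components, and the task reduces to verifying $M_1 \cap \sigma M_2 \neq \emptyset$.

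I would argue this by contradiction. Suppose $M_1 \cap \sigma M_2 = \emptyset$, so that $M_1 \subseteq M_2 \sqcup \pein^{-1}(C_{x_2})$. The hypersurface $\pein^{-1}(C_{x_2})$ is the zero set in $\tilde{\Ein}^{p,q}$ of a linear form (a lift of $B(v_2, \cdot\,)$); its singular locus is reduced to the two points of $\pein^{-1}(x_2)$, and at every smooth point it locally separates $\tilde{\Ein}^{p,q}$ into its two open complementary sides $M_2$ and $\sigma M_2$. Since $M_1$ is open, any point of $M_1 \cap \pein^{-1}(C_{x_2})$ lies in a neighborhood inside $M_1$ which meets $\sigma M_2$ (even at a singular point, because a neighborhood of it contains smooth points of the hypersurface), contradicting the assumption. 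Hence $M_1 \cap \pein^{-1}(C_{x_2}) = \emptyset$, so $M_1 \subseteq M_2 \sqcup \sigma M_2$; connectedness of $M_1$ together with $M_1 \cap M_2 \neq \emptyset$ forces $M_1 \subseteq M_2$. But two distinct light cones in $\Ein^{p,q}$ intersect in codimension at least two, so the inclusion $C_{x_2} \subseteq C_{x_1}$ forces $C_{x_1} = C_{x_2}$, giving $\pein(M_1) = \pein(M_2)$ and contradicting our case assumption. The main delicate step is the local separation argument, which rests on the explicit quadric structure of the light cone around its isolated singular vertex.
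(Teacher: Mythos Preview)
Your proof is correct and takes a route genuinely different from the paper's. The paper establishes that $\pein(M_1\cap M_2)$ is clopen in $\pein(M_1)\cap\pein(M_2)$ abstractly: it shows closedness by lifting a point of the complement into $M_1$ and applying Lemma~\ref{lem:assiettes_emboitees} to a small connected neighborhood mapping into $\pein(M_2)$. To see that the set is a \emph{single} component, the paper observes that if it were all of $\pein(M_1)\cap\pein(M_2)$ then $\pein|_{M_1\cup M_2}$ would be injective, which is impossible (implicitly because $\pein(M_1)\cup\pein(M_2)$ is the complement of a codimension~$\geq 2$ set in $\Ein^{p,q}$ and hence still carries the nontrivial $\Z/2$ cover). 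You instead exploit the deck transformation directly to write $\pein(M_1)\cap\pein(M_2)=\pein(M_1\cap M_2)\sqcup\pein(M_1\cap\sigma M_2)$, which gives the clopen step for free, and then exclude $M_1\cap\sigma M_2=\emptyset$ by a local-separation argument along the lifted light cone, forcing $M_1\subset M_2$ and hence $\pein(M_1)=\pein(M_2)$. Your version is more explicit about the quadric geometry, while the paper's is shorter but leans on the general local-homeomorphism lemma and an unstated covering fact. One small refinement: your handling of the singular vertices is correct but terse---near a preimage of $x_2$ the linear form $B(v_2,\cdot)$ has a Morse critical point on $\tilde\Ein^{p,q}$ whose two local sublevel sets are exactly $M_2$ and $\sigma M_2$, so any open neighborhood meets both; stating this directly is cleaner than reducing to nearby smooth points.
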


\begin{proof}
Even though this lemma is valid for $\Ein^{1,n-1}$, we only give a proof for non-Lorentzian signatures $\min(p,q) >1$ which is the case discussed in this article. We suppose $M_1 \neq M_2$, otherwise the statement is obvious.

Let us show that $\pein(M_1 \cap M_2)$ is closed in $\pein(M_1) \cap \pein(M_2)$. Let $\bar{x} \in (\pein(M_1) \cap \pein(M_2)) \setminus \pein(M_1 \cap M_2)$. Let $x \in M_1$ be such that $\pein(x) = \bar{x}$. By definition, $x \notin M_2$. Let $U_x \subset M_1$ be a connected neighborhood of $x$ in restriction to which $\pein$ is injective and such that $\pein(U_x) \subset \pein(M_2)$.
Therefore, $U_x \cap M_2 = \emptyset$ because if not, Lemma \ref{lem:assiettes_emboitees} would imply $U_x \subset M_2$, contradicting $x \notin M_2$. By construction, $\pein(U_x) \cap \pein(M_1 \cap M_2) = \emptyset$ and we get as announced that $\pein(M_1 \cap M_2)$ is closed in $\pein(M_1) \cap \pein(M_2)$. Since it is also open, we get that $\pein(M_1 \cap M_2)$ is a union of connected components of $\pein(M_1) \cap \pein(M_2)$. 

Because we assume $\min(p,q) \geq 2$, as observed above, $\pein(M_1) \cap \pein(M_2)$ has two connected components. And since we cannot have $\pein(M_1 \cap M_2) = \pein(M_1) \cap \pein(M_2)$ (otherwise $\pein|_{M_1 \cup M_2}$ would be injective), we get that $\pein(M_1 \cap M_2)$ must be a single connected component.
\end{proof}

Thus, we deduce the following useful observation.

\begin{observation}
\label{obs:intersection}
When $p = \min(p,q) \geq 2$, given two distinct, non-antipodal Minkowski patches $M_1,M_2$ of $\tilde{\Ein}^{p,q}$ and a stereographic projection $s : M_1 \rightarrow \R^{p,q}$, $s(M_1 \cap M_2)$ is an open set of the form
\begin{itemize}
\item $v_0 + U_S$, $v_0 \in \R^{p,q}$
\item or $v_0 + U_T$, $v_0 \in \R^{p,q}$
\item or $\{v \in \R^{p,q} \ : \ b(v,v_0) > \alpha\}$, with $v_0 \in \mathcal{C} \setminus \{0\}$ and $\alpha \in \R$,
\end{itemize}
where we note $b(v,w) = -v_1w_1 - \cdots - v_pw_p + v_{p+1}w_{p+1} + \cdots + v_nw_n$, $q(v) = b(v,v)$, $\mathcal{C} = \{q = 0\}$, $U_S = \{q > 0\}$, $U_T = \{q<0\}$. For $v_0 \in \mathcal{C} \setminus \{0\}$ and $\alpha \in \R$, we will note $H_{v_0,\alpha} = \{v \in \R^{p,q} \ : \ b(v_0,v) > \alpha\}$.
\end{observation}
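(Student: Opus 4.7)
The plan is to reduce the statement to the already-recorded classification of intersections of Minkowski patches in $\Ein^{p,q}$ from Section \ref{ss:preliminaries:intersection_minkowski_patches}. Let $\bar{s} : \pein(M_1) \to \R^{p,q}$ be the stereographic projection satisfying $s = \bar{s} \circ \pein$. By the preceding lemma, $\pein(M_1 \cap M_2)$ is a connected component of $\pein(M_1) \cap \pein(M_2)$, so $s(M_1 \cap M_2) = \bar{s}(\pein(M_1 \cap M_2))$ is a connected component of $\bar{s}(\pein(M_1) \cap \pein(M_2))$. The assumption that $M_1$ and $M_2$ are distinct and non-antipodal ensures $\pein(M_1) \neq \pein(M_2)$, which rules out the degenerate case where $\bar{s}(\pein(M_1) \cap \pein(M_2))$ would be all of $\R^{p,q}$ (and hence not of any of the three listed shapes).

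Next I would apply the dichotomy from Section \ref{ss:preliminaries:intersection_minkowski_patches}. Let $y$ denote the vertex of $\pein(M_2)$. If $y \in \pein(M_1)$, the analysis there gives $\bar{s}(\pein(M_1) \cap \pein(M_2)) = \R^{p,q} \setminus (v_0 + \mathcal{C})$ with $v_0 = \bar{s}(y)$. Since $\min(p,q) \geq 2$, the complement of a translated isotropic cone has exactly two connected components, namely $v_0 + U_S$ and $v_0 + U_T$, and $s(M_1 \cap M_2)$ must coincide with one of them, producing the first two forms in the list. Otherwise $y \notin \pein(M_1)$, and Section \ref{ss:preliminaries:intersection_minkowski_patches} gives $\bar{s}(\pein(M_1) \cap \pein(M_2)) = \R^{p,q} \setminus H$ where $H$ is a degenerate affine hyperplane. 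Such an $H$ can be written as $\{v : b(v, v_0) = \alpha\}$ with $v_0 \in \mathcal{C} \setminus \{0\}$ (isotropy of the normal vector encoding degeneracy) and $\alpha \in \R$; its complement has two components $\{v : b(v, \pm v_0) > \pm \alpha\}$, each of the third form after possibly flipping the sign of $v_0$.

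The main subtle point is the connected component count in Case~1: justifying that $\R^{p,q} \setminus (v_0 + \mathcal{C})$ has exactly the two components $v_0 + U_S$ and $v_0 + U_T$. This rests on the connectedness of $U_S = \{q>0\}$ and $U_T = \{q<0\}$, which holds precisely in the non-Lorentzian regime $\min(p,q) \geq 2$; in Lorentzian signature $U_T$ would split into future and past components and a fourth shape would enter the list. Apart from this signature check, the argument is simply a transport through $\bar{s}$ of the already-known intersection data combined with the preceding lemma.
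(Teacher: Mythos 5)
Your proposal is correct and follows the same route the paper intends: the observation is deduced by combining the preceding lemma (that $\pein(M_1\cap M_2)$ is a single connected component of $\pein(M_1)\cap\pein(M_2)$) with the two-case classification of intersections in $\Ein^{p,q}$ from Section \ref{ss:preliminaries:intersection_minkowski_patches}, together with the count of connected components valid precisely when $\min(p,q)\geq 2$. Your explicit attention to why the non-antipodal hypothesis excludes the degenerate case, and to the component count for $\R^{p,q}\setminus(v_0+\mathcal{C})$, fills in exactly the details the paper leaves implicit.
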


\begin{definition}
\label{def:intersection_type}
An open subset $U \subset \R^{p,q}$ of the form $v_0 + U_S$, $v_0 + U_T$ for any $v_0 \in \R^{p,q}$ or $H_{v_0,\alpha}$ for $v_0 \in \mathcal{C} \setminus \{0\}$ and $\alpha \in \R$ is said to be of \textbf{intersection type}.
\end{definition}

We will also use the fact that a Minkowski patch is determined by its intersection with another one.

\begin{lemma}
\label{lem:same_intersection}
Let $M_1,M_2 \subset \tilde{\Ein}^{p,q}$ be two Minkowski patches. Then, $\pein(M_1 \cap \iota (M_2))$ is the complement of $\pein(M_1 \cap M_2)$ in $\pein(M_1) \cap \pein(M_2)$. If $M_1,M_2,M_3 \subset \tilde{\Ein}^{p,q}$ are three Minkowski patches and if $M_1 \cap M_3 = M_2 \cap M_3$, then $M_1 = M_2$.
\end{lemma}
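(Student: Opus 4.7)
For the first assertion I would establish coverage and disjointness using only that $\pein$ is injective on each Minkowski patch and $\iota$ is fixed-point-free (so $M \cap \iota(M) = \emptyset$). Any $y \in \pein(M_1) \cap \pein(M_2)$ has lifts $z_1 \in M_1, z_2 \in M_2$ over it; either $z_1 = z_2 \in M_1 \cap M_2$, or $z_2 = \iota(z_1)$, placing $z_1 \in M_1 \cap \iota(M_2)$. Conversely, a shared point of $\pein(M_1 \cap M_2)$ and $\pein(M_1 \cap \iota(M_2))$ would yield $z \in M_1 \cap M_2$ and $z' \in M_1 \cap \iota(M_2)$ with $\pein(z) = \pein(z')$; the case $z = z'$ places $z \in M_2 \cap \iota(M_2) = \emptyset$, and the case $z' = \iota(z)$ places $z \in M_1 \cap \iota(M_1) = \emptyset$, a contradiction in either case.

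The second assertion reduces to the main case where $V := \pein(M_1 \cap M_3) = \pein(M_2 \cap M_3)$ is nonempty and both $M_1, M_2$ are distinct from $M_3$. The degenerate sub-cases $M_i = M_3$ are immediate given incomparability of Minkowski patches, and the sub-case $M_i \cap M_3 = \emptyset$ follows quickly from the first assertion by applying the preceding lemma to the pair $(M_1, \iota(M_3))$ — this forces $\pein(M_1) \cap \pein(M_3)$ to be a single connected component of itself, hence connected, so that $\pein(M_1) = \pein(M_3)$ and thus $M_3 = \iota(M_1)$; symmetrically $M_3 = \iota(M_2)$, so $M_1 = M_2$. In the main case, the preceding lemma identifies $V$ as a single connected component both of $\pein(M_1) \cap \pein(M_3)$ and of $\pein(M_2) \cap \pein(M_3)$.

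The crux is to show this common component forces $\pein(M_1) = \pein(M_2)$. Fix a stereographic projection $s : M_3 \to \R^{p,q}$; by Observation \ref{obs:intersection}, the set $s(M_1 \cap M_3) = s(M_2 \cap M_3)$ has one of the three shapes $v_0 + U_S$, $v_0 + U_T$, or a half-space $H_{v_0, \alpha}$. These shapes are topologically distinguishable by their boundaries — a translated light cone with a distinguished singular apex in the first two cases, a flat degenerate hyperplane in the third — so $\pein(M_1)$ and $\pein(M_2)$ necessarily fall into the same case. In the cone cases the apex $v_0$ is the $s$-image of the common vertex of $\pein(M_i)$ lying in $\pein(M_3)$; in the half-space case the bounding hyperplane is recovered, and running along an affine isotropic line to either infinity, as in the proof of Lemma \ref{lem:trois_intersection_einpq}, identifies the common vertex outside $\pein(M_3)$. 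Either way $\pein(M_1) = \pein(M_2)$, so $M_2 \in \{M_1, \iota(M_1)\}$, and the nonempty inclusion $M_1 \cap M_3 \subseteq M_1 \cap M_2$ excludes $M_2 = \iota(M_1)$.

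The main obstacle is this final extraction of the vertex from a single connected component of the intersection, which relies essentially on $\min(p,q) \geq 2$: this hypothesis is exactly what makes $U_S$ and $U_T$ connected, caps $\pein(M_i) \cap \pein(M_3)$ at two connected components, and renders the three shapes of Observation \ref{obs:intersection} both exhaustive and pairwise distinguishable.
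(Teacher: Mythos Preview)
Your proof is correct and arrives at the same conclusion as the paper, but the two arguments distribute the work differently.

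For the first assertion, you give a direct set-theoretic argument using only that $\pein$ is a double cover with deck transformation $\iota$ and that $\iota$ is fixed-point-free. The paper instead invokes the preceding lemma (that $\pein(M_1 \cap \iota(M_2))$ is a single connected component of $\pein(M_1)\cap\pein(M_2)$) together with injectivity of $\pein|_{M_1}$. Your route is more elementary and does not require $\min(p,q)\geq 2$ at this stage.

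For the second assertion, the paper is slightly more economical: rather than splitting into cone and half-space shapes to recover the vertex, it observes in one stroke that the \emph{boundary} of the shared component $\pein(M_1\cap M_3)$ inside $\pein(M_3)$ is exactly the complement of $\pein(M_i)\cap\pein(M_3)$ in $\pein(M_3)$ (this holds uniformly for all three intersection types). Hence $\pein(M_1)\cap\pein(M_3)=\pein(M_2)\cap\pein(M_3)$ outright, and Lemma~\ref{lem:trois_intersection_einpq} finishes. Your case analysis recovers the same information by locating the vertex separately in each situation; the paper's boundary observation packages the geometry more compactly. On the other hand, your handling of the degenerate case $M_i\cap M_3=\emptyset$ (forcing $M_3=\iota(M_i)$) is spelled out more carefully than in the paper, which simply declares this case ``clear''.
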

                                      
\begin{proof}         
For the first claim, we may assume $M_1 \neq M_2$ and $M_1 \neq \iota(M_2)$, so that $\pein(M_1)\cap\pein(M_2)$ has two connected components. We have seen that $\pein(M_1 \cap \iota(M_2))$ is one of these components, and by injectivity of $\pein|_{M_1}$, it must be the complement of $\pein(M_1\cap M_2)$.
     												                                                                                                  
Let us assume $M_1 \cap M_3 = M_2 \cap M_3$. The conclusion is clear when these intersections are empty, or equal to all of $M_3$. Let us assume that it is not the case. Then, it follows that $\pein(M_1) \cap \pein(M_3)$ and $\pein(M_2) \cap \pein(M_3)$ have a common connected component, namely $\pein(M_1 \cap M_3)$. In all cases, the boundary of this component in $\pein(M_3)$ is the complement of $\pein(M_1) \cap \pein(M_3)$ in $\pein(M_3)$, as well as the complement of $\pein(M_2) \cap \pein(M_3)$ in $\pein(M_3)$. This proves $\pein(M_1) \cap \pein(M_3) = \pein(M_2) \cap \pein(M_3)$. By Lemma \ref{lem:trois_intersection_einpq}, it follows that $\pein(M_1) = \pein(M_2)$. 

So, either $M_1 = M_2$ or $M_1 = \iota (M_2)$. But the last case is not possible because $M_1 \cap M_3 = M_2 \cap M_3$, and it concludes the proof.
\end{proof}

\begin{remark}
\label{rem:trace_sur_lantipodal}
Let $M_1,M_2 \subset \tilde{\Ein}^{p,q}$ be two distinct and non-antipodal Minkwoski patches. Let $s : M_1 \rightarrow \R^{p,q}$ be a stereographic projection, and let $s' = s \circ \iota : \iota(M_1) \rightarrow \R^{p,q}$. Since $s'(\iota(M_1) \cap M_2) = s(M_1 \cap \iota(M_2))$, it follows from the previous lemma that
\begin{itemize}
\item if $s(M_1 \cap M_2) = v+U_S$, then $s'(\iota(M_1) \cap M_2) = v+U_T$, and
\item if $s(M_1 \cap M_2) = H_{v,\alpha}$, then $s'(\iota(M_1) \cap M_2) = H_{-v,-\alpha}$.
\end{itemize}
\end{remark}

\section{Maximal charts on $\tilde{M}$}
\label{s:maximal_charts}

Let $\X$ denote either $\R P^n$ or $\Ein^{p,q}$, with $\min(p,q) \geq 2$, and $G_{\X}$ its automorphisms group. Let $M$ be a compact manifold endowed with a $(G_{\X},\X)$-structure. We fix $\pi : \tilde{M} \rightarrow M$ a universal cover and we pull back the geometric structure of $M$ to $\tilde{M}$. We note $\pi_{\X} : \tilde{\X} \rightarrow \X$ the natural covering.

We choose $(\D,\tilde{\rho})$ a developing pair modeled on $\tilde{\X}$, \textit{i.e.} a (projective or conformal) immersion $\D : \tilde{M} \rightarrow \tilde{\X}$ and a homomorphism $\tilde{\rho} : \Aut(\tilde{M}) \rightarrow \Aut(\tilde{\X})$ such that $\D$ is $\tilde{\rho}$-equivariant. We note $D = \pein \circ \D : \tilde{M} \rightarrow \X$ and $\rho : \Aut(\tilde{M}) \rightarrow \Aut(\X)$ the natural developing pair with model $\X$ associated to $(\D,\tilde{\rho})$. The homomorphism $\rho$ is $\tilde{\rho}$ followed by the natural projection $\Aut(\tilde{\X}) \rightarrow \Aut(\X)$.

\subsection{Definition of maximal charts and classic lemmas}
\label{ss:maximal_charts:definition}

\begin{definition}
We call \textbf{maximal chart} an open subset $V \subset \tilde{M}$ in restriction to which $\pi$ and $\D$ are injective and such that $\D(V) \subset \S^n$ is an hemisphere if $\X = \R P^n$ or $\D(V) \subset \tilde{\Ein}^{p,q}$ is a Minkowski patch if $\X = \Ein^{p,q}$.
\end{definition}

\begin{remark}
Equivalently, a maximal chart is the same as an open set $U \subset M$ which is either projectively equivalent to $\R^n$ or conformally equivalent to $\R^{p,q}$. Any connected component $V$ of $\pi^{-1}(U)$ will be a maximal chart for the previous definition.
\end{remark}

We will use repeatedly the following classic results about local homeomorphisms. They are stated and proved in \cite{barbot}, Section 2.1. Let $M,N$ be two manifolds and $f : M \rightarrow N$ a local homeomorphism.

\begin{lemma}
\label{lem:assiettes_emboitees}
Let $U \subset M$ and $V \subset N$ be two open sets such that $f|_U$ is a homeomorphism onto $V$. If $W \subset M$ is a connected open subset such that $f(W) \subset V$ and $W \cap U \neq \emptyset$, then $W \subset U$.
\end{lemma}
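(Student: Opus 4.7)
The plan is to argue by connectedness of $W$. Set $A = W \cap U$. By hypothesis $A$ is nonempty, and it is open in $W$ because $U$ is open in $M$. If I can also show that $A$ is closed in $W$, then $A = W$ by connectedness, which is exactly the desired conclusion $W \subset U$.

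To establish closedness, I will take a point $x \in W$ lying in the closure of $A$ in $W$, pick a sequence $(x_n) \subset A$ with $x_n \to x$, and prove $x \in U$. The two inputs are the global hypothesis on $f|_U$ and the local injectivity of $f$. On the one hand, since $f(W) \subset V$ and $f$ is continuous, $f(x_n) \to f(x)$ in $V$; because $f|_U \colon U \to V$ is a homeomorphism, the sequence $(f|_U)^{-1}(f(x_n))$ converges in $U$ to some point $x' := (f|_U)^{-1}(f(x))$. But for each $n$ we have $x_n \in U$, so $(f|_U)^{-1}(f(x_n)) = x_n$; thus $x_n \to x'$ in $U$, and a fortiori in $M$. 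On the other hand, $x_n \to x$ in $M$ by construction. Since manifolds are Hausdorff, limits are unique, so $x = x' \in U$, and therefore $x \in A$.

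The only subtle point is making sure the candidate preimage $x'$ provided by the global homeomorphism $f|_U$ really coincides with the limit $x$ in $M$, rather than being a different sheet of $f$ above $f(x)$. This is exactly where local injectivity of $f$ would typically intervene, but in the argument above it is handled more cheaply: because each $x_n$ already belongs to $U$, the two sequences in $M$ one would otherwise have to compare are literally the same sequence, so Hausdorffness alone forces $x = x'$. Consequently no further appeal to a local injectivity neighborhood of $x$ is needed, and the proof is complete.
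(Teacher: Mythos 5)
Your argument is correct. The paper itself does not prove this lemma (it defers to Barbot, Section 2.1), but your proof is the standard one: $W\cap U$ is open and nonempty in the connected set $W$, and your sequence argument for closedness is sound --- indeed, as you note, it uses only continuity of $f$ and the hypothesis on $f|_U$, not local injectivity. A marginally slicker way to phrase the closedness step, avoiding sequences, is to observe that $W\cap U$ is exactly the coincidence set $\{x\in W : (f|_U)^{-1}(f(x))=x\}$ of two continuous maps $W\to M$ with Hausdorff target, hence closed in $W$; this is the same idea in disguise.
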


\begin{definition}
A subset $X \subset M$ of a manifold $M$ is said to be \textbf{locally connected relatively to $M$} if any point $x \in \bar{X} $ has a fundamental system of neighborhoods $\mathcal{V}_x$ such that for all $V \in \mathcal{V}_x$, $V \cap X$ is connected.
\end{definition}

Typically, an affine chart domain is not locally connected relatively to $\R P^n$, whereas a hemisphere is locally connected relatively to $\S^n$.

\begin{lemma}
\label{lem:fermeture}
Let $U \subset M$ be an open subset in restriction to which $f$ is injective. If $f(U)$ is locally connected relatively to $N$, then $f$ is injective in restriction to $\bar{U}$.
\end{lemma}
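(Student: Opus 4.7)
I would argue by contradiction. Suppose there exist two distinct points $x_1, x_2 \in \bar{U}$ with $f(x_1) = f(x_2) =: y$. First, since $f$ is a local homeomorphism, I would choose disjoint open neighborhoods $W_1 \ni x_1$ and $W_2 \ni x_2$ such that each $f|_{W_i}$ is a homeomorphism onto the open set $f(W_i)$; in particular, $f(W_1) \cap f(W_2)$ is an open neighborhood of $y$. Since $x_1 \in \bar{U}$, continuity gives $y \in \overline{f(U)}$, so the hypothesis that $f(U)$ is locally connected relatively to $N$ furnishes an arbitrarily small connected open neighborhood $V$ of $y$, which I may take inside $f(W_1) \cap f(W_2)$, such that $V \cap f(U)$ is connected.

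Next, set $A_i := (f|_{W_i})^{-1}(V)$, so that $A_1, A_2$ are disjoint open neighborhoods of $x_1, x_2$ on which $f|_{A_i} : A_i \to V$ is a homeomorphism. Since $f|_U$ is an injective local homeomorphism, it is a homeomorphism from $U$ onto the open subset $f(U) \subset N$; consequently,
\[
W := U \cap f^{-1}(V) = (f|_U)^{-1}(V \cap f(U))
\]
is an open, connected subset of $M$, nonempty because $y \in \overline{f(U)}$ and $V$ is a neighborhood of $y$. Because $x_i \in \bar{U}$ and $A_i$ is a neighborhood of $x_i$, the set $A_i \cap U$ is nonempty, and the inclusion $A_i \subset f^{-1}(V)$ gives $A_i \cap U \subset W$, so $W \cap A_i \neq \emptyset$ for $i = 1, 2$.

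To finish, I would apply Lemma \ref{lem:assiettes_emboitees} to the chart $f|_{A_i} : A_i \to V$ and to the connected set $W$, whose image lies in $V$ and which meets $A_i$: this forces $W \subset A_i$ for both $i$, contradicting the disjointness of $A_1$ and $A_2$ together with $W \neq \emptyset$.

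The main obstacle, to my mind, is the calibration of $V$: one must arrange \emph{simultaneously} that $V \cap f(U)$ is connected (which is exactly where the local-connectedness-relative-to-$N$ hypothesis at the boundary point $y$ enters) and that $V$ lies inside a common neighborhood of $y$ above which the two sheets through $x_1$ and $x_2$ remain disjoint. Once this is set up, the ``nested plates'' Lemma \ref{lem:assiettes_emboitees} delivers the contradiction essentially for free.
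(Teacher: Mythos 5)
Your proof is correct, and it is essentially the standard argument: the paper does not prove Lemma \ref{lem:fermeture} itself but refers to Barbot (Section 2.1), and your reduction --- two sheets $A_1,A_2$ over a calibrated neighborhood $V$ of $y$ with $V\cap f(U)$ connected, so that the connected plaque $W=(f|_U)^{-1}(V\cap f(U))$ must lie in both sheets by Lemma \ref{lem:assiettes_emboitees} --- is exactly that argument. The only point worth making explicit is that the neighborhoods $V$ in the fundamental system can be taken open (as they can in all the paper's applications), since Lemma \ref{lem:assiettes_emboitees} is stated for open sets.
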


The next result follows easily:

\begin{lemma}
\label{lem:intersection_connexe}
Let $V_1,V_2 \subset M$ be two open subsets such that $V_1 \cap V_2 \neq \emptyset$, $f|_{V_i}$ is injective for $i = 1,2$, and if $U_i = f(V_i)$, such that $U_1 \cap U_2$ is connected. Then, $f(V_1 \cap V_2) = U_1 \cap U_2$. In particular, $f|_{V_1 \cup V_2}$ is injective.
\end{lemma}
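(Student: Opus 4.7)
The plan is to apply Lemma \ref{lem:assiettes_emboitees} to the connected open set obtained by pulling $U_1 \cap U_2$ back through $f|_{V_2}$, and show it must lie inside $V_1$.

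\smallskip

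More precisely, let $g_i : U_i \to V_i$ denote the continuous inverse of $f|_{V_i}$ for $i=1,2$. The inclusion $f(V_1 \cap V_2) \subset U_1 \cap U_2$ is obvious, so I only need to prove the reverse inclusion. Set $W := g_2(U_1 \cap U_2) \subset V_2$. Since $g_2$ is a homeomorphism and $U_1 \cap U_2$ is connected by assumption, $W$ is connected and open. Moreover $f(W) = U_1 \cap U_2 \subset U_1$, and picking any $x_0 \in V_1 \cap V_2$ (which is nonempty by hypothesis) we get $g_2(f(x_0)) = x_0 \in V_1 \cap V_2 \subset V_1$, so $W \cap V_1 \neq \emptyset$. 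Applying Lemma \ref{lem:assiettes_emboitees} with $U = V_1$, $V = U_1$, gives $W \subset V_1$, hence $W \subset V_1 \cap V_2$. For any $y \in U_1 \cap U_2$, the point $g_2(y) \in W \subset V_1 \cap V_2$ satisfies $f(g_2(y)) = y$, so $y \in f(V_1 \cap V_2)$.

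\smallskip

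For the second assertion, suppose $x, x' \in V_1 \cup V_2$ satisfy $f(x) = f(x')$. If both points lie in the same $V_i$, injectivity of $f|_{V_i}$ yields $x = x'$. Otherwise we may assume $x \in V_1$ and $x' \in V_2$; then $f(x) \in U_1 \cap U_2 = f(V_1 \cap V_2)$, so there exists $x'' \in V_1 \cap V_2$ with $f(x'') = f(x) = f(x')$. Applying injectivity of $f$ on $V_1$ to $x, x''$ and on $V_2$ to $x', x''$ gives $x = x'' = x'$.

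\smallskip

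No step looks delicate: the only substantive input is Lemma \ref{lem:assiettes_emboitees}, and the connectedness hypothesis on $U_1 \cap U_2$ is precisely what is needed to ensure that the lift $W = g_2(U_1 \cap U_2)$ is connected so that the lemma applies.
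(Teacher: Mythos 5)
Your proof is correct and is essentially the paper's own argument: the paper also pulls $U_1\cap U_2$ back through one of the restrictions (it uses $W=V_1\cap f^{-1}(U_1\cap U_2)$, i.e. the lift into $V_1$, and pushes it into $V_2$ via Lemma \ref{lem:assiettes_emboitees}, whereas you lift into $V_2$ and push into $V_1$). The injectivity deduction at the end is the same in both.
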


\begin{proof}
We note $U= U_1 \cap U_2$ and consider $W = V_1 \cap f^{-1}(U)$. Then, $f|_W$ is injective and $f(W) = U$ is connected. It implies that $W$ is connected. Thus, as $W \cap V_2 \neq \emptyset$ and $f(W) \subset U_2 = f(V_2)$ we get $W \subset V_2$ by Lemma \ref{lem:assiettes_emboitees}, implying $W = V_1 \cap V_2$.

Thus, $(f|_{V_1})^{-1}(U_1\cap U_2) = V_1 \cap V_2$, and if $x \in V_1$ and $y \in V_2$ have same image, then $f(x)=f(y) \in U_1 \cap U_2$, implying $x \in V_1 \cap V_2$, and finally $x=y$ by injectivity of $f|_{V_2}$.
\end{proof}

\subsection{Relative compactness of maximal charts}
\label{ss:maximal_charts:relative_compactness}

\begin{proposition}
\label{prop:relative_compactness}
Assume that $\tilde{M}$ is covered by maximal charts. Then, any maximal chart $V$ is relatively compact in $\tilde{M}$.
\end{proposition}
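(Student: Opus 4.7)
The plan is to argue by contradiction. Assume $V$ is not relatively compact, so there is a sequence $(x_n) \subset V$ with no convergent subsequence in $\tilde{M}$. Since $M$ is compact, after extraction $\pi(x_n) \to p \in M$. The hypothesis of the proposition provides a maximal chart $V_p \subset \tilde{M}$ with $p \in \pi(V_p)$; set $z_p := (\pi|_{V_p})^{-1}(p)$. Since $\pi(V_p)$ is evenly covered, $\pi^{-1}(\pi(V_p)) = \bigsqcup_{\gamma \in \pi_1(M)} \gamma V_p$, and for large $n$ we have $x_n \in \gamma_n V_p$ for some $\gamma_n$. A repeated value $\gamma$ among the $\gamma_n$ would yield $x_n \to \gamma z_p$ along a subsequence via the homeomorphism $\pi|_{\gamma V_p}$, contradicting our assumption; hence, after a further extraction, the $\gamma_n$ are pairwise distinct.

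Each intersection $V \cap \gamma_n V_p$ is non-empty, containing $x_n$, so its image under $\D$ lies in $P \cap \rho(\gamma_n) P_p$, where $P := \D(V)$ and $P_p := \D(V_p)$. By Observation \ref{obs:intersection} in the conformal case (or its analogue in the projective case, where the intersection of two distinct, non-antipodal hemispheres of $\S^n$ becomes an open half-space of $\R^n$ under a gnomonic chart), this intersection is a non-empty, connected open subset of $P$ of intersection type. Lemma \ref{lem:intersection_connexe} then implies that $\D$ is injective on $V \cup \gamma_n V_p$ and that $\D(V \cap \gamma_n V_p) = P \cap \rho(\gamma_n) P_p$. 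Moreover, the sets $\pi(V \cap \gamma_n V_p)$ are pairwise disjoint in $M$: a common point would, by injectivity of $\pi|_V$, place a single lift in $\gamma_n V_p \cap \gamma_m V_p = \emptyset$. Transporting via the homeomorphism $\tau := \D \circ (\pi|_V)^{-1}: \pi(V) \to P$, the family $\{P \cap \rho(\gamma_n) P_p\}_{n}$ is therefore pairwise disjoint in $P$.

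The main obstacle is to derive the contradiction from the infinitude of this disjoint family. Identifying $P$ with $\R^{p,q}$ via a stereographic projection (respectively, with $\R^n$ via a gnomonic chart), this amounts to the geometric fact that $\R^{p,q}$ with $\min(p,q) \geq 2$, or $\R^n$, contains at most two pairwise disjoint non-empty intersection-type open subsets. Indeed, two translates $v_1 + U_S$ and $v_2 + U_S$ always meet: taking $v = v_1 + R w$ with $q(w) > 0$ and $R$ large gives $q(v - v_i) \sim R^2 q(w) > 0$ for $i = 1, 2$. The same kind of argument handles translates of $U_T$ and all mixed types — the hypothesis $\min(p,q) \geq 2$ ensures the existence of suitable spacelike (or timelike) directions inside any given isotropic hyperplane, so that a translate of $U_S$ or $U_T$ cannot avoid a half-space $H_{v_0,\alpha}$. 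Finally, two half-spaces $H_{v_0,\alpha}$ and $H_{v_0',\alpha'}$ are disjoint only when $v_0'$ is negatively proportional to $v_0$, and at most two such half-spaces can be pairwise disjoint. This bounds the family by two, contradicting its infinitude, and $V$ must therefore be relatively compact.
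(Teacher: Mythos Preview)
Your proof is correct and follows essentially the same approach as the paper's: both argue by contradiction, produce infinitely many pairwise disjoint intersections of maximal charts with a fixed one (using Lemma~\ref{lem:intersection_connexe} to identify $\D(V \cap \gamma_n V_p)$ with the intersection of the images), and conclude via the geometric fact that at most two intersection-type subsets of $\R^n$ or $\R^{p,q}$ can be pairwise disjoint. The only cosmetic difference is that the paper translates $V$ by deck transformations and fixes a maximal chart $V_0$ at the limit point, whereas you keep $V$ fixed and translate the auxiliary chart $V_p$; these are dual viewpoints of the same argument.
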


\begin{remark}
The conclusion is still valid if we only assume $M$ compact, however this statement is enough for the purpose of this article.
\end{remark}

\begin{proof}
We assume to the contrary that $V$ contains a diverging sequence $(x_k)$. By compactness of $M$, there exists a sequence $\gamma_k \in \pi_1(M)$ such that $\gamma_k.x_k \rightarrow x \in \tilde{M}$. Since $x_k$ leaves any compact subset of $\tilde{M}$, we may assume the $\gamma_k$ pairwise distinct.

The fact that $\pi|_{V}$ is injective means that for any $\gamma \in \pi_1(M)$, if $\gamma V \cap V \neq \emptyset$, then $\gamma = \id$. Consequently, the sequence $V_k := \gamma_k V$ is formed of pairwise disjoint open sets. Let $U_k = \D(V_k)$ and let $V_0 \ni x$ be a maximal chart containing $x$, and let $U_0 = \D(V_0)$. We may assume that for all $k$, $\gamma_k x_k \in V_0$, implying that $V_k \cap V_0 \neq \emptyset$.

\begin{lemma}
The subsets $U_k\cap U_0$ are pairwise disjoint.
\end{lemma}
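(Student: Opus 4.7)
The plan is to argue by contradiction: assume some point $y$ lies in $(U_k \cap U_0) \cap (U_l \cap U_0) = U_k \cap U_l \cap U_0$ for some $k \neq l$, produce preimages of $y$ inside $V_k \cap V_0$ and $V_l \cap V_0$, and then use injectivity of $\D|_{V_0}$ to force these preimages to coincide, contradicting the fact that $V_k \cap V_l = \emptyset$.

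First I would record that the $V_k$ are pairwise disjoint: injectivity of $\pi|_V$ gives $\gamma V \cap V = \emptyset$ for every $\gamma \neq \id$, so
\begin{equation*}
V_k \cap V_l = \gamma_k\bigl(V \cap \gamma_k^{-1}\gamma_l V\bigr) = \emptyset \quad \text{whenever } k \neq l,
\end{equation*}
because the $\gamma_k$ are pairwise distinct. This supplies the eventual contradiction.

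The decisive step is to show that $U_k \cap U_0$ is connected, so that Lemma \ref{lem:intersection_connexe} applies to the pair $(V_k,V_0)$ and yields $\D(V_k \cap V_0) = U_k \cap U_0$ together with injectivity of $\D|_{V_k \cup V_0}$. This is where the geometry of the model comes in. In the projective case, $U_k, U_0 \subset \S^n$ are hemispheres, and when they are neither equal nor antipodal their intersection is a spherical lune, which is connected as soon as $n \geq 2$; the antipodal case is excluded because $V_k \cap V_0 \neq \emptyset$ forces $U_k \cap U_0 \supset \D(V_k \cap V_0) \neq \emptyset$. In the conformal case, $U_k, U_0 \subset \tilde{\Ein}^{p,q}$ are Minkowski patches, and Observation \ref{obs:intersection} lists the possible shapes of $U_k \cap U_0$ once antipodal pairs are ruled out by the same non-emptiness argument; each of $v_0+U_S$, $v_0+U_T$, and $H_{v_0,\alpha}$ is connected precisely because $\min(p,q)\geq 2$. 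The case $U_k = U_0$ is trivially connected. I expect this verification of connectedness to be the only genuine content of the lemma; everything else is formal.

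Once connectedness is in hand, the contradiction is immediate. Pick $y \in U_k \cap U_l \cap U_0$. By the above, $y \in U_k \cap U_0 = \D(V_k \cap V_0)$, so there is $x \in V_k \cap V_0$ with $\D(x) = y$. Applying the same argument with $l$ in place of $k$, there is $x' \in V_l \cap V_0$ with $\D(x') = y$. Since $x,x' \in V_0$ and $\D|_{V_0}$ is injective, $x = x'$, forcing $x \in V_k \cap V_l$. This contradicts $V_k \cap V_l = \emptyset$ and finishes the proof.
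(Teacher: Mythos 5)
Your proof is correct and follows essentially the same route as the paper: verify that the intersection of two hemispheres (resp.\ Minkowski patches of $\tilde{\Ein}^{p,q}$) is connected, apply Lemma \ref{lem:intersection_connexe} to get $\D(V_k \cap V_0) = U_k \cap U_0$, and conclude from the injectivity of $\D|_{V_0}$ together with the pairwise disjointness of the $V_k$. The paper phrases the last step directly (disjoint subsets of $V_0$ have disjoint images under the injective map $\D|_{V_0}$) rather than by contradiction, but the content is identical.
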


\begin{proof}
We have seen in Section \ref{sss:intersection_einpqtilde} that when they intersect, two Minkwoski patches in $\tilde{\Ein}^{p,q}$ always have connected intersection. Consequently, the same being obvious for two hemispheres of $\S^n$, if $k$ is such that $V_k \cap V_0 \neq \emptyset$, by Lemma \ref{lem:intersection_connexe}, $\D(V_k \cap V_0) = U_k \cap U_0$. The lemma now follows immediately, as the $V_k \cap V_0$ are pairwise disjoint.
\end{proof}

We finally get a contradiction with the following.

\begin{lemma}
\label{lem:finite_cardinality}
Let $H_0 \subset \S^n$ be a hemisphere. A family $(H_i)_{i \in I}$ of hemispheres such that the $H_0 \cap H_i$ are non-empty and pairwise disjoint has cardinality at most $2$.

Let $M_0 \subset \tilde{\Ein}^{p,q}$ be a Minkowski patch. A family $(M_i)_{i\in I}$ of Minkowski patches such that the $M_0 \cap M_i$, $i \in I$, are non-empty and pairwise disjoint has cardinality at most $2$.
\end{lemma}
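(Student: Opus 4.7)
\emph{Plan.} Both parts of the lemma will be treated by the same strategy: apply an affine chart centred on $H_0$ (resp.\ a stereographic projection centred on $M_0$), classify the images $s(H_0 \cap H_i)$ (resp.\ $s(M_0 \cap M_i)$) as open subsets of a fixed affine space of a very restricted shape, and then show that pairwise disjoint families of such shapes have cardinality at most $2$.

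For the hemisphere statement, I would fix the affine chart $s : H_0 \to \R^n$ coming from the projection $\S^n \to \R P^n$. If $H_i = H_0$ for some $i$, then $|I| = 1$, because any other $H_j$ would need the non-empty set $H_0 \cap H_j$ to be disjoint from $H_0$. Otherwise $H_i \neq \pm H_0$ (non-emptyness of $H_0 \cap H_i$ excludes $H_i = -H_0$), and $s(H_0 \cap H_i)$ is an open affine half-space of $\R^n$. The bound $|I| \leq 2$ then follows from the elementary fact that two disjoint open affine half-spaces of $\R^n$ have anti-parallel linear parts (linearly independent linear parts would admit a common wedge; parallel same-direction ones would be nested), combined with the non-transitivity of anti-parallelism among non-zero vectors.

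For the Minkowski patch statement, I first dispose of edge cases: $M_i = M_0$ again reduces the family to a single element, and $M_i = \iota(M_0)$ is ruled out because $M_0 \cap \iota(M_0) = \emptyset$ (an immediate consequence of injectivity of $\pein|_{M_0}$ and freeness of $\iota$). Thus $M_i \notin \{M_0, \iota(M_0)\}$ for all $i$, and Observation~\ref{obs:intersection} applies: under a stereographic projection $s : M_0 \to \R^{p,q}$, each $s(M_0 \cap M_i)$ is of intersection type, i.e., of the form $v_i + U_S$ (Type~1), $v_i + U_T$ (Type~2), or $H_{v_i, \alpha_i}$ with $v_i \in \mathcal{C} \setminus \{0\}$ (Type~3). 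The plan is to verify by direct manipulations of the quadratic form $q$: (a) two distinct Type~1 sets intersect (take $w = t w_0$ with $q(w_0) > 0$ and $t$ large, so that both defining inequalities become $t^2 q(w_0) > 0$ at leading order), likewise for Type~2; (b) $v + U_S$ and $v' + U_T$ are disjoint iff $v = v'$, where for $v \neq v'$ an intersection point is constructed by a small perturbation of $v'$ depending on the sign of $q(v - v')$; (c) a Type~3 set always intersects any Type~1 or Type~2 set; (d) two disjoint Type~3 sets are open affine half-spaces of $\R^{p,q}$ with anti-parallel linear parts, so occur in pairs. These four facts force any pairwise disjoint family to be either a same-centre pair of Type~1 and Type~2, or an anti-parallel pair of Type~3 half-spaces, hence $|I| \leq 2$.

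The main obstacle will be (c), where the hypothesis $\min(p,q) \geq 2$ is essential. I plan to pick $\ell \in \R^{p,q}$ with $b(v_i, \ell) = 1$ and work in the decomposition $\R^{p,q} = L \oplus L^\perp$ with $L = \Span(v_i, \ell)$; since $L$ is a hyperbolic plane of signature $(1,1)$, $L^\perp$ has signature $(p-1, q-1)$, with \emph{both} summands non-trivial. Writing $w = t \ell + u$ with $u \in L^\perp$ then gives $b(v_i, w) = t$ (freely prescribable) and $q(w) = t^2 q(\ell) + q(u)$, whose sign can be controlled by a choice of spacelike or timelike $u$ in $L^\perp$ and whose magnitude can be taken arbitrarily large. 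This yields, after translation, intersection points both with $v_0 + U_S$ and with $v_0 + U_T$. In Lorentzian signature $p = 1$, one of the two summands of $L^\perp$ would be trivial and this argument would fail, consistent with the non-Lorentzian hypothesis of the lemma.
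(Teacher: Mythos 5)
Your proposal is correct and follows essentially the same route as the paper: pass to an affine chart (resp.\ a stereographic projection), classify the intersections via Observation~\ref{obs:intersection} into degenerate half-spaces and translates of $U_S$, $U_T$, and check the elementary pairwise-disjointness facts (the paper's Fact~\ref{fact:disjoint_open_sets}, stated there without proof) to conclude that a disjoint pair must be either an anti-parallel pair of half-spaces or $\{v+U_S,\, v+U_T\}$, which admits no third member. The only quibble is that the hypothesis $\min(p,q)\geq 2$ is really consumed by Observation~\ref{obs:intersection} itself (two-component intersections of Minkowski patches), not by your step (c), which in fact survives in signature $(1,q)$ by choosing $\ell$ in the hyperbolic plane with $q(\ell)$ of either sign.
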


\begin{proof}
The first part is almost immediate: if $a : H_0 \rightarrow \R^n$ is an affine chart, then $a(H_0 \cap H_i)$ is either $\R^n$ if $H_0 = H_i$ or an open half-space in $\R^n$ if not.

We fix $s_0 : M_0 \rightarrow \R^{p,q}$ a stereographic projection. For all $i \in I$, we note $U_i = s_0(M_0 \cap M_i)$. They form a family of pairwise disjoint open sets of $\R^{p,q}$ and according to Observation \ref{obs:intersection}, for all $i$, $U_i$ is either a half-space with degenerate boundary, or a translate of $U_S$ or $U_T$. We make use of the following elementary considerations.

\begin{fact}
\label{fact:disjoint_open_sets}
If two half-spaces $H_{v_1,\alpha_1}$ and $H_{v_2,\alpha_2}$ are disjoint, then $v_2=-v_1$ and $\alpha_1 \geq -\alpha_2$. 
A half-space $H_{v,\alpha}$ intersects any translate $v'+U_S$ and any translate of $U_T$. 
Two open sets $v_1+U_S$ and $v_2+U_S$ always intersect, as well as two translates of $U_T$. Moreover, $(v_1+U_S)\cap (v_2+U_T) = \emptyset$ if and only if $v_1=v_2$. 
\end{fact}

So, if $U_1,U_2 \subset \R^{p,q}$ are two disjoint open subsets of intersection type, then, either $U_1 = H_{v_1,\alpha_1}$ and $U_2 = H_{v_2,\alpha_2}$ with $v_1$ isotropic, $v_2 = -v_1$ and $\alpha_1 \geq -\alpha_2$, or $U_1 = v+U_S$ and $U_2 = v + U_T$ with $v \in \R^{p,q}$. It is then clear that any third open subset $U_3$ of intersection type cannot be disjoint from $U_1$ and $U_2$.
\end{proof}
Thus, Lemma \ref{lem:finite_cardinality} is proved, contradicting the existence of the sequence $(U_k \cap U_0)_k$, and the proof of Proposition \ref{prop:relative_compactness} is complete.
\end{proof}

\subsection{Thickenings}
\label{ss:thickenings}

A crucial point in the proof of the main results is the following.

\begin{lemma}
\label{lem:thickenings}
If $V \subset \tilde{M}$ is a relatively compact maximal chart, then there is an open neighborhood $V' \supset \bar{V}$ of the closure of $V$ in restriction to which $\D$ is still injective.
\end{lemma}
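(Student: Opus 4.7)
The plan is to first establish that $\D$ is injective on $\overline V$, then upgrade to injectivity on an open neighborhood of $\overline V$ by a standard compactness plus local-homeomorphism argument.

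For the injectivity on $\overline V$, let $y_1 \neq y_2 \in \overline V$ satisfy $\D(y_1) = \D(y_2) =: z$. If both lie in $V$, injectivity of $\D|_V$ yields a contradiction. If $y_1 \in V$ and $y_2 \in \partial V$, then $z \in \D(V)$; a small connected neighborhood $U$ of $y_2$ on which $\D$ is a homeomorphism with $\D(U) \subset \D(V)$ satisfies $U \cap V \neq \emptyset$, so Lemma \ref{lem:assiettes_emboitees} forces $U \subset V$, contradicting $y_2 \notin V$.

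The delicate case is $y_1, y_2 \in \partial V$. Invoking the covering of $\tilde M$ by maximal charts (the running assumption, which also underlies Proposition \ref{prop:relative_compactness}), I pick maximal charts $W_i \ni y_i$ for $i=1,2$. Each $W_i$ meets $V$ since $y_i \in \overline V$, and by Observation \ref{obs:intersection} combined with the connectedness of $U_S$ and $U_T$ when $\min(p,q) \geq 2$ (and the analogous statement for hemispheres in $\S^n$), the image $\D(V) \cap \D(W_i)$ is connected. Lemma \ref{lem:intersection_connexe} then implies that $\D|_{V \cup W_i}$ is injective. Applying Lemma \ref{lem:intersection_connexe} again to $V \cup W_1$ and $W_2$ is the key step: the relevant image intersection decomposes as
\[
\D(V \cup W_1) \cap \D(W_2) \;=\; \bigl(\D(V) \cap \D(W_2)\bigr) \cup \bigl(\D(W_1) \cap \D(W_2)\bigr),
\]
each piece connected by the previous paragraph. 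Since $z \in \partial \D(V) \cap \D(W_1) \cap \D(W_2)$ (the earlier cases exclude $z \in \D(V)$), the open set $\D(W_1) \cap \D(W_2)$ meets $\D(V)$, so the two pieces share a point and their union is connected. Lemma \ref{lem:intersection_connexe} then gives injectivity of $\D$ on $V \cup W_1 \cup W_2 \ni y_1, y_2$, contradicting $\D(y_1) = \D(y_2)$.

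Finally, to produce the open neighborhood, I fix a relatively compact open $W \supset \overline V$ in $\tilde M$ and take a decreasing sequence of open neighborhoods $V_n \downarrow \overline V$ inside $W$ (say, metric tubular neighborhoods of radius $1/n$). If no open thickening worked, each $V_n$ would carry distinct $x_1^{(n)} \neq x_2^{(n)}$ with $\D(x_1^{(n)}) = \D(x_2^{(n)})$; compactness of $\overline W$ delivers limits in $\overline V$ which coincide by injectivity of $\D|_{\overline V}$, and the local-homeomorphism property of $\D$ near the common limit forces $x_1^{(n_k)} = x_2^{(n_k)}$ for large $k$. The main obstacle is thus the case of two boundary collisions: one must combine three open sets $V, W_1, W_2$ into a single open set of injectivity, and the decisive input is the assumption $\min(p,q) \geq 2$ which keeps all pairwise image intersections of Minkowski patches connected, allowing the two applications of Lemma \ref{lem:intersection_connexe} to chain together.
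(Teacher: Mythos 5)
Your proof is correct, and its second half (the decreasing-neighborhoods compactness argument that upgrades injectivity on $\bar V$ to injectivity on an open thickening) is exactly the paper's. Where you diverge is in establishing injectivity of $\D$ on $\bar V$: the paper does this purely locally, by checking that the image $\D(V)$ (a hemisphere, resp.\ a Minkowski patch) is locally connected relatively to $\S^n$, resp.\ $\tilde\Ein^{p,q}$, and then invoking Lemma \ref{lem:fermeture}; the only work is verifying local connectedness of a Minkowski patch at its boundary, which reduces to $U_S$ and $U_T$ being locally connected relatively to $\R^{p,q}$. You instead surround the two colliding boundary points by maximal charts $W_1,W_2$ and chain Lemma \ref{lem:intersection_connexe} twice, which works: your verification that the two pieces of $\D(V\cup W_1)\cap\D(W_2)$ overlap (via $z\in\partial\D(V)\cap\D(W_1)\cap\D(W_2)$) is the right point to check, and connectedness of all pairwise intersections of patches is indeed where $\min(p,q)\geq 2$ enters. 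The trade-off is that your route imports the hypothesis that $\tilde M$ is covered by maximal charts, which is not among the stated hypotheses of the lemma (only relative compactness of $V$ is assumed); it is harmless here because that covering assumption is in force everywhere the lemma is applied (and already underlies Proposition \ref{prop:relative_compactness}), but the paper's argument via Lemma \ref{lem:fermeture} is self-contained and slightly more general. If you keep your version, state the covering hypothesis explicitly in the lemma.
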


\begin{proof}
The first step is to prove that $\D$ is injective in restriction to the closure $\bar{V}$. We simply have to verify that Lemma \ref{lem:fermeture} applies. In the projective case, it is immediate that a hemisphere is locally connected relatively to $\S^n$. Let us see that it is also the case for a Minkowski patch $M_0 \subset \tilde{\Ein}^{p,q}$. 

Let $x \in \partial M_0$, and let $\bar{x} = \pein(x) \in \partial \pein(M_0)$. Let $\bar{x}_0$ be the vertex of $\pein(M_0)$ and let $\bar{x}_1 \in \pein(M_0)$ be a point such that $\bar{x} \notin C(\bar{x}_1)$ and let $M_1 \subset \tilde{\Ein}^{p,q}$ be the Minkowski patch that projects to $\Ein^{p,q} \setminus C(\bar{x}_1)$ and that contains $x$. By construction, $\bar{x}_0 \in \pein(M_1)$. Let $x_0 \in p^{-1}(\bar{x_0})$ be the lift such that $x_0 \in M_1$. Finally, let $s_1 : M_1 \rightarrow \R^{p,q}$ be a stereographic projection such that $s_1(x_0)=0$. By Observation \ref{obs:intersection}, $s_1(M_1 \cap M_0)$ is one of the connected components of $\R^{p,q} \setminus \mathcal{C}$, where $\mathcal{C}$ is the isotropic cone, \textit{i.e.} either $U_S$ or $U_T$. Thus, $x$ has a neighborhood $M_1$ with a chart $s_1 : M_1 \rightarrow \R^{p,q}$ in which $M_0$ is sent to one of the above open subsets. The problem being local, it is reduced to the fact that $U_S$ and $U_T$ are locally connected relatively to $\R^{p,q}$, which can be easily observed.
 
Consequently, $\D$ is injective in restriction to $\bar{V}$, which is compact by assumption. Assume to the contrary that $\D$ is not injective in restriction to any neighborhood of $\bar{V}$. Considering a decreasing sequence $\{V_n\}$ such that $\bar{V} \subset V_n$ and $\bar{V} = \cap V_n$, we obtain two sequences $x_n,y_n \in V_n$ such that $\D(x_n)= \D(y_n)$ and $x_n \neq y_n$. By compactness of $\bar{V}$, we may assume that $V_n$ is relatively compact, and up to an extraction, $(x_n) \rightarrow x \in \bar{V}$ and $(y_n) \rightarrow y \in \bar{V}$. Then $\D(x) = \D(y)$, implying $x=y$. Thus, $(x_n)$ and $(y_n)$ converge to a same limit $x$, contradicting the injectivity of $\D$ on a neighborhood of $x$.
\end{proof}
 
\begin{remark}
It has to be noted that $\D(\bar{V}) = \bar{\D(V)}$ by relative compactness of $V$. In particular, given any small enough neighborhood $\mathcal{V} \supset \bar{\D(V)}$, there exists a neighborhood of $\bar{V}$ on which $\D$ is injective and whose image is $\mathcal{V}$. This will be used in Section \ref{s:injectivity}.
\end{remark}

\section{Atlas of maximal charts}
\label{s:atlas}

We still consider a compact $(G_{\X},\X)$-manifold $M$, with universal cover $\pi : \tilde{M} \rightarrow M$. The aim of this section is to establish that in the dynamical context of a lattice action of maximal real-rank, $M$ is covered by maximal charts.

\begin{proposition}
\label{prop:atlas}
Let $\Gamma$ be a cocompact lattice in a connected simple Lie group $G$ with finite center. 
\begin{enumerate}
\item $\X = \R P^n$. Assume that $\Gamma$ acts projectively on $M$, with infinite image, and that $\Rk_{\R} G = n$. Then, any point of $M$ is contained in a maximal chart.
\item $\X = \Ein^{p,q}$. Assume that $\Gamma$ acts conformally on $M$, with unbounded image, and that $\Rk_{\R} G = p+1$. Then, any point of $M$ is contained in a maximal chart.
\end{enumerate}
\end{proposition}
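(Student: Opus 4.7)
The plan is, for each $y \in M$, to build a maximal chart containing $y$ by taking the limit of a nested family of open sets $V_k = \tilde\gamma_k^{-1}(\tilde U)$, where $(\gamma_k)\subset\Gamma$ is a uniformly contracting sequence and $\tilde U \subset \tilde M$ is a small open set attached to the accumulation point of the $\gamma_k$-orbit of $y$. The dynamical input would be provided by Section \ref{ss:uniformly_lyapunov}: the maximal-rank assumption together with the results of \cite{pecastaing_conformal_lattice} (in the conformal case) and their analogues from Section \ref{s:projective_flatness} (in the projective case) should yield, for every $y \in M$, a sequence $(\gamma_k)\subset\Gamma$ and a point $y'\in M$ such that $\gamma_k(y)\to y'$, the derivatives along the orbit being Lyapunov regular with a deterministic uniform Lyapunov spectrum, namely that of a regular $\R$-split element of a Cartan subgroup of $G_{\X}$ acting on $\X$.

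Given this input, I would lift the picture to $\tilde M$ as follows. Pick $\tilde y\in\pi^{-1}(y)$; pick lifts $\tilde\gamma_k \in \Aut(\tilde M)$ of $\gamma_k$ so that $\tilde\gamma_k(\tilde y)\to\tilde y'$ for some $\tilde y'\in\pi^{-1}(y')$; and choose a small neighborhood $\tilde U\ni\tilde y'$ on which both $\pi$ and $\D$ are injective and such that $\D(\tilde U)$ is contained in a fixed Minkowski patch or hemisphere of $\tilde\X$. Set $V_k=\tilde\gamma_k^{-1}(\tilde U)$. By $\tilde\rho$-equivariance one has $\D(V_k)=h_k^{-1}\bigl(\D(\tilde U)\bigr)$, where $h_k=\tilde\rho(\tilde\gamma_k)\in\Aut(\tilde\X)$. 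The uniform Lyapunov spectrum forces $h_k$ to be, modulo a bounded compact factor, a divergent sequence in a fixed Cartan subgroup of $G_{\X}$ with $\D(\tilde y')$ as attracting fixed point; hence, after extraction, the $V_k$ form an increasing family of open subsets of $\tilde M$ whose union $V_\infty$ is open and contains $\tilde y$.

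The heart of the argument is then to check that $V_\infty$ is a maximal chart. For the injectivity of $\D|_{V_\infty}$: if $p_1,p_2\in V_k$ satisfy $\D(p_1)=\D(p_2)$, then $\D(\tilde\gamma_k p_i)=h_k(\D(p_i))$ agree, and $\tilde\gamma_k p_i\in\tilde U$, where $\D$ is injective, so $p_1=p_2$. For the injectivity of $\pi|_{V_\infty}$: if $\eta\in\pi_1(M)\setminus\{1\}$ satisfies $\eta V_\infty\cap V_\infty\neq\emptyset$, then for $k$ large $\tilde\gamma_k\eta\tilde\gamma_k^{-1}$ sends a point of $\tilde U$ into $\tilde U$; using injectivity of $\pi$ on $\tilde U$ together with the freeness of the $\pi_1(M)$-action on $\tilde M$, one deduces $\tilde\gamma_k\eta\tilde\gamma_k^{-1}=1$, hence $\eta=1$. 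Finally, $\D(V_\infty)=\bigcup_k h_k^{-1}(\D(\tilde U))$ is the basin of attraction of $\D(\tilde y')$ for the Cartan element dictated by the Lyapunov spectrum, which in both model spaces is exactly a hemisphere (in $\R P^n$) or a Minkowski patch (in $\tilde\Ein^{p,q}$); so $V_\infty$ has all the properties of a maximal chart through $\tilde y$.

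The main obstacle, in my view, is the first step: producing uniformly contracting sequences with the correct Lyapunov spectrum at \emph{every} point $y\in M$, not just at one convenient one. In the conformal setting this should be the content of the machinery of \cite{pecastaing_conformal_lattice}, while in the projective setting the required estimates are to be extracted from the discussion in Section \ref{s:projective_flatness}. A second, more technical, subtlety is the identification, at the level of $\Aut(\tilde\X)$, between the Lyapunov dynamics seen on $M$ and the actual Cartan dynamics on $\tilde\X$: it is precisely this matching that guarantees $\D(V_\infty)$ is a hemisphere or Minkowski patch rather than some ill-controlled open subset. Once these two points are granted, the injectivity assertions become formal consequences of Lemmas \ref{lem:assiettes_emboitees} and \ref{lem:intersection_connexe} of Section \ref{s:maximal_charts}.
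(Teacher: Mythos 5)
Your overall strategy is the paper's: contract a small set onto a recurrent point, pull back by $\gamma_k^{-1}$, and show the limit is a maximal chart. But the point you flag as ``the main obstacle'' --- producing the contracting Lyapunov-regular data at \emph{every} point $y$ --- is a real gap that you leave unresolved, and the paper does not resolve it the way you anticipate. Lemma \ref{lem:localisation_lyapunov_data} only produces such data at \emph{one} point $x$ inside any compact $\Gamma$-invariant set; it is not available at an arbitrary $y$. The paper's resolution is much softer: apply the lemma to $K=\overline{\Gamma.y}$, build the maximal chart $U_\infty$ at the resulting point $x\in\overline{\Gamma.y}$, and then observe that since $U_\infty$ is \emph{open} and $x$ lies in the closure of $\Gamma.y$, some $\gamma.y\in U_\infty$, so $\gamma^{-1}U_\infty$ is a maximal chart containing $y$. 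Without this last translation step (or a genuinely pointwise version of the dynamical input, which the measure-theoretic machinery does not give), your argument only covers the points of $M$ lying in the supports of the relevant measures.

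There are two further gaps in the construction of $V_\infty$ itself. First, you assert that after extraction the $V_k=\tilde\gamma_k^{-1}(\tilde U)$ form an \emph{increasing} family. The $\gamma_k$ are not iterates of a single element, so the $V_k$ need not be nested, and your injectivity argument (which tests two points lying in a common $V_k$) does not apply to $\bigcup_k V_k$. This is exactly why the paper defines $V_\infty = D^{-1}(W_{\max})\cap\bigcup_{k}\bigcap_{l\geq k}V_l$: any two points of this set do lie in a common $V_l$, and openness and the identity $\D(V_\infty)=W_{\max}$ are then proved via Lemma \ref{lem:assiettes_emboitees} and the uniform contraction of compacta of $W_{\max}$. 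Second, the claim that $h_k$ is ``modulo a bounded compact factor'' a divergent Cartan sequence, and that $\bigcup_k h_k^{-1}(\D(\tilde U))$ is exactly a hemisphere or Minkowski patch, hides the substance of Lemma \ref{lem:gk}: one must write $g_k=e^{X_k}l_ka_kl_k'$ with $X_k\to 0$ and $l_k,l_k'$ bounded in $P$ (not in a compact group), and in particular prove that the unipotent $\p_+$-component is bounded --- this is where the explicit computation on projective lines (resp.\ the argument from \cite{frances_degenerescence}) enters --- after which the attracting domain is $l'^{-1}W_0$ for $l'=\lim l_k'$, not the basin of a Cartan element chosen a priori.
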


We fix $\Gamma$ and $G$ satisfying the hypothesis of Proposition \ref{prop:atlas}. Lifting all elements of $\Gamma$ to $\tilde{M}$, we obtain a discrete subgroup $\tilde{\Gamma} < \Aut(\tilde{M})$ containing $\pi_1(M)$ as a normal subgroup, and such that $\tilde{\Gamma} / \pi_1(M) \simeq \Gamma$.

\subsection{Uniformly Lyapunov regular data}
\label{ss:uniformly_lyapunov}

This proposition relies on the dynamical phenomenon which is used in Section \ref{s:projective_flatness} for proving projective flatness, as well as in \cite{pecastaing_conformal_lattice} for proving conformal flatness. Namely:

\begin{lemma}
\label{lem:localisation_lyapunov_data}
In any compact, $\Gamma$-invariant subset of $M$, there is a point $x$ such that there exist a sequence $(\gamma_k)$ in $\Gamma$, a sequence of positive numbers $T_k \rightarrow \infty$, and a connected neighborhood $U$ of $x$ such that
\begin{enumerate}
\item $\gamma_k U \rightarrow \{x\}$ for the Hausdorff topology,
\item for all $v \in T_xM \setminus \{0\}$, $\frac{1}{T_k} \log \|D_x \gamma_k v\| \rightarrow -1$.
\end{enumerate}
\end{lemma}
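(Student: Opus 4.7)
The plan is to combine the suspension construction, the existence of an invariant measure for a maximal split torus (obtained via Markov--Kakutani), the Oseledets multiplicative ergodic theorem applied to a well-chosen one-parameter subgroup, and Poincar\'e recurrence. Let $K \subset M$ be compact and $\Gamma$-invariant. Since $\Gamma$ is cocompact in $G$, the suspension $\hat M := (G \times M)/\Gamma$, with $\Gamma$ acting by $\gamma \cdot (g,x) = (g\gamma^{-1}, \rho(\gamma)x)$, is a compact $G$-space for left multiplication on the first factor, and $\hat K := (G \times K)/\Gamma \subset \hat M$ is compact and $G$-invariant. Fix a maximal $\R$-split torus $A < G$: it is amenable, so Markov--Kakutani yields an $A$-invariant Borel probability measure $\hat\nu$ supported on $\hat K$; replacing $\hat \nu$ by one of its ergodic components if necessary, we may assume it is $A$-ergodic.

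Let $\a = \Lie(A)$ and choose $X \in \a$ playing the role of the grading element of the parabolic stabilizing a point $x_0$ of $\X$: in both the projective and the conformal settings, the centre of the Levi factor of this parabolic contains a one-parameter subgroup acting as a uniform dilation on $T_{x_0}\X$, and the maximal rank hypothesis $\Rk_\R G = \Rk_\R G_\X$ makes it possible to realize such an $X$ inside $\a$ itself. Apply Oseledets to the flow $a_t := \exp(tX)$ on $(\hat M, \hat \nu)$: the tangent bundle $T \hat M$ splits equivariantly into a horizontal part along $G/\Gamma$, on which $a_t$ acts by $\Ad(a_t)$ with Lyapunov exponents given by the $\ad(X)$-eigenvalues on $\g$, and a vertical (fibre) part identified with $TM$. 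On the fibre, a resonance argument \`a la Bader--Frances--Melnick (already implemented for conformal actions in \cite{pecastaing_conformal_lattice}) constrains the Lyapunov exponents to lie in the set of $\ad(X)$-eigenvalues on $\g$; the maximal rank hypothesis together with the particular choice of $X$ then pins all nonzero fibre exponents to a common value, which after rescaling $X$ equals $-1$. Hence the spectrum on $TM$ is uniformly $-1$.

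Finally, let $\hat x = [\bar g, x] \in \hat K$ be a point in the Oseledets regular set, with $x \in K$. By Poincar\'e recurrence, there is a sequence $t_k \to +\infty$ such that $a_{t_k} \cdot \hat x$ returns arbitrarily close to $\hat x$. Unwinding the suspension, this produces a sequence $\gamma_k \in \Gamma$ with $\rho(\gamma_k)x \to x$ and such that $D_x \rho(\gamma_k)$ coincides, up to a factor tending to the identity, with the restriction of $D a_{t_k}$ to the fibre at $\hat x$; setting $T_k := t_k$, the uniform fibre spectrum then yields property $(2)$. Property $(1)$ follows by picking $U$ a small connected neighbourhood of $x$: the uniform contraction of the derivatives, extended by continuity from $x$ to a neighbourhood, forces $\gamma_k U$ to shrink to $\{x\}$ in the Hausdorff topology. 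The main obstacle is the content of Step 2, namely obtaining a \emph{uniform} spectrum on $TM$ -- Oseledets a priori only gives a filtration by exponents -- which requires the Bader--Frances--Melnick-type resonance bound on fibre exponents in tandem with the maximal rank hypothesis to pin all nonzero exponents to the same value.
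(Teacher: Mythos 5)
Your overall strategy (suspension, $A$-invariant measure, vertical Lyapunov exponents, recurrence) is the right one, but as written the argument would ``prove'' the lemma for an arbitrary projective or conformal action of $\Gamma$, and the statement is simply false in that generality: for an action with bounded image all the fibre Lyapunov exponents vanish, so no rescaling of $X$ can make them equal to $-1$. Nowhere do you use the hypothesis that $\rho(\Gamma)$ is infinite (resp.\ unbounded). This is the essential missing step: one must first show that the $A$-invariant, $A$-ergodic measure $\mu$ on the suspension --- which, by the way, must be taken to \emph{project to the Haar measure} on $G/\Gamma$, a constraint you do not impose but which is needed both here and for the invariance principle below --- is \emph{not} $G$-invariant. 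In the paper this is done via cocycle super-rigidity applied to the holonomy cocycle of the Cartan bundle together with freeness and properness of the $\Proj(M,[\nabla])$- (resp.\ $\Conf(M,g)$-) action on that bundle: a $G$-invariant $\mu$ would yield a finite $\Gamma$-invariant measure on $B$, hence relative compactness of $\rho(\Gamma)$, contradicting the dynamical hypothesis. Only then can one conclude that the vertical Lyapunov functionals $\chi_1,\dots,\chi_r\in\a^*$ do not all vanish on some common direction (else, by the key invariance property from \cite{BFH}, $\mu$ would be $G$-invariant), which is what ultimately produces an $X\in\a$ with $\chi_1(X)=\cdots=\chi_r(X)=-1$.

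Two further steps are not sound as stated. First, your choice of $X$ as ``the grading element of the parabolic stabilizing a point of $\X$'' presupposes a relation between $\a\subset\g$ and $\g_{\X}$ that does not exist at this stage: there is no homomorphism $\g\to\g_{\X}$ available for a lattice action, so this $X$ has no a priori realization in $\a$, and the resonance heuristic you invoke does not by itself force all nonzero fibre exponents to coincide. The actual mechanism (in the projective case) is a dimension count: $r\le\dim M=n=\Rk_{\R}G$, and the $\chi_i$ must span $\a^*$ on pain of $G$-invariance, so $r=n$, the $\chi_i$ are linearly independent, and the linear system $\chi_i(X)=-1$ has a (unique) solution; the conformal case is handled by the finer analysis of \cite{pecastaing_conformal_lattice}. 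Second, deducing item (1) ``by continuity'' from the derivative estimate at the single Oseledets-regular point $x$ is not a proof: pointwise Lyapunov regularity does not give Hausdorff contraction of a fixed neighbourhood. One needs Pesin local stable manifolds of uniform size along a subsequence of return times to a Pesin block, which is exactly the ``pseudo-return times'' construction of Section 6.2 of \cite{pecastaing_conformal_lattice} that the paper relies on.
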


In fact, we know more than this, but it is all what we need here. 

\begin{proof}
We summarize the ideas for the conformal case, which are easily transferable to the projective one, and refer to Section 6 of \cite{pecastaing_conformal_lattice} for more details. If $M^{\alpha} \rightarrow G/\Gamma$ is the suspension bundle and if $K \subset M$ is a compact $\Gamma$-invariant subset, then $K^{\alpha}:= (K\times M) / \Gamma \subset M^{\alpha}$ is $G$-invariant. Let $A<G$ be a Cartan subspace. We pick a finite $A$-invariant, $A$-ergodic measure $\mu$ supported in $K^{\alpha}$ and that projects to the Haar measure of $G/\Gamma$. Super-rigidity of cocycles and the rigidity of the $\Gamma$-invariant geometric structure on $M$ imply that $\mu$ cannot be $G$-invariant (see Proposition 4.1 of \cite{pecastaing_conformal_lattice}). We then consider its vertical Lyapunov exponents $\chi_1,\ldots,\chi_r \in \a^*$. One of the key steps of the proof of the main result of \cite{BFH} then implies that there exists $X \in \a$ such that $\chi_1(X) = \cdots = \chi_r(X)=-1$. Considering a recurrent point $x^{\alpha} \in K^{\alpha}$ and local stable manifolds of the corresponding flow on $M^{\alpha}$, we get ``pseudo-return'' times $T_k$ for $\phi_X^t$. Translating this in terms of dynamics in $M$, we get the announced sequence $(\gamma_k)$ (see Section 6.2 of \cite{pecastaing_conformal_lattice}).
\end{proof}

\begin{definition}
Let $N$ be a Riemmanian manifold and $x \in N$. A \textit{uniformly Lyapunov regular data at $x$} is a triple $(U,(\gamma_k),(T_k))$ where $U$ is an open neighborhood of $x$, $(\gamma_k)$ a sequence in $\Diff(N)$ and $(T_k) \to \infty$ which satisfy the conclusions of Lemma \ref{lem:localisation_lyapunov_data}.
\end{definition}

The choice of the Riemannian norm is arbitrary when $N$ is compact. Here, we fix a Riemannian metric on $\X$ and $M$, and pull it back to $\tilde{\X}$ and $\tilde{M}$. We will implicitly refer to these metrics in the sequel.

For establishing Proposition \ref{prop:atlas}, we prove that a uniformly Lyapunov regular data at $x$ given by Lemma \ref{lem:localisation_lyapunov_data} gives rise, for any $\tilde{x} \in \pi^{-1}(x)$, to a maximal chart containing $\tilde{x}$. Once this is proved, applying Lemma \ref{lem:localisation_lyapunov_data} to any orbit closure $K=\bar{\Gamma.y}$, Proposition \ref{prop:atlas} follows directly.

\begin{remark}
\label{rem:ULRLinear}
If $\gamma_k.x = x$ for all $k$, then the second point of Lemma \ref{lem:localisation_lyapunov_data} means that the sequence of matrices $D_x \gamma_k \in \GL(T_xM)$ is uniformly $(T_k)$-Lyapunov regular, in the sense of Definition 6.9 of \cite{pecastaing_conformal_lattice}. 
\end{remark}

Let $(U,(\gamma_k),(T_k))$ be a uniformly Lyapunov regular data at a point $x \in M$, with $\gamma_k \in \Gamma$. Let $\tilde{x} \in \tilde{M}$ be a point over $x$. Reducing $U$ if necessary, there is a neighborhood $V$ of $\tilde{x}$ such that $\pi : V \rightarrow U$ is a diffeomorphism. For $k$ large enough, $\gamma_kU \subset U$ and there exists a unique $\tilde{\gamma_k} \in \tilde{\Gamma}$ projecting to $\gamma_k$ and such that $\tilde{\gamma}_k(\tilde{x}) \in V$. It follows that $\tilde{\gamma}_k V \subset V$ because $\pi(\tilde{\gamma}_kV) \subset U$. And since $\pi$ conjugates smoothly the action of $\tilde{\gamma}_k$ on $V$ to that of $\gamma_k$ on $U$, we get that $(V,(\tilde{\gamma_k}),(T_k))$ is a uniformly Lyapunov regular data at $\tilde{x}$.

Let $g_k = \rho(\tilde{\gamma}_k)$. If $U$ is small enough, $D$ realizes a diffeomorphism from $V$ onto its image $W \subset \X$. Then, $g_k$ preserves $W$ and has the same dynamical property as $\tilde{\gamma}_k|_V$, \textit{i.e.} $(W,(g_k),(T_k))$ is a uniformly Lyapunov regular data at $x_0 := \Db(\tilde{x}) \in \X$.

\subsection{Uniformly Lyapunov regular data on $\X$}
\label{ss:uniformly_LR_on_X}

We now consider such dynamical data on the model space $\X$. We start with some notations.

For $\X = \R P^n$, we choose $x_0=[1:0:\ldots:0]$ as an origin and note $P < G_{\X} = \PGL(n+1,\R)$  its stabilizer. We note $\a \subset \p$ the Cartan subspace of $\g_{\X}$ formed of traceless diagonal matrices. We note 
\begin{align*}
\n_- = \left \{
\begin{pmatrix}
0 & 0 \\
v & 0
\end{pmatrix}
, \ v \in \R^n
\right \} \subset \sl(n+1,\R)
\text{ and } 
\p_+ = \left \{
\begin{pmatrix}
0 & \, ^{t} \! v \\
0 & 0
\end{pmatrix}
, \ v \in \R^n
\right \} \subset \p.
\end{align*}

For $\X = \Ein^{p,q}$, we use the coordinates of $\R^{p+1,q+1}$ introduced in Section \ref{s:preliminaries}, and we also note $x_0 = [1:0:\ldots:0]$ and $P < G_{\X} = \PO(p+1,q+1)$ its stabilizer. We note $\a < \p$ the Cartan subspace of $\g_{\X}$ formed of diagonal matrices of the form
\begin{equation*}
\begin{pmatrix}
\mu_0 & & & & & \\
 & \ddots & & & & & \\
 & & \mu_p & & & & \\
 & & & 0 & & & \\
 & & & & -\mu_p & & \\
 & & & & & \ddots & \\
 & & & & & & -\mu_0
\end{pmatrix}
\text{ with }
\mu_0,\ldots,\mu_p \in \R
\text{ and the } 0 \text{ of size } q-p.
\end{equation*}
in the coordinates introduced in Section \ref{s:preliminaries}. We also note
\begin{equation*}
\n_- = 
\left \{
\begin{pmatrix}
0 & 0 & 0 \\
v & 0 & 0 \\
0 & - \, ^{t} \! v J_{p,q} & 0
\end{pmatrix}
, \ v \in \R^{p,q}
\right \} \subset \so(p+1,q+1)
\end{equation*}
and
\begin{equation*}
\p_+ = 
\left \{
\begin{pmatrix}
0 & - \, ^{t} \! v J_{p,q} & 0 \\
0 & 0 & v \\
0 & 0 & 0
\end{pmatrix}
, \ v \in \R^{p,q}
\right \} \subset \p.
\end{equation*}
In both cases, $\n_-$ and $\p$ are supplementary and $\p_+$ is the nilradical of $\p$. We note $P_+ = \exp(\p+)$, and $G_0 < P$ the section of $P/P_+$ whose Lie algebra is
\begin{align*}
\g_0 & = \left \{
\begin{pmatrix}
-\Tr(A) & \\
 & A
\end{pmatrix}
, \ A \in \mathfrak{gl}(n,\R) 
\right \} \text{ for } \X = \R P^n \\
\g_0 & = \left \{
\begin{pmatrix}
\mu_0 & & \\
 & A & \\
 & & - \mu_0
\end{pmatrix}
, \ A \in \so(p,q), \ \mu_0 \in \R\}
\right \}
 \text{ for } \X = \Ein^{p,q}.
\end{align*}
Finally, we note $W_0 \subset \X$ the image of the map $X \in \n_- \mapsto e^X.x_0$, which is a diffeomorphism onto its image. It is an affine chart domain when $\X = \R P^n$ and a Minkowski patch when $\X = \Ein^{p,q}$.

\begin{lemma}
\label{lem:gk}
Let $(g_k)$ be a sequence in $G_{\X}$ and $x \in \X$ such that:
\begin{enumerate}
\item for all sequence $x_k\rightarrow x$, we have $g_k.x_k \rightarrow x$,
\item for all non-zero $v \in T_{x}\X$, $\frac{1}{T_k} \log \|D_{x}g_k v\| \rightarrow -1$.
\end{enumerate}
Then, up to passing to a subsequence, there exists $W_{\max} \ni x$ which is an affine chart domain if $\X = \R P^n$ or a Minkowski patch if $\X = \Ein^{p,q}$, and such that for all compact subset $K \subset W_{\max}$, $g_kK \rightarrow \{x\}$ for the Hausdorff topology.
\end{lemma}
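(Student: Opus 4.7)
The plan is to recover the asymptotic behaviour of $(g_k)$ from a Cartan ($KAK$) decomposition of $G_{\X}$, and to take $W_{\max}$ to be the image of the standard big cell $W_0 = \exp(\n_-).x_0$ by a suitable element of the maximal compact. After conjugating by a fixed element of $G_{\X}$, I may assume $x = x_0$. Write $g_k = k_k a_k k_k'$ with $k_k, k_k'$ in a maximal compact $K < G_{\X}$ and $a_k = \exp(Y_k)$, $Y_k \in \overline{\a^+}$. Extracting a subsequence, assume $k_k \to k_\infty$ and $k_k' \to k_\infty'$.

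The Lyapunov hypothesis~(2), combined with the uniform boundedness of the $C^1$-norms of $k_k$ and $k_k'$, translates into a constraint on the Cartan projection: for every restricted root $\alpha$ of $\a$ appearing in $\n_-$, $\frac{1}{T_k}\alpha(Y_k) \to -1$. In both the projective and the conformal cases these roots span $\a^*$, so $Y_k/T_k \to Y_\infty$, where $Y_\infty \in \a$ is the unique element on which every such $\alpha$ evaluates to $-1$. In particular $Y_\infty$ lies in the interior of $\overline{\a^+}$, and the one-parameter subgroup $\{\exp(tY_\infty)\}_{t \geq 0}$ acts on $W_0$ by the uniform contraction $\exp(Y)x_0 \mapsto \exp(e^{-t}Y)x_0$ for $Y \in \n_-$. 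Consequently, $a_k.K' \to \{x_0\}$ in Hausdorff topology for every compact $K' \subset W_0$.

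The topological contraction hypothesis~(1) is then used to locate $k_\infty$ and $k_\infty'$. The key step is to show $k_\infty' x_0 \in W_0$; granting this, applying~(1) to the constant sequence $x_k = x_0$ and using $a_k(k_k' x_0) \to x_0$ forces $k_\infty x_0 = x_0$, i.e.\ $k_\infty \in K \cap P$. Otherwise, were $k_\infty' x_0$ in the closed complement $\X \setminus W_0$, one could produce sequences $x_k \to x_0$ along which $k_k' x_k$ approaches $\partial W_0$ at rate of order $e^{-T_k}$; on such sequences $a_k(k_k' x_k)$ would not converge to $x_0$, contradicting~(1).

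Finally, set $W_{\max} := (k_\infty')^{-1} W_0$, an affine chart (resp.\ a Minkowski patch) containing $x$. For any compact $K \subset W_{\max}$, $k_\infty' K$ is a compact subset of $W_0$, and by continuity there is a fixed compact $K' \Subset W_0$ with $k_k' K \subset K'$ for all large $k$. Combining $a_k K' \to \{x_0\}$ with $k_k \to k_\infty \in K \cap P$ (which fixes $x_0$) yields $g_k K = k_k a_k k_k' K \to \{x_0\} = \{x\}$ in Hausdorff topology, as required. The main obstacle is the proof that $k_\infty' x_0 \in W_0$: hypothesis~(2) alone only constrains the differential at $x$, and one must exploit the uniformity of~(1) over all sequences $x_k \to x$ to rule out a pathological alignment of $k_\infty'$ with $\partial W_0$.
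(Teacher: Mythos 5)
Your route (global $KAK$ decomposition $g_k=k_ka_kk_k'$ in $G_{\X}$) is genuinely different from the paper's, which first uses hypothesis (1) to write $g_k=e^{X_k}p_k$ with $X_k\to 0$ in $\n_-$ and $p_k\in P$, and only then applies a $KAK$ decomposition \emph{inside the Levi factor} $G_0$, the hard step being to bound the $\p_+$-component of $p_k$ via an explicit computation on a projective line. The payoff of the paper's choice is that $p_k$ fixes $x_0$, so every derivative is computed at $x_0$ and hypothesis (2) translates cleanly, through the isotropy representation, into uniform Lyapunov regularity of $\rho(a_k)=\Ad(a_k)|_{\g_{\X}/\p}$; moreover the bounded factors $l_k,l_k'$ live in $P$, so $W_{\max}=l'^{-1}W_0$ contains $x_0$ for free.

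Your version has a genuine gap, and it is circular. The assertion that (2) forces $\frac{1}{T_k}\alpha(Y_k)\to-1$ for every root $\alpha$ of $\n_-$ does not follow from ``uniform boundedness of the $C^1$-norms of $k_k,k_k'$'': in $D_{x_0}g_k=D k_k\cdot D_{k_k'x_0}a_k\cdot D_{x_0}k_k'$, the derivative of $a_k$ is taken at the \emph{moving} point $k_k'x_0$, and it is comparable to $\Ad(a_k)|_{\n_-}$ only if $k_k'x_0$ stays in a fixed compact subset of $W_0$ (e.g.\ in $\R P^2$, if $k_k'x_0$ sits on the hyperplane at infinity and $Y_k/T_k\to\diag(\tfrac23,-\tfrac13,-\tfrac13)$, then $D_{k_k'x_0}a_k$ has a singular value of order $e^{T_k}$). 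But your only argument that $k_\infty'x_0\in W_0$ invokes the asymptotics of $a_k$ that you are trying to establish, so each half of the proof presupposes the other. Moreover, the one-sentence sketch for ruling out $k_\infty'x_0\in\X\setminus W_0$ does not, as written, produce a contradiction with (1): in the regime your spectrum forces, $a_k$ acts almost isometrically on $\partial W_0$ (in $\R P^n$ it restricts there to a bounded family of projective maps), so sequences $x_k\to x_0$ with $k_k'x_k\in\partial W_0$ satisfy $g_kx_k\to k_\infty k_\infty'x_0$, which need not differ from $x_0$; the actual contradiction has to come from (2) at a point of $\partial W_0$, which again requires the unproved asymptotics. (A minor additional slip: $Y_\infty$ lies on a wall of $\overline{\a^+}$, not in its interior, in both the projective and conformal cases; this does not affect the contraction of $\n_-$.) To repair the argument you would need to break the circularity, e.g.\ by first extracting from (1) alone that $g_k$ converges to a quasi-projective map with attracting point $x_0$ in the sense of Frances' degenerescence theory, before feeding in (2); this is essentially what the paper's parabolic-adapted decomposition accomplishes.
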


\begin{proof}

By homogeneity, we may assume $x=x_0$. We first prove that there exists $X_k \in \n_-$, with $(X_k)\rightarrow 0$, bounded sequences $(l_k),(l_k')$ in $P$, and a sequence $(a_k)$ in $A$ such that 
\begin{equation}
\label{eq:forme_de_g_k}
g_k = e^{X_k}l_ka_kl_k' \text{ for all } k.
\end{equation}
In the conformal case, this is in fact a basic case of Lemma 4.3 of \cite{frances_degenerescence}. The same approach works similarly in the projective case. We nonetheless explain how it works in this model situation. For $k$ large enough, $g_k x_0 \in W_0$ and there exists a unique $X_k \in \n_-$ such that $g_kx_0 = e^{X_k}x_0$, and $X_k \rightarrow 0$. Then, $p_k := e^{-X_k}g_k \in P$ has the same properties as $g_k$. Indeed, if we note $g_k' = e^{-X_k}$, then $g_k'$, seen as diffeomorphisms of $\X$, are bounded in topology $C^1$ since $g_k' \rightarrow \id$ in the Lie group. Thus, there is $C>0$ such that $\frac{1}{C}\| v\| \leq \|D_xg_k' v\| \leq C \|v\|$ for all $k \geq 0$ and $(x,v)$ tangent vector of $\X$. The property on the exponential growth rate of $D_{x_0}p_k$ follows directly. Also, for all $x_k \rightarrow x_0$, we have $g_kx_k \rightarrow x_0$ by assumption, and there exists $h_k \in G$, with $h_k \rightarrow \id$ such that $g_kx_k = h_kx_0$, proving that $e^{-X_k}g_kx_k = e^{-X_k}h_kx_0 \rightarrow x_0$ since $e^{-X_k}h_k \rightarrow \id$.

We decompose $p_k = p_k^{\ell} e^{Y_k}$ where $p_k^{\ell} \in G_0$ and $Y_k \in \p_+$ according to $P = G_0 \ltimes P_+$. The $KAK$ decomposition of $G_0$ gives bounded sequences $(l_k),(m_k) \in G_0$, and a sequence $a_k \in A$ such that $p_k^{\ell} = l_k a_k m_k$. Thus, $p_k = l_k a_ke^{Y_k'}m_k$, where $Y_k'=\Ad(m_k)Y_k$. Let $p_k' = a_k e^{Y_k'}$. We claim that $(p_k')$ satisfies the same hypothesis as $(g_k)$. Indeed, if $x_k \rightarrow x_0$, then writing $x_k = e^{X_k'}x_0$ for some $X_k' \in \n_-$, such that $X_k' \rightarrow 0$, we get $m_k^{-1}x_k = e^{\Ad({m_k}^{-1})X_k'}x_0$ since $m_k \in P$, and $\Ad({m_k}^{-1})X_k' \rightarrow 0$ as $\Ad({m_k}^{-1})$ is bounded. This proves that ${m_k}^{-1}x_k \rightarrow x_0$. Consequently, $p_k{m_k}^{-1}x_k \rightarrow x_0$, and finally $p_k'x_k = l_k^{-1}p_k{m_k}^{-1}x_k \rightarrow x_0$ by the same argument. The property on the exponential growth rate is also preserved because $D_{x_0}l_k$ and $D_{x_0}m_k$ are bounded sequences in $\GL(T_{x_0}\X)$ (see Remark \ref{rem:ULRLinear} and Lemma 6.10 of \cite{pecastaing_conformal_lattice}).

Using this property of $p_k'$, we prove now that $Y_k'$ is a bounded sequence of $\p_+$, which will establish (\ref{eq:forme_de_g_k}). As we said, we only have to verify this in the case $\X = \R P^n$. We note
\begin{equation*}
a_k =
\begin{pmatrix}
\lambda_0^{(k)} & & \\
& \ddots & \\
& & \lambda_{n}^{(k)}
\end{pmatrix}
\text{ and }
e^{Y_k'} =
\begin{pmatrix}
1 & v^{(k)} \\
0 & \id 
\end{pmatrix}
\end{equation*}
where $\lambda_i^{(k)}>0$ and $v^{(k)} \in \R^n$. We assume to the contrary that some component $v_i^{(k)}$ of $v^{(k)}$ is unbounded. Up to an extraction, we may assume $|v_i^{(k)}| \rightarrow \infty$. We get a contradiction with the first property of $p_k'$ by considering its action on the projective line
\begin{equation*}
p_k' [1:0:\ldots:t:\ldots:0] = [\lambda_0^{(k)}(1+v_i^{(k)}t):0:\ldots:\lambda_i^{(k)}:\ldots:0],
\end{equation*}
where $t$ stands at the $(i+1)$-th position. For $k$ large enough, we can consider $x_k := [1:0:\cdots:-\frac{1}{v_i^{(k)}}:\ldots:0]$ and we get that $p_k' x_k = [0:\ldots:1:\ldots:0]$ does not converge to $x_0$, whereas $x_k \rightarrow x_0$ since $|v_i^{(k)}| \rightarrow \infty$, a contradiction.

Finally, $e^{Y_k'} \in P$ is bounded, and if we set $l_k' = e^{Y_k'}m_k$, we get as announced
\begin{equation*}
g_k= e^{X_k}p_k = e^{X_k} l_k p_k' m_k = e^{X_k} l_k a_k l_k'
\end{equation*}
where $X_k \in \n_-$ goes to $0$, $a_k \in A$, and $l_k,l_k' \in P$ are bounded sequences.

\vspace{.2cm}

We note $\rho : P \rightarrow \GL(\g_{\X}/\p)$ the map obtained by inducing the adjoint representation of $P$ on $\g_{\X}/\p$. We remind that $\rho$ is conjugate to the isotropy representation $P \rightarrow \GL(T_{x_0}\X)$ via the identification $T_{x_0}\X \simeq \g_{\X} / \p$ given by the orbital map at $x_0$.

\begin{claim}
The sequence $\rho(a_k) \in \GL(\g_{\X}/\p)$ is $(T_k)$-uniformly Lyapunov regular (see Remark \ref{rem:ULRLinear}).
\end{claim}

By Lemma 6.10 of \cite{pecastaing_conformal_lattice}, it is the same as saying that $\rho(l_k a_k l_k') = \rho(p_k)$ is uniformly Lyapunov regular. And this was observed at the beginning of the proof, proving this claim.

\vspace{.2cm}

The action of $\rho(a_k)$ on $\g_{\X}/\p$ is the same as $\Ad(a_k)$ on $\n_-$. Writing
\begin{equation*}
\Ad(a_k)|_{\n_-} = \diag(\mu_1^{(k)},\dots,\mu_n^{(k)}),
\end{equation*}
the previous claim means $\frac{1}{T_k} \log \mu_i^{(k)} \rightarrow -1$ for all $i \in \{1,\ldots,n\}$. This implies that for any compact subset $\mathcal{K} \subset \n_-$, $\Ad(a_k)\mathcal{K} \rightarrow \{0\}$ for the Hausdorff topology, because for $k$ large enough, $\mu_i^{(k)} \leq e^{-T_k/2}$.

Up to an extraction, we may assume $l_k' \rightarrow l' \in P$. Let $W_{\max} := l'^{-1}W_0$. We prove now that for all compact subset $K \subset W_{\max}$, $g_kK \rightarrow \{x_0\}$ for the Hausdorff topology. 

Let $V \ni x_0$ be a neighborhood of $x_0$. As $X_k \rightarrow 0$, there is $k_0$ and another neighborhood $V_0 \ni x_0$ such that for all $k \geq k_0$, $e^{X_k}V_0 \subset V$. Reducing $V_0$ if necessary, we may assume $V_0 = \exp(\mathcal{V}_0).x_0$, for some $\mathcal{V}_0 \subset \g_{\X}$ neighborhood of $0$. Since $l_k$ is relatively compact in $P$, we can choose a smaller neighborhood $\mathcal{V}_1$ such that for all $k$, $\Ad(l_k)\mathcal{V}_1 \subset \mathcal{V}_0$. Hence, $(l_k \exp(\mathcal{V}_1)).x_0 = \exp(\Ad(l_k)\mathcal{V}_1).x_0 \subset V_0$. Let $V_1 = \exp(\mathcal{V}_1).x_0$.

Let $K' = l'K \subset W_0$. Let $K'' \subset W_0$ be a compact subset and $k_1$ be such that for all $k \geq k_1$, $l_k' K = (l_k'l'^{-1}) K' \subset K''$. Let $\mathcal{K}'' \subset \n_-$ be such that $K'' = \exp(\mathcal{K}'')x_0$ and let $k_2$ such that $\Ad(a_k)\mathcal{K}'' \subset \mathcal{V}_1$ for all $k \geq k_2$, so $a_kK'' \subset V_1$.

For $k \geq \max(k_0,k_1,k_2)$, we get $g_k.K = e^{X_k} l_ka_k l_k' K   \subset e^{X_k} l_ka_k K'' 
 \subset e^{X_k}l_k V_1 \subset V$, completing the proof of Lemma \ref{lem:gk}.
\end{proof}

\subsection{Maximal chat at $\tilde{x}$}

We remind that we are considering a uniformly regular Lyapunov data $(V,(\gk),(T_k))$ at a point $\tilde{x} \in \tilde{M}$ and that we note $x_0 = \Db(\tilde{x})$, $g_k = \rho(\gk)$ and $W = \Db(V)$. Since $(W,(g_k),(T_k))$ is a uniformly regular Lyapunov data at $x_0$, we consider $W_{\max} \subset \X$ the open set given by Lemma \ref{lem:gk}. Restricting $V$ if necessary, we assume $\bar{W} \subset W_{\max}$.

Consider for $k \geq 0$ the open neighborhood $V_k = \gk^{-1}V \subset \tilde{M}$ of $\tilde{x}$. By equivariance, $\Db$ is injective in restriction to $V_k$. Also, since $\{\gk V\} \rightarrow \{\tilde{x}\}$, we may assume $\gk V \subset V$ for all $k$, and then $V \subset V_k$ for all $k$.  We introduce now 
\begin{equation*}
V_{\infty} = D^{-1}(W_{\max}) \cap \bigcup_{k \geq 0} \bigcap_{l \geq k} V_l.
\end{equation*}
\begin{claim}
$V_{\infty}$ is a maximal chart containing $\tilde{x}$ and such that $D(V_{\infty}) = W_{\max}$.
\end{claim}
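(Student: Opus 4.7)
My plan is to verify in turn four properties of $V_\infty$: (i) $\tilde{x}\in V_\infty$; (ii) $V_\infty$ is open; (iii) $\pi$, $\D$ and $\Db$ are injective on $V_\infty$; and (iv) $\Db(V_\infty)=W_{\max}$. Combined with the connectedness of $V_\infty$ that will drop out of the argument and the simple-connectedness of $W_{\max}$ (diffeomorphic to $\R^n$ or $\R^{p,q}$), point (iv) will give that $\D(V_\infty)$ is a single connected lift of $W_{\max}$ in $\tilde{\X}$, namely an hemisphere of $\S^n$ or a Minkowski patch of $\tilde{\Ein}^{p,q}$, which is the remaining piece of the definition of a maximal chart. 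Point (i) will be immediate from $\tilde{x}\in V\subset V_l$ for every $l$ (since $\tilde{\gamma}_l V\subset V$) and $\Db(\tilde{x})=x_0\in W_{\max}$. For (iii) I would first observe that each $V_k=\tilde{\gamma}_k^{-1}V$ is a domain of injectivity of $\pi$, $\D$ and $\Db$ via the intertwining relations $\pi\circ\tilde{\gamma}_k=\gamma_k\circ\pi$, $\D\circ\tilde{\gamma}_k=\tilde{\rho}(\tilde{\gamma}_k)\circ\D$, $\Db\circ\tilde{\gamma}_k=g_k\circ\Db$ together with injectivity on $V$ (which may be assumed by shrinking $V$ at the start); any two points of $V_\infty$ lie in a common $V_k$ for $k$ large enough, so injectivity on $V_\infty$ will follow.

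Points (ii) and (iv) I plan to handle by the same mechanism: produce local inverses of $\Db$ on subsets of $W_{\max}$ via the contraction of $(g_k)$, then kill the $k$-dependence through uniqueness of continuation of sections of a local diffeomorphism. For openness, given $y\in V_\infty$ with $\Db(y)=x\in W_{\max}$, I would pick a connected open neighborhood $U\ni x$ with $\bar U\subset W_{\max}$; by Lemma \ref{lem:gk}, for all $k\ge K$ large, $g_k\bar U\subset W=\Db(V)$, and $s_k:=\tilde{\gamma}_k^{-1}\circ(\Db|_V)^{-1}\circ g_k|_U$ is a continuous section of $\Db$ over $U$ landing in $V_k$. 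Each $s_k$ sends $x$ to $y$, and since two continuous local sections of a local diffeomorphism that agree at a point of a connected domain must coincide everywhere on it, $s_k=s_K$ on $U$ for every $k\ge K$. The common image $\tilde U$ is then an open neighborhood of $y$ contained in $V_k$ for every $k\ge K$ and with $\Db(\tilde U)=U\subset W_{\max}$, hence contained in $V_\infty$.

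For (iv), $\Db(V_\infty)\subset W_{\max}$ is built into the definition. Conversely, given $x\in W_{\max}$, I would pick a path $\alpha\colon[0,1]\to W_{\max}$ from $x_0$ to $x$, which exists because $W_{\max}\simeq\R^n$ or $\R^{p,q}$. For $k$ large, $g_k\alpha([0,1])\subset W$ by Lemma \ref{lem:gk}, so $\tilde{\alpha}_k:=\tilde{\gamma}_k^{-1}\circ(\Db|_V)^{-1}\circ g_k\circ\alpha$ is a continuous path in $V_k$ with $\Db\circ\tilde{\alpha}_k=\alpha$; the identity $\Db(\tilde{\gamma}_k\tilde{x})=g_kx_0$ together with injectivity of $\Db|_V$ yields $\tilde{\alpha}_k(0)=\tilde{x}$. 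By the standard uniqueness of path lifts through a local diffeomorphism with prescribed origin, the $\tilde{\alpha}_k$ will coincide for all large $k$, and their common endpoint $y:=\tilde{\alpha}_k(1)$ will lie in $V_l$ for every large $l$ and satisfy $\Db(y)=x$; hence $y\in V_\infty$. The same construction also shows that $V_\infty$ is path-connected. The main obstacle here is the a priori $G_\delta$-like nature of $\bigcap_{l\ge k}V_l$: the sequence $(V_l)$ is not nested in general, so neither openness of $V_\infty$ nor the fact that $V_\infty$ fibers nicely over $W_{\max}$ can be read directly off the definition. Uniqueness of section continuation for a local diffeomorphism is precisely what surmounts this, by forcing the candidate local inverses $s_k$ and path lifts $\tilde{\alpha}_k$ to stabilize once $k$ is large.
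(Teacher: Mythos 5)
Your proof is correct and follows essentially the same strategy as the paper: both rest on Lemma \ref{lem:gk} to push compact subsets of $W_{\max}$ into $W$ for large $k$, and then on a uniqueness-of-continuation principle for the local diffeomorphism $D$ to make the resulting preimages independent of $k$ — your section/path-lifting formulation is just a repackaging of the paper's use of Lemma \ref{lem:assiettes_emboitees} applied to connected open sets containing $V$. Your explicit final step, that injectivity of $D$ on $V_{\infty}$ together with $D(V_{\infty})=W_{\max}$ forces $\D(V_{\infty})$ to be a single (hemisphere or Minkowski-patch) lift of $W_{\max}$, addresses a point the paper leaves implicit.
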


\begin{proof}
The injectivity of $D$ in restriction to $V_{\infty}$ is immediate as for any two points in $V_{\infty}$, there is $k \geq 0$ such that they both belong to $V_k$. 

To see that it is open, let us prove that for all $k \geq 0$, every $\tilde{y} \in \Db^{-1}(W_{\max}) \cap \bigcap_{l\geq k} V_l$ admits a neighborhood contained in $\Db^{-1}(W_{\max}) \cap \bigcap_{l \geq k'} V_l$, for some $k'$. Let $y_0 = \Db(\tilde{y})$. By definition, $y_0 \in W_{\max}$ and for all $l\geq k$, $\gl \tilde{y} \in V$. We then choose a connected open neighborhood $V_0$ of $\tilde{y}$ such that $\overline{D(V_0)} \subset W_{\max}$. By Lemma \ref{lem:gk}, there is $k'$ such that for all $l \geq k'$, $g_l\overline{D(V_0)} \subset W$. Consequently, $\Db(\gl V_0) \subset W$ for $l \geq k'$ and $\gl V_0 \cap V \neq \emptyset$ if $l \geq k$.

Since $D$ is injective on $V$, Lemma \ref{lem:assiettes_emboitees} implies that for $l \geq \max(k,k')$, we have $\gl V_0 \subset V$, \textit{i.e.} $V_0 \subset \bigcap_{l \geq \max(k,k')} V_l$, and then $V_0 \subset V_{\infty}$ proving that the latter is open.

Let us prove now that $\Db(V_{\infty}) = W_{\max}$. Let $Z \subset W_{\max}$ be a connected open subset such that $W \subset \overline{Z} \subset W_{\max}$. There exits $k_0 \geq 0$ such that $g_k.\overline{Z} \subset W$ for all $k \geq k_0$. So, $Z \subset \Db(V_k)$ for all $k \geq k_0$. Consider now $V_{k,Z} = (\Db|_{V_k})^{-1}(Z)$ which is well defined since $\Db$ is injective in restriction to $V_k$. Note that $V_{k,Z}$ is connected. We claim that $V_{k,Z} = V_{l,Z}$ for all $k,l \geq k_0$. Indeed, $\Db(\gk V_{l,Z}) =
 g_k Z \subset W$. By Lemma \ref{lem:assiettes_emboitees}, we get $\gk V_{l,Z} \subset V$ because $V \subset V_{l,Z}$ implies $V \cap \gk V_{l,Z} \neq \emptyset$, and then $V_{l,Z} \subset V_k$. Thus, $V_{l,Z} = V_{k,Z}$ since $\Db(V_{l,Z}) = Z$.

Consequently, $V_{k_0,Z} \subset \bigcap_{k \geq k_0} V_k$ and $\Db(V_{k_0,Z}) = Z$. Thus, $Z \subset \Db(V_{\infty})$, and this for all connected, relatively compact, open subset $Z \subset W_{\max}$, proving $W_{\max} \subset \Db(V_{\infty})$.

Finally, for all $k$, the covering $\pi : \tilde{M} \rightarrow M$ is injective in restriction to $V_k$ since $\pi(\gk^{-1} \tilde{y}) = \pi(\gk^{-1} \tilde{z})$ implies $\gamma_k \pi(\tilde{y}) = \gamma_k \pi(\tilde{z})$ and since $\pi|_{V}$ is injective. The same argument as for the injectivity of $\Db|_{V_{\infty}}$ then gives the injectivity of $\pi|_{V_{\infty}}$, proving the claim: $U_{\infty} = \pi(V_{\infty})$ is a maximal chart at $x$.
\end{proof}

\subsection{Conclusion}

We can conclude the proof of Proposition \ref{prop:atlas}. Let $y \in M$. Applying Lemma \ref{lem:localisation_lyapunov_data} to $\bar{\Gamma.y}$ yields a Lyapunov regular data $(U,(\gamma_k),(T_k))$ at a point $x \in \bar{\Gamma.y}$. This section has proved that there exists a maximal chart $U_{\infty}$ that contains $x$. If $\gamma \in \Gamma$ be such that $\gamma . y \in U_{\infty}$, then $\gamma^{-1} U_{\infty}$ is a maximal chart containing $y$, and the proof of Proposition \ref{prop:atlas} is complete.

\section{Injectivity of the developing map}
\label{s:injectivity}

In this section, we prove Theorem \ref{thm:atlas}. As explained in the introduction, combined with Proposition \ref{prop:atlas}, Proposition \ref{prop:projective_flat} below and Theorem 1 of \cite{pecastaing_conformal_lattice}, this will conclude the proof of Theorem \ref{thm:projective} and Theorem \ref{thm:conform}. We remind that we have fixed $\D : \tilde{M} \rightarrow \tilde{\X}$ a developing map, with holonomy $\tilde{\rho} : \Aut(\tilde{M}) \rightarrow \Aut(\tilde{\X})$. Assuming that every point of $\tilde{M}$ is contained in a maximal chart, we claim that is enough to prove that $\D$ is injective to get the conclusion. 

Indeed, if $V \subset \tilde{M}$ is a maximal chart, then $\gamma V \cap V = \emptyset$ for any non-trivial $\gamma \in \pi_1(M)$ by definition. By injectivity of $\tilde{\rho}$ and $\D$, the $\tilde{\rho}(\gamma) \D(V)$, $\gamma \in \pi_1(M)$ are pairwise disjoint. By definition, $\D(V)$ is an hemisphere of $\S^n$ in the projective case, and a Minkowski patch of $\Ein^{p,q}$ in the conformal one. Consequently $|\pi_1(M)| \leq 2$, $\tilde{M}$ is compact and $\D$ is a diffeomorphism. The conclusion follows directly.

\vspace{.2cm}

So, Theorem \ref{thm:atlas} is reduced to the proof of the injectivity of $\D$, which we establish in this section.

\subsection{Common principle}
\label{ss:injectivity:common_principle}

Let $(V_m)$ be a covering of $\tilde{M}$ by pairwise distinct maximal charts such that for all $m \geq 1$, $V_{m+1}$ intersects $\cup_{k \leq m} V_k$. We remind that the $V_i$'s are relatively compact in $\tilde{M}$ by Proposition \ref{prop:relative_compactness}. If for all $m \geq 1$, $\D(V_{m+1}) \cap (\D(V_1) \cup \cdots \cup \D(V_m))$ is connected, then, using Lemma \ref{lem:intersection_connexe}, we get by induction that $\D$ is injective in restriction to $V_1 \cup \cdots \cup V_m$ for all $m$, \textit{i.e.} that $\D$ is injective.

So, let us assume that there exists $m$ such that $\D(V_{m+1}) \cap (\D(V_1) \cup \cdots \cup \D(V_m))$ is not connected, and let us choose the smallest one. Then, by the same argument as above, we get that $\D$ is injective in restriction to $V_1 \cup \cdots \cup V_m$. Note that $m \geq 2$ by construction. 

We pick a chart $\varphi : \D(V_{m+1}) \rightarrow \R^n$ which is either an affine chart in the projective case, or a stereographic projection in the conformal case, and we note for $1 \leq i \leq m$, $U_i = \varphi(\D(V_i) \cap \D(V_{m+1}))$.

\begin{claim}
The $U_i$'s are pairwise distinct.
\end{claim}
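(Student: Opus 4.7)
The plan is to argue by contradiction. Suppose $U_i = U_j$ for some $1 \leq i < j \leq m$. Since $\varphi$ is a homeomorphism of $\D(V_{m+1})$ onto $\R^n$, the equality $U_i = U_j$ is equivalent to $\D(V_i) \cap \D(V_{m+1}) = \D(V_j) \cap \D(V_{m+1})$.

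The next step is to apply a ``three-chart'' uniqueness principle. In the conformal case this is exactly Lemma \ref{lem:same_intersection} applied to the Minkowski patches $\D(V_i),\D(V_j),\D(V_{m+1}) \subset \tilde{\Ein}^{p,q}$, which directly yields $\D(V_i) = \D(V_j)$. In the projective case, an elementary analog for hemispheres of $\S^n$ delivers the same conclusion: two hemispheres $H_1, H_2$ meeting $H_3$ in the same set must coincide, because reading off $\varphi(H_1 \cap H_3) = \varphi(H_2 \cap H_3)$ as an affine half-space in $\R^n$ recovers the bounding great hypersphere, hence the hemisphere itself. The empty-intersection case is handled in both settings by the fact that there is at most one maximal chart in $\tilde{\X}$ disjoint from $\D(V_{m+1})$, namely $\iota(\D(V_{m+1}))$ (resp.\ the antipodal hemisphere $-\D(V_{m+1})$).

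Once $\D(V_i) = \D(V_j)$ is established, I invoke the injectivity of $\D$ on $V_1 \cup \cdots \cup V_m$ that was just obtained from the minimality of $m+1$. Since both $V_i$ and $V_j$ lie in this union, injectivity promotes the equality of images to the equality of open sets $V_i = V_j$, contradicting the assumption that the $V_k$ are pairwise distinct.

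The only delicate point I anticipate is the empty-intersection corner case, which must be handled lest two distinct maximal charts $V_i, V_j$ both fail to meet $V_{m+1}$ through $\D$; this is why I need the full statement of Lemma \ref{lem:same_intersection} (covering $M_1 \cap M_3 = M_2 \cap M_3 = \emptyset$) rather than only its generic case, together with the corresponding uniqueness of the antipodal hemisphere in $\S^n$.
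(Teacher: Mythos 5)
Your argument is correct and takes essentially the same route as the paper: the key step in both is that a hemisphere (resp.\ a Minkowski patch) is determined by its intersection with a fixed one (the half-space bijection for $\S^n$, Lemma \ref{lem:same_intersection} for $\tilde{\Ein}^{p,q}$), which upgrades $U_i=U_j$ to $\D(V_i)=\D(V_j)$, and then the injectivity of $\D$ on $V_1\cup\cdots\cup V_m$ forces $V_i=V_j$. Your explicit handling of the empty-intersection case (at most one maximal chart disjoint from $\D(V_{m+1})$, namely the antipodal one) is a detail the paper leaves implicit, and it is correct.
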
 

Indeed, if $U_i = U_j$, then $\D(V_i) = \D(V_j)$ because an hemisphere (resp. a Minkowski patch) is determined by its intersection with a given hemisphere (resp. Minkowski patch). By injectivity of $\D$ on $V_1 \cup \ldots \cup V_m$, this implies $V_i=V_j$ and then $i=j$ by choice of $(V_i)$.

\vspace{.2cm}

We then classify the configurations in which a family of such open subsets of $\R^n$ can have non-connected union. Finally we prove that in such configurations, if one of the $V_i$'s is thickened (see Section \ref{ss:thickenings}), then Lemma \ref{lem:intersection_connexe} applies and yields an open set $V \subset \tilde{M}$ in restriction to which $\D$ is injective and such that $\D(V) = \tilde{\X}$, proving that $V = \tilde{M}$ by Lemma \ref{lem:assiettes_emboitees}, and completing the proof of the injectivity of $\D$ in this \textit{a priori} problematic situation.

\subsection{Projective case}
\label{ss:injectivity:projective_case}

For $\X = \R P^n$, $\D$ sends maximal charts onto hemispheres of $\S^n$. We will use the basic facts recalled below. Let $\iota : \S^n \rightarrow \S^n$ be the antipodal map. 

\subsubsection{Some conventions and facts on hemispheres}

We embed $\S^n \subset \R^{n+1}$ in the standard way. A hemisphere $H \subset \S^n$ is the data of a half-line $\R_{>0} \ell$, for $\ell \in (\R^{n+1})^*$ such that $H = \S^n \cap \{\ell > 0\}$.

Given two hemispheres $H_0$ and $H_1$, and an affine chart $\varphi : H_0 \rightarrow \R^n$, if $H_0$ and $H_1$ are not equal or antipodal, then $\varphi(H_0 \cap H_1)$ is an affine half-space of $\R^n$. This gives a bijection between the set of hemisphere minus $\{H_0,\iota(H_0)\}$ and the set of affine half-spaces of $\R^n$.

We will use later the following facts which can be easily observed in coordinates.

\begin{fact}
\label{fact:3hemispheres}
Let $H_0,H_1,H_2$ be three hemispheres such that $H_1 \neq \iota(H_2)$. If $H_0 \cap H_1$ and $H_0 \cap H_2$ are disjoint, then $\iota(H_0) \subset H_1 \cup H_2$.
\end{fact}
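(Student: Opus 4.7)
My plan is to translate hemispheres into linear functionals on $\R^{n+1}$ and reduce the claim to a three-line convex-geometry argument. Write $H_i = \{x \in \S^n : \langle v_i, x\rangle > 0\}$ for some nonzero $v_i \in \R^{n+1}$, so that $\iota(H_i) = \{x \in \S^n : \langle v_i, x\rangle < 0\}$ and the hypothesis $H_1 \neq \iota(H_2)$ becomes $v_1 \notin \R_{>0}\,(-v_2)$. The disjointness of $H_0 \cap H_1$ and $H_0 \cap H_2$ is exactly $H_0 \cap H_1 \cap H_2 = \emptyset$, i.e.\ no $x \in \R^{n+1} \setminus \{0\}$ satisfies $\langle v_i, x\rangle > 0$ simultaneously for $i=0,1,2$.

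By Gordan's theorem of the alternative, this empty intersection is equivalent to the existence of nonnegative reals $\lambda_0,\lambda_1,\lambda_2$, not all zero, with $\lambda_0 v_0 + \lambda_1 v_1 + \lambda_2 v_2 = 0$. The key step is to show $\lambda_0 > 0$. Otherwise, $\lambda_1 v_1 + \lambda_2 v_2 = 0$ with the $\lambda_i$ not both zero; since $v_1,v_2 \neq 0$, both $\lambda_i$ must be strictly positive, which forces $v_1 \in \R_{>0}\,(-v_2)$, contradicting the hypothesis. Dividing by $\lambda_0$ then produces a relation $v_0 = -\alpha v_1 - \beta v_2$ with $\alpha, \beta \geq 0$.

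To conclude, take any $x \in \iota(H_0)$, so $\langle v_0, x\rangle < 0$; substituting the above relation gives $\alpha \langle v_1, x\rangle + \beta \langle v_2, x\rangle > 0$. Since $\alpha, \beta \geq 0$, at least one of $\langle v_1, x\rangle$ or $\langle v_2, x\rangle$ must be strictly positive, whence $x \in H_1 \cup H_2$. The only real subtlety is ruling out $\lambda_0 = 0$, and the hypothesis $H_1 \neq \iota(H_2)$ is tailor-made for this: without it one could take $H_1 = \iota(H_2)$ disjoint from $H_0$, and then a whole hemisphere's worth of $\iota(H_0)$ would miss $H_1 \cup H_2$.
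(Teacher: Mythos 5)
Your proof is correct, and it is essentially the argument the paper has in mind: the paper states this fact without proof (``easily observed in coordinates''), and in its proof of Fact \ref{fact:thickened_hemisphere2} it records the same key relation $-\ell_0 \in \Conv(\ell_1,\ell_2)$ that you derive in the form $v_0 = -\alpha v_1 - \beta v_2$ with $\alpha,\beta \geq 0$. Your use of Gordan's theorem to pass from the empty triple intersection to that conic combination, and of the hypothesis $H_1 \neq \iota(H_2)$ to force $\lambda_0 > 0$, correctly supplies the details the paper leaves to the reader.
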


\begin{fact}
\label{fact:equator}
Let $H_0,H_1,H_2$ be three hemispheres and let $\varphi : H_0 \rightarrow \R^n$ be an affine chart. Assume that $\varphi(H_0 \cap H_1)$ and $\varphi(H_0 \cap H_2)$ are parallel and in the same direction, that is there exists $\phi \in (\R^n)^*$, $\alpha_1,\alpha_2 \in \R$ such that $\varphi(H_0 \cap H_i) = \{\phi > \alpha_i\}$ for $i =1,2$. Then, $H_1 \cap \partial H_0 = H_2 \cap \partial H_0$.
\end{fact}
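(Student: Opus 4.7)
The plan is to prove Fact~\ref{fact:equator} by a direct coordinate calculation on $\S^n \subset \R^{n+1}$. First, I would pick linear coordinates so that $H_0 = \S^n \cap \{x_0 > 0\}$ and so that the affine chart $\varphi$ is the standard one, given by restriction to the affine slice $\{x_0 = 1\} \simeq \R^n$. In these coordinates, each hemisphere $H_i$, $i=1,2$, is described by a linear form $\ell_i$ on $\R^{n+1}$, and one decomposes $\ell_i = c_i x_0 + \phi_i$ where $\phi_i \in (\R^n)^*$ is the restriction of $\ell_i$ to the equatorial hyperplane $\{x_0 = 0\}$.

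Second, a direct computation gives $\varphi(H_0 \cap H_i) = \{y \in \R^n : \phi_i(y) > -c_i\}$ whenever $\phi_i \neq 0$; and $\phi_i$ is indeed nonzero as soon as $H_i \notin \{H_0, \iota(H_0)\}$, which we may freely assume (the statement is trivial otherwise). Translating the hypothesis, $\varphi(H_0 \cap H_i) = \{\phi > \alpha_i\}$ with $\phi \in (\R^n)^*$ the same form for $i = 1,2$ and in the same direction means that both $\phi_1$ and $\phi_2$ are \emph{positive} multiples of $\phi$; say $\phi_i = \mu_i \phi$ with $\mu_i > 0$.

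Third, observe that on the equator $\partial H_0 = \S^n \cap \{x_0 = 0\}$, the linear form $\ell_i$ agrees with its restriction $\phi_i$. Hence
\begin{equation*}
H_i \cap \partial H_0 \;=\; \{x \in \S^n \ :\ x_0 = 0, \ \phi_i(x_1,\ldots,x_n) > 0\}.
\end{equation*}
Since $\phi_1 = \mu_1 \phi$ and $\phi_2 = \mu_2 \phi$ with $\mu_1, \mu_2 > 0$, the two open subsets $\{\phi_1 > 0\}$ and $\{\phi_2 > 0\}$ of $\R^n$ coincide, and therefore $H_1 \cap \partial H_0 = H_2 \cap \partial H_0$, as claimed.

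There is no serious obstacle: the only point that needs care is the sign, namely that the \emph{same direction} hypothesis is what guarantees $\mu_1$ and $\mu_2$ have the same sign (and can be taken positive). If one only assumed parallelism without matching orientation, the intersections with the equator would be antipodal open half-equators in $\partial H_0$, rather than equal.
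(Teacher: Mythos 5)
Your computation is correct and is precisely the coordinate verification the paper alludes to when it asserts (without proof) that Fact \ref{fact:equator} ``can be easily observed in coordinates'': writing $H_i = \S^n \cap \{c_i x_0 + \phi_i > 0\}$, noting that the same-direction hypothesis forces $\phi_1,\phi_2$ to be positive multiples of a common form, and restricting to the equator $\{x_0=0\}$. The only small imprecision is the claim that a linear change of coordinates makes $\varphi$ the standard chart; what is actually true (and suffices) is that any two affine charts of $H_0$ differ by post-composition with an affine transformation of $\R^n$, which preserves the hypothesis and does not affect the conclusion.
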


Let $V \subset \tilde{M}$ be a relatively compact maximal chart, and let $H = \D(V)$. We have seen in Section \ref{ss:thickenings} that $\D$ is still injective on small enough neighborhoods of $\bar{V}$. In particular, if $\epsilon > 0$ is small enough, there is a neighborhood $V^{\epsilon}$ of $\bar{V}$ on which $\D$ is injective and such that $\D(V^{\epsilon}) = H^{\epsilon} := \S^n \cap \{\ell > -\epsilon\}$ where $\ell$ is such that $H = \S^n \cap \{\ell > 0\}$.

Affine charts are not well adapted to these thickenings $H^{\epsilon}$, it is more relevant to use stereographic projections even though no conformal structure is involved. 

Notably, if $x \notin H^{\epsilon}$, and if $s : \S^n \setminus \{x\} \rightarrow \R^n$ is a stereographic projection, then $s(H^{\epsilon})$ is a ball. The following fact is then clear.

\begin{fact}
\label{fact:thickned_hemisphere1}
Let $H_1,H_2$ be two hemispheres. If $\epsilon > 0$ is small enough, then $H_1^{\epsilon} \cap H_2$ is connected.
\end{fact}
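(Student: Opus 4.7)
The plan is to split into three cases according to the relative position of $H_1$ and $H_2$, reducing the generic case to the stereographic projection principle stated just above the Fact.

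If $H_1 = H_2$, then $H_1^{\epsilon} \cap H_2 = H_2$ is trivially connected. If $H_2 = \iota(H_1)$, I normalize $\ell_1 = -\ell_2$: a direct computation yields $H_1^{\epsilon} \cap H_2 = \{x \in \S^n : 0 < \ell_2(x) < \epsilon\}$, a one-sided tubular neighborhood of the equator $\partial H_2$ diffeomorphic to $S^{n-1} \times (0,\epsilon)$. Since the projective case is treated with $n \geq 2$, the factor $S^{n-1}$ is connected and so is the whole set.

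In the generic case, $H_1$ and $H_2$ are neither equal nor antipodal, and $\iota(H_1) \cap \iota(H_2) = \S^n \setminus \overline{H_1 \cup H_2}$ is a non-empty open set (the antipodal configuration being the only way for two closed hemispheres to cover $\S^n$). Shrinking $\epsilon$, the open set $\{\ell_1 < -\epsilon\} \cap \{\ell_2 < 0\}$ remains non-empty; I pick any $x$ in it, so that $x \notin \overline{H_1^{\epsilon}}$ and $x \notin \overline{H_2}$. A stereographic projection $s : \S^n \setminus \{x\} \to \R^n$ then sends both $H_1^{\epsilon}$ and $H_2$ to open balls of $\R^n$, by the principle recalled in the text. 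Consequently $s(H_1^{\epsilon} \cap H_2) = s(H_1^{\epsilon}) \cap s(H_2)$ is the intersection of two convex open subsets of $\R^n$, which is convex and therefore connected.

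The only mildly subtle step is the antipodal case, where no single stereographic projection handles both hemispheres at once and the hypothesis $n \geq 2$ enters directly; in all other configurations the problem is linearized into convex geometry via an appropriate stereographic chart.
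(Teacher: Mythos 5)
Your proof is correct and follows essentially the same route as the paper's: in the generic case you choose the projection point outside $\overline{H_1^{\epsilon}} \cup \overline{H_2}$ so that both sets become open balls whose intersection is convex, and the antipodal case is exactly where the hypothesis $n \geq 2$ enters. The only cosmetic difference is that in the antipodal case you describe $H_1^{\epsilon} \cap H_2$ directly as a collar $\S^{n-1} \times (0,\epsilon)$ of the equator, whereas the paper realizes it via a stereographic projection as the region between two nested balls; both reduce to the connectedness of a spherical shell.
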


\begin{proof}
If $H_2 = \iota(H_1)$, then we pick $x \notin H_1^{\epsilon}$ and fix a stereographic projection $s:  \S^n \setminus \{x\} \rightarrow \R^n$. Then, $s(H_2 \setminus \{x\})$ is the complement of a closed ball $\bar{B}_1$ and $s(H_1^{\epsilon})$ is another open ball $B_2$, that contains $\bar{B}_1$. So, $H_1^{\epsilon} \cap H_2$ is diffeomorphic to $B_2 \setminus \bar{B}_1$, which is connected.

If $H_2 \neq \iota(H_1)$, then for $\epsilon >0$ small enough, we can choose $x \notin H_1^{\epsilon} \cup H_2$. A stereographic projection defined on $\S^n \setminus \{x\}$ then sends $H_1^{\epsilon} \cap H_2$ onto the intersection of two balls in $\R^n$, which is connected.
\end{proof}

Finally, we will make use of the following.

\begin{fact}
\label{fact:thickened_hemisphere2}
Let $H_0,H_1,H_2$ be three hemispheres, with $H_1$ and $H_2$ not antipodal. Assume that $H_0 \cap (H_1 \cup H_2)$ is not connected. Then, for small enough $\epsilon$, $H_0^{\epsilon} \cap (H_1 \cup H_2)$ is connected. See Figure \ref{fig:3hemispheres}.
\end{fact}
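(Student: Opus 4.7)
The plan is to identify the obstruction to connectedness of $H_0 \cap (H_1 \cup H_2)$ as a concrete ``two parallel half-spaces facing away from each other'' configuration in an affine chart, and then to show that $H_1 \cap H_2$ actually reaches the equator $\partial H_0$ from the $\iota(H_0)$ side, so that an arbitrarily thin collar around $\partial H_0$ suffices to glue the two pieces together.

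First I would use an affine chart $\varphi : H_0 \to \R^n$. Each $\varphi(H_0 \cap H_i)$ is either empty, all of $\R^n$, or a proper affine half-space, hence always connected; the assumption that $H_0 \cap (H_1 \cup H_2)$ is disconnected therefore forces both images to be non-empty, proper, and disjoint, so they are bounded by parallel hyperplanes with opposite-pointing normals. (Equivalently, Fact~\ref{fact:3hemispheres} now gives $\iota(H_0) \subset H_1 \cup H_2$.) Choosing coordinates on $\S^n \subset \R^{n+1}$ with $H_0 = \{x_0 > 0\} \cap \S^n$ and writing $H_i = \{a_i x_0 + b_i \cdot x > 0\} \cap \S^n$ for $i = 1,2$, the disjointness condition in the chart translates, after rescaling, into $b_2 = -b_1$ together with $a_1 + a_2 \leq 0$; and the hypothesis $H_2 \neq \iota(H_1)$ rules out the equality, giving the strict inequality $a_1 + a_2 < 0$.

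Next I would exhibit points of $H_1 \cap H_2$ arbitrarily close to $\partial H_0$ from the $\iota(H_0)$ side: on the slice $\{x_0 = -\delta\} \cap \S^n$ with $0 < \delta < 1$ small, the two defining inequalities become $a_1 \delta < b_1 \cdot x < -a_2 \delta$, an interval of positive length $-(a_1+a_2)\delta$, so this slice meets $H_1 \cap H_2$. Consequently $H_0^{\epsilon} \cap H_1 \cap H_2$ is non-empty for every $\epsilon > 0$. To finish I would invoke Fact~\ref{fact:thickned_hemisphere1} (with the roles of $H_0$ and $H_1$, resp.\ $H_0$ and $H_2$, swapped): for $\epsilon$ small enough, both $H_0^{\epsilon} \cap H_1$ and $H_0^{\epsilon} \cap H_2$ are connected. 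Sharing a common point by the previous step, their union $H_0^{\epsilon} \cap (H_1 \cup H_2)$ is then connected.

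The main subtle point is securing the \emph{strict} inequality $a_1 + a_2 < 0$ — which is exactly the translation of the non-antipodal hypothesis on $(H_1,H_2)$ — since it is precisely this strictness that makes $H_1 \cap H_2$ truly cross $\partial H_0$ transversally rather than touch it only at the limit; without it the thickening would be powerless. Everything else is a routine geometric unpacking through the affine chart combined with Fact~\ref{fact:thickned_hemisphere1}.
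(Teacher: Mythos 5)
Your proof is correct and follows essentially the same route as the paper: both reduce, via Fact~\ref{fact:thickned_hemisphere1}, to showing that $H_0^{\epsilon} \cap H_1 \cap H_2$ is non-empty for small $\epsilon$, and both extract the needed strictness from the non-antipodality of $H_1$ and $H_2$. The only (immaterial) difference is how that non-emptiness is verified: you compute directly on the slices $\{x_0=-\delta\}$, while the paper locates a common boundary point $x \in \partial H_0 \cap \partial H_1 \cap \partial H_2$ and checks in a stereographic chart centred at $x$ that $H_1$ and $H_2$ intersect arbitrarily close to it.
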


\begin{proof}
By Fact \ref{fact:thickned_hemisphere1}, it is enough to prove that $H_0^{\epsilon} \cap H_1 \cap H_2$ is non-empty for small enough $\epsilon$. By assumption, $H_0 \cap H_1$ and $H_0 \cap H_2$ are disjoint. Let $\ell_0,\ell_1,\ell_2$ be linear forms defining $H_0,H_1,H_2$ respectively. Our assumption means that $\ell_1$ and $\ell_2$ are non-collinear and $-\ell_0 \in \Conv(\ell_1,\ell_2)$ (the open convex hull).

In particular, there is a point $x \in \cap_i \partial H_i$. Since $x \in H_0^{\epsilon}$, it is enough to observe that $H_1$ and $H_2$ intersect arbitrarily close to $x$. To see it, we pick $s : \S^n \setminus \{-x\} \rightarrow \R^n$ a stereographic projection such that $s(x) = 0$. Then, $H_1$ and $H_2$ are sent to half-spaces delimited by two distinct linear hyperplanes. It is then immediate that they intersect arbitrarily close to $0$.
\end{proof}

\subsubsection{Configurations for which the induction fails}

For all $i \leq m+1$, we note $\D(V_i)=H_i \subset \S^n$. As announced above, we consider the smallest integer $m$ such that $H_{m+1} \cap (H_1 \cup \ldots H_m)$ is not connected. Let $\varphi : H_{m+1} \rightarrow \R^n$ be an affine chart. For all $1 \leq i \leq m$, $U_i = \varphi(H_i \cap H_{m+1}) \subset \R^n$ is either empty, a half-space, or $\R^n$. 

We remind that $U_1, \ldots, U_m$ are pairwise distinct, in particular at most one of them is empty. Thus, there is $l \in \{m-1,m\}$, with $l \geq 2$, an injective map $\sigma : \{1,\ldots,l\} \rightarrow \{1,\ldots,m\}$, $\phi \in (\R^n)^*$, $k_0 \in \{1,\ldots,l-1\}$ and $\alpha_1 < \cdots < \alpha_{k_0} \leq \alpha_{k_0+1} < \cdots < \alpha_l$ such that $U_{\sigma(k)} = \{\phi < \alpha_k\}$ for $1 \leq k \leq k_0$, $U_{\sigma(k)} = \{\phi > \alpha_k\}$ for $k_0+1 \leq k \leq l$, and if $l = m-1$ and $i$ is the unique element not in the range of $\sigma$, $U_i = \emptyset$. We note $i_0 = \sigma(k_0)$ and $j_0 = \sigma(k_0+1)$.

\subsubsection{Case $\alpha_{k_0} = \alpha_{k_0+1}$}

In this situation, $H_{i_0}$ and $H_{j_0}$ are antipodal. Since $H_1 \cup \cdots \cup H_m$ is connected, it contains a point $x_0 \in \partial H_{i_0} = \partial H_{j_0}$. By injectivity of $\D$ in restriction to $V_1 \cup \cdots \cup V_m$, if $x \in V_1 \cup \cdots \cup V_m$ is the preimage of $x_0$, then $x \in \partial V_{i_0} \cap \partial V_{j_0}$, showing that $\partial V_{i_0} \cap \partial V_{j_0} \neq \emptyset$.

Now, Lemma \ref{lem:thickenings} gives a neighborhood $V_{i_0}^{\epsilon} \supset \bar{V_{i_0}}$ in restriction to which $\D$ is injective and such that $\D(V_{i_0}^{\epsilon}) = H_{i_0}^{\epsilon}$ for some $\epsilon > 0$. Then $V_{i_0}^{\epsilon} \cap V_{j_0} \neq \emptyset$ and $H_{i_0}^{\epsilon} \cap H_{j_0}$ is connected by Fact \ref{fact:thickned_hemisphere1}. By Lemma \ref{lem:intersection_connexe}, we get that $\D$ is injective in restriction to $V_{i_0}^{\epsilon} \cup V_{j_0}$, and $\D(V_{i_0}^{\epsilon} \cup V_{j_0}) = \S^n$.

Thus, we get that $V_{i_0}^{\epsilon} \cup V_{j_0} = \tilde{M}$ and that $\D$ is a diffeomorphism onto $\S^n$.

\subsubsection{Case $\alpha_{k_0} < \alpha_{k_0+1}$}

In this situation, we claim that $H_1 \cup \cdots \cup H_m = H_{i_0} \cup H_{j_0}$. Indeed, considering the partition $\S^n = H_{m+1} \cup \partial H_{m+1} \cup \iota(H_{m+1})$, we see first that $H_{m+1} \cap (H_1 \cup \cdots \cup H_m)$ and $H_{m+1} \cap (H_{i_0} \cup H_{j_0})$ coincide by assumption and choice of $k_0$. Then, by Fact \ref{fact:3hemispheres}, we get $\iota(H_{m+1}) \subset H_{i_0} \cup H_{j_0}$, proving in particular that $\iota(H_{m+1}) \cap (H_1 \cup \cdots \cup H_m)$ and $\iota(H_{m+1}) \cap (H_{i_0} \cup H_{j_0})$ also coincide. Finally, by Fact \ref{fact:equator}, we have $H_{\sigma(1)} \cap \partial H_{m+1} = \cdots = H_{\sigma(k_0)} \cap \partial H_{m+1}$ and $H_{\sigma(k_0+1)} \cap \partial H_{m+1} = \cdots = H_{\sigma(l)} \cap \partial H_{m+1}$, proving that $\partial H_{m+1} \cap (H_{i_0} \cup H_{j_0}) = \partial H_{m+1} \cap (H_1 \cup \cdots \cup H_m)$.

By injectivity of $\D$ in restriction to $V_1 \cup \cdots \cup V_m$, we deduce $V_{i_0} \cup V_{j_0} = V_1 \cup \cdots \cup V_m$. Therefore, $V_{i_0} \cap V_{j_0} \neq \emptyset$ and $V_{m+1} \cap (V_{i_0} \cup V_{j_0}) \neq \emptyset$ and $H_{m+1} \cap (H_{i_0} \cup H_{j_0})$ is not connected. 

Therefore, Fact \ref{fact:thickened_hemisphere2} and Lemma \ref{lem:thickenings} imply that $\bar{V_{m+1}}$ admits a neighborhood $V_{m+1}^{\epsilon}$ in restriction to which $\D$ is injective and such that $\D(V_{m+1}^{\epsilon}) \cap (\D(V_{i_0} \cup \D(V_{j_0}))$ is connected. Thus, we can apply Lemma \ref{lem:intersection_connexe} to obtain first that $\D$ is injective in restriction to $V_{i_0} \cup V_{j_0}$, and then in restriction to $V_{m+1}^{\epsilon} \cup V_{i_0} \cup V_{j_0}$. The image of this open subset of $\tilde{M}$ is $\S^n$, proving that $\tilde{M} = V_{m+1}^{\epsilon} \cup V_{i_0} \cup V_{j_0}$ and that $\D : \tilde{M} \rightarrow \S^n$ is a diffeomorphism.

\begin{figure}
\begin{tikzpicture}[line cap=round,line join=round,>=triangle 45,x=1cm,y=1cm,scale=.4]

\clip(-15,-10) rectangle (14,10);

\draw [line width=1pt] (-7.78,-2.76) circle (5.35372767331324cm); 

\node[scale=1] at (-7.7,3.2) {$H_{m+1}$};

\draw [shift={(-7.78,-2.76)},line width=1pt]  plot[domain=-3.141592653589793:0,variable=\t]({0.9999721553176327*5.354550076790271*cos(\t r)+-0.007462478771026879*1.9540211495656878*sin(\t r)},{0.007462478771026879*5.354550076790271*cos(\t r)+0.9999721553176327*1.9540211495656878*sin(\t r)}); 

\draw [shift={(-7.78,-2.76)},line width=1pt,dashed]  plot[domain=0:3.141592653589793,variable=\t]({0.9999721553176327*5.354550076790271*cos(\t r)+-0.007462478771026879*1.9540211495656878*sin(\t r)},{0.007462478771026879*5.354550076790271*cos(\t r)+0.9999721553176327*1.9540211495656878*sin(\t r)}); 

\node[scale=1] at (-13.7,-4.5) {$H_{i_0}$};


\draw [shift={(-7.78,-2.76)},line width=1pt]  plot[domain=-3.141592653589793:0,variable=\t]({0.8644337869842375*5.3499149024909105*cos(\t r)+-0.502746683649022*1.4797465578484086*sin(\t r)},{0.502746683649022*5.3499149024909105*cos(\t r)+0.8644337869842375*1.4797465578484086*sin(\t r)});

\draw [shift={(-7.78,-2.76)},line width=1pt,dashed]  plot[domain=0:3.141592653589793,variable=\t]({0.8644337869842375*5.3499149024909105*cos(\t r)+-0.502746683649022*1.4797465578484086*sin(\t r)},{0.502746683649022*5.3499149024909105*cos(\t r)+0.8644337869842375*1.4797465578484086*sin(\t r)}); 

\draw [shift={(-7.78,-2.76)},line width=1pt]  plot[domain=-3.141592653589793:0,variable=\t]({0.8070088429497047*5.348325122754983*cos(\t r)+0.590539352965557*1.8915022288039713*sin(\t r)},{-0.590539352965557*5.348325122754983*cos(\t r)+0.8070088429497047*1.8915022288039713*sin(\t r)});

\draw [shift={(-7.78,-2.76)},line width=1pt,dashed]  plot[domain=0:3.141592653589793,variable=\t]({0.8070088429497047*5.348325122754983*cos(\t r)+0.590539352965557*1.8915022288039713*sin(\t r)},{-0.590539352965557*5.348325122754983*cos(\t r)+0.8070088429497047*1.8915022288039713*sin(\t r)});

\node[scale=1] at (-1.7,-5) {$H_{j_0}$};

\draw [line width=1pt] (8,-2.76) circle (5.35372767331324cm);
\draw [shift={(8,-2.76)},line width=.3pt]  plot[domain=-3.141592653589793:0,variable=\t]({0.9999721553176327*5.354550076790271*cos(\t r)+-0.007462478771026879*1.9540211495656878*sin(\t r)},{0.007462478771026879*5.354550076790271*cos(\t r)+0.9999721553176327*1.9540211495656878*sin(\t r)});
\draw [shift={(8,-2.76)},line width=.3pt,dashed]  plot[domain=0:3.141592653589793,variable=\t]({0.9999721553176327*5.354550076790271*cos(\t r)+-0.007462478771026879*1.9540211495656878*sin(\t r)},{0.007462478771026879*5.354550076790271*cos(\t r)+0.9999721553176327*1.9540211495656878*sin(\t r)});
\draw [shift={(8,-2.76)},line width=1pt]  plot[domain=-3.141592653589793:0,variable=\t]({0.8644337869842375*5.3499149024909105*cos(\t r)+-0.502746683649022*1.4797465578484086*sin(\t r)},{0.502746683649022*5.3499149024909105*cos(\t r)+0.8644337869842375*1.4797465578484086*sin(\t r)});
\draw [shift={(8,-2.76)},line width=1pt,dashed]  plot[domain=0:3.141592653589793,variable=\t]({0.8644337869842375*5.3499149024909105*cos(\t r)+-0.502746683649022*1.4797465578484086*sin(\t r)},{0.502746683649022*5.3499149024909105*cos(\t r)+0.8644337869842375*1.4797465578484086*sin(\t r)});
\draw [shift={(8,-2.76)},line width=1pt]  plot[domain=-3.141592653589793:0,variable=\t]({0.8070088429497047*5.348325122754983*cos(\t r)+0.590539352965557*1.8915022288039713*sin(\t r)},{-0.590539352965557*5.348325122754983*cos(\t r)+0.8070088429497047*1.8915022288039713*sin(\t r)});
\draw [shift={(8,-2.76)},line width=1pt,dashed]  plot[domain=0:3.141592653589793,variable=\t]({0.8070088429497047*5.348325122754983*cos(\t r)+0.590539352965557*1.8915022288039713*sin(\t r)},{-0.590539352965557*5.348325122754983*cos(\t r)+0.8070088429497047*1.8915022288039713*sin(\t r)});
\draw [shift={(8.01,-3.3460197833054655)},line width=1pt,dashed]  plot[domain=-0.0578405920835543:3.1788888063164764,variable=\t]({0.9999721553176328*5.316620430281443*cos(\t r)+-0.007462478771026865*1.9764514282475778*sin(\t r)},{0.007462478771026865*5.316620430281443*cos(\t r)+0.9999721553176328*1.9764514282475778*sin(\t r)});
\draw [shift={(8.01,-3.3460197833054655)},line width=1pt]  plot[domain=3.1788888063164764:6.225344715096032,variable=\t]({0.9999721553176328*5.316620430281443*cos(\t r)+-0.007462478771026865*1.9764514282475778*sin(\t r)},{0.007462478771026865*5.316620430281443*cos(\t r)+0.9999721553176328*1.9764514282475778*sin(\t r)});

\node[scale=.8] at (1.6,-2.8) {$\ell=0$};
\node[scale=.8] at (1.4,-3.6) {$\ell = -\epsilon$};
\node[scale=1] at (8.2,3.2) {$H_{m+1}^{\epsilon}$};
\end{tikzpicture}

\caption{Configuration for $\alpha_{k_0} < \alpha_{k_0+1}$}
\label{fig:3hemispheres}
\end{figure}
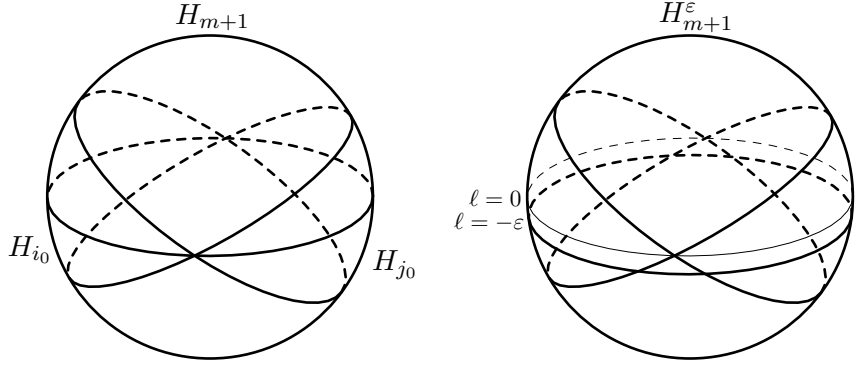

\subsection{Conformal case} 
\label{ss:injectivity:conformal_case}

For $\X = \Ein^{p,q}$, $\D$ sends maximal charts to Minkowski patches of $\tilde{\Ein}^{p,q}$. For $1 \leq i \leq m+1$, we note $M_i = \D(V_i)$. We remind that $m$ is assumed to be the smallest integer such that $M_{m+1} \cap (M_1 \cup \cdots \cup M_m)$ is not connected. We note $\iota : \tilde{\Ein}^{p,q} = \S^p \times \S^q \rightarrow \tilde{\Ein}^{p,q}$ the product of the antipodal maps.

\subsubsection{Facts about Minkowski patches}

\begin{fact}
\label{fact:iota_m0}
Let $M_0,M_1,M_2$ be three Minkowski patches such that $M_0 \cap M_1$ and $M_0 \cap M_2$ are degenerate half-spaces of $M_0$, and $M_1 \neq \iota(M_2)$. If $M_0 \cap M_1$ and $M_0 \cap M_2$ are disjoint, then $\iota(M_0) \subset M_1 \cup M_2$.
\end{fact}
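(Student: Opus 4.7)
The plan is to transport the problem to $\R^{p,q}$ via a stereographic projection $s : M_0 \to \R^{p,q}$ and its antipodal twin $s' = s \circ \iota : \iota(M_0) \to \R^{p,q}$. Since $M_0 \cap M_i$ is by assumption a degenerate half-space of $M_0$, Observation \ref{obs:intersection} forces $s(M_0 \cap M_i) = H_{v_i, \alpha_i}$ for some isotropic $v_i \in \mathcal{C} \setminus \{0\}$ and $\alpha_i \in \R$, $i=1,2$.

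The disjointness of $M_0 \cap M_1$ and $M_0 \cap M_2$ translates, via $s$, into $H_{v_1, \alpha_1} \cap H_{v_2, \alpha_2} = \emptyset$, so Fact \ref{fact:disjoint_open_sets} yields (after the usual normalization of the defining vectors) $v_2 = -v_1$ and $\alpha_1 \geq -\alpha_2$. To promote this to a strict inequality, I would apply Lemma \ref{lem:same_intersection}: it identifies $s(M_0 \cap \iota(M_2))$ with the complement of $H_{v_2, \alpha_2}$ in $\R^{p,q} \setminus \{b(v_2, \cdot) = \alpha_2\}$, that is with $H_{-v_2, -\alpha_2} = H_{v_1, -\alpha_2}$. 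Since the intersection with $M_0$ uniquely determines a Minkowski patch (same lemma), $M_1 = \iota(M_2)$ would be equivalent to $H_{v_1, \alpha_1} = H_{v_1, -\alpha_2}$, i.e.\ $\alpha_1 = -\alpha_2$. The standing hypothesis $M_1 \neq \iota(M_2)$ therefore upgrades the inequality to $\alpha_1 > -\alpha_2$.

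The final step is a direct computation using Remark \ref{rem:trace_sur_lantipodal}, which gives $s'(\iota(M_0) \cap M_i) = H_{-v_i, -\alpha_i}$ for $i = 1, 2$. Unwrapping the definitions,
\[
s'\bigl(\iota(M_0) \cap (M_1 \cup M_2)\bigr) = \{v \in \R^{p,q} : b(v_1, v) < \alpha_1\} \cup \{v \in \R^{p,q} : b(v_1, v) > -\alpha_2\},
\]
and the strict inequality $\alpha_1 > -\alpha_2$ makes this union equal to $\R^{p,q}$. Since $s'$ is a bijection onto $\R^{p,q}$, this yields $\iota(M_0) \subset M_1 \cup M_2$, as desired. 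The delicate point—and the only place where the hypothesis $M_1 \neq \iota(M_2)$ is actually used—is this upgrade from $\alpha_1 \geq -\alpha_2$ to $\alpha_1 > -\alpha_2$; without it, the union $M_1 \cup M_2$ would fail to contain $\iota(M_0)$ along the degenerate hyperplane $\{b(v_1, \cdot) = \alpha_1\}$, which is precisely the obstruction arising when $M_1 = \iota(M_2)$.
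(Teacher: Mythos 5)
Your proof is correct and follows essentially the same route as the paper: transport via a stereographic projection $s$ on $M_0$, use disjointness plus Fact \ref{fact:disjoint_open_sets} to pin down the two half-spaces, and apply Remark \ref{rem:trace_sur_lantipodal} to see that their traces on $\iota(M_0)$ cover everything. If anything, you are more explicit than the paper, which simply asserts the strict inequality (its ``$\alpha > \beta$'') without spelling out that it comes from $M_1 \neq \iota(M_2)$ via Lemma \ref{lem:same_intersection}.
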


\begin{proof}
Let $s : M_0 \rightarrow \R^{p,q}$ be a stereographic projection. By assumption, there is $v \in \R^{p,q}$ isotropic and $\alpha,\beta \in \R$ such that $s(M_0 \cap M_1) = H_{v,\alpha}$ and $s(M_0 \cap M_2) = H_{-v,-\beta}$, and $\alpha > \beta$ (cf. Definition \ref{def:intersection_type}). Let $s' := s \circ \iota : \iota(M_0) \rightarrow \R^{p,q}$. By Remark \ref{rem:trace_sur_lantipodal}, we get that $s'(\iota(M_0) \cap M_1) = H_{-v,-\alpha}$ and $s'(\iota(M_0) \cap M_2) = H_{v,\beta}$, showing $\iota(M_0) \cap (M_1 \cup M_2) = \iota(M_0)$, because $H_{-v,-\alpha} \cup H_{v,\beta} = \R^{p,q}$.
\end{proof}

\begin{lemma}
\label{lem:trace_sur_le_bord}
Let $M_0,M_1,M_2$ be three Minkowski patches and $s : M_0 \rightarrow \R^{p,q}$ a stereographic projection. Assume that $s(M_0 \cap M_1) = H_{v,\alpha}$ and $s(M_0 \cap M_2) = H_{v,\beta}$ for $v \in \R^{p,q}$ isotropic, and $\alpha,\beta \in \R$. Then, $\partial M_0 \cap M_1 = \partial M_0 \cap M_2$.
\end{lemma}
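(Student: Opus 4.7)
The strategy is to identify each $\partial M_0 \cap M_i$ with a common set of ``canonical boundary lifts'' of $C_{x_0} \cap M_{y_i}$, after first showing that this downstairs intersection is the same for $i = 1, 2$. The geometric content is that two parallel degenerate affine hyperplanes in $\R^{p,q}$ extend in the conformal compactification to light cones whose vertices lie on a single null projective line through the vertex of $\pein(M_0)$.

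Call $x_0$ the vertex of $\pein(M_0)$. The hypothesis that $s(M_0 \cap M_i) = H_{v,\alpha_i}$ is a degenerate half-space forces the vertex $y_i$ of $\pein(M_i)$ to lie on $C_{x_0}$ (see Section~\ref{ss:preliminaries:intersection_minkowski_patches}), and $s(M_0 \cap C_{y_i})$ equals the affine hyperplane $\{b(v,\cdot) = \alpha_i\}$. Working in the coordinates of Section~\ref{s:preliminaries} adapted so that $x_0 = [e_0]$, I would lift $y_i$ to an isotropic vector $u^{(i)} \in e_0^\perp$ and compute $B(u^{(i)}, s^{-1}(w))$ directly from the inverse stereographic formula: the quadratic term in $w$ vanishes precisely because $B(e_0, y_i) = 0$, and the resulting affine equation compared with $b(v,w) = \alpha_i$ yields $u^{(i)} = \tilde v - \alpha_i e_0$, where $\tilde v \in e_0^\perp$ is a nonzero isotropic vector depending only on $v$. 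Hence $x_0, y_1, y_2$ all lie on the null projective line determined by the totally isotropic $2$-plane $\mathrm{span}(e_0, \tilde v)$. It follows that $e_0^\perp \cap y_1^\perp = e_0^\perp \cap y_2^\perp = \mathrm{span}(e_0, \tilde v)^\perp$, so $C_{x_0} \cap C_{y_1} = C_{x_0} \cap C_{y_2}$, and therefore $C_{x_0} \cap M_{y_1} = C_{x_0} \cap M_{y_2}$ in $\Ein^{p,q}$.

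To lift this to $\tilde{\Ein}^{p,q}$, fix any $\bar x$ in the common set and pick a sequence $w_k \in H_{v,\alpha_1} \cap H_{v,\alpha_2} = H_{v,\max(\alpha_1,\alpha_2)}$ with $w_k \to \bar x$ in $\Ein^{p,q}$. Then $s^{-1}(w_k) \in M_0 \cap M_1 \cap M_2$, and any subsequential limit $x \in \overline{M_0}$ lies in $\overline{M_i}$ for both $i$; since $M_i$ and $\iota(M_i)$ are disjoint open subsets whose union is $\pein^{-1}(M_{y_i})$, this limit cannot lie in $\iota(M_i)$, forcing $x \in M_i$. Because $\bar x$ has only two preimages in $\tilde{\Ein}^{p,q}$ with exactly one of them in $M_i$ (the other being in $\iota(M_i)$), the subsequential limit is unique, hence the full sequence converges to a common lift $x \in \partial M_0 \cap M_1 \cap M_2$ of $\bar x$. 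As $\partial M_0 \cap M_i$ contains exactly one preimage of each point of $C_{x_0} \cap M_{y_i}$, and the argument produces the same preimage for $i = 1$ and $i = 2$, we conclude $\partial M_0 \cap M_1 = \partial M_0 \cap M_2$. The main obstacle is the explicit linear-algebra identification $u^{(i)} = \tilde v - \alpha_i e_0$, which captures the geometric fact that varying $\alpha$ translates $y_i$ along a null projective line through $x_0$; the rest is a routine compactness-connectedness verification in the cover.
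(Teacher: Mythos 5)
There is a genuine gap, and it sits exactly where you declare the argument ``routine'': the existence, for \emph{every} $\bar{x}$ in the common set $C_{x_0}\cap M_{y_1}=C_{x_0}\cap M_{y_2}$, of a sequence $w_k\in H_{v,\alpha}\cap H_{v,\beta}$ with $\bar{s}^{-1}(w_k)\to\bar{x}$. Your first step is correct but it only controls the picture downstairs in $\Ein^{p,q}$, and downstairs the conclusion holds for \emph{any} two vertices on the null line through $x_0$ --- in particular it cannot distinguish the true statement from the false variant in which $s(M_0\cap M_2)=H_{-v,\beta}$. In that variant $\partial M_0\cap M_1$ and $\partial M_0\cap M_2$ are the two \emph{disjoint} lifts of the same set $C_{x_0}\cap M_{y_1}$ (they are exchanged by $\iota$, cf.\ Remark \ref{rem:trace_sur_lantipodal}), so the orientation hypothesis must enter your step 2 in an essential way. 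It enters only through the unproved claim that the component $H_{v,\max(\alpha,\beta)}$ of $\bar{s}(M_{x_0}\cap M_{y_1}\cap M_{y_2})$ accumulates at $\bar{x}$. Density of $M_{x_0}\cap M_{y_1}\cap M_{y_2}$ near $\bar{x}$ only gives a sequence in the complement of two hyperplanes, i.e.\ possibly in the slab $\{\min(\alpha,\beta)<b(v,\cdot)<\max(\alpha,\beta)\}$ or in $H_{-v,-\min(\alpha,\beta)}$; and deciding which of these components actually reaches $\bar{x}$ at infinity is equivalent to deciding which lifts of $\bar{x}$ coincide --- that is, to the lemma itself. (Note that in the opposite-orientation case the intersection is a slab, whose closure at infinity meets $C_{x_0}$ only along $C_{x_0}\cap C_{y_1}$ and misses $C_{x_0}\cap M_{y_1}$ entirely, which is why the lemma fails there; your outline never performs the computation that detects this difference.)

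To close the gap you must compute the trace at infinity of a degenerate half-space, i.e.\ determine $\overline{M_0\cap M_1}\cap\partial M_0$ inside the double cover, and check that it is the ``positive half'' $\{s\in\partial M_0 : s_n>0\}$ of the lifted light-cone, independently of $\alpha$. This is precisely what the paper's proof does: after normalizing so that $x_0=[1:0:\cdots:0]$ and $\Delta=\{[s_0:s_1:0:\cdots:0]\}$, it writes $M_1=\{s_n+ts_{n+1}>0\}$ explicitly in $\S^{n+1}$, computes $C_0=\partial M_0=\tilde{\Ein}^{p,q}\cap e_0^{\perp}$ as an explicit union, and observes that $C_0\cap M_1=\{s\in C_0 : s_n>0\}$, from which the parameter $t$ has disappeared while the sign (your orientation datum) survives. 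Your step 1 is a clean reformulation of the ``$t$ disappears'' half of this computation, but the ``sign survives'' half, which is the actual content of the lemma, is the part your limiting argument assumes rather than proves.
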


\begin{remark}
The condition on the intersection of the Minkowski patches is independent of the choice of $s$, in the sense that if $s'$ is another stereographic projection, then we have $v'$ isotropic and $\alpha',\beta' \in \R$ such that $s'(M_0 \cap M_1) = H_{v',\alpha'}$ and $s'(M_0 \cap M_2) = H_{v',\beta'}$.
\end{remark}

\begin{proof}
Let $M_x = \pein(M_0)$, $M_y = \pein(M_1)$ and $M_z = \pein(M_2)$ be their projections in $\Ein^{p,q}$, with vertices $x,y,z \in \Ein^{p,q}$. By hypothesis, $x,y,z$ lie on a same light-like geodesic $\Delta \subset \Ein^{p,q}$. Moreover, $y \neq x$ and $z \neq x$, because $M_1$ and $M_2$ cannot be equal to $M_0$ or antipodal to $M_0$ by assumption.

As $\PO(p+1,q+1)$ acts transitively on the set of pointed light-like projective lines of $\Ein^{p,q}$, we may assume $x = [1:0:\cdots:0]$ and $\Delta = \{[s_0:s_1:0:\cdots:0],\ (s_0,s_1) \neq (0,0)\}$ in the coordinates introduced in Section \ref{s:preliminaries}. Applying $\iota$ if necessary, we may also assume 
\begin{equation*}
M_0 = \left \{s^{-1}(u) := \frac{1}{\|(-\frac{q(u)}{2},u,1)\|} \left ( -\frac{q(u)}{2},u,1\right ), \ u \in \R^{p,q} \right \}
\end{equation*}
where $\|.\|$ denotes the usual Euclidean norm on $\R^n$ and $q$ the quadratic form on $\R^{p,q}$ induced by our choice of coordinates. Note that this means that $(-1,0,\ldots,0)$ is the space-like vertex of $M_0$ and $(1,0,\ldots,0)$ its time-like vertex.

Now, there is $t \in \R$ such that $y = [t:1:0:\cdots:0]$. Noting $v_y = (t,1,0,\ldots,0) \in \R^{p+1,q+1}$, $M_1$ is one of the two connected components of $\tilde{\Ein^{p,q}} \setminus v_y^{\perp}$, where the orthogonal is taken relatively to the inner product of $\R^{p+1,q+1}$. That is:
\begin{align*}
\text{either } & M_1 = \{(s_0,\ldots,s_{n+1}) \in \tilde{\Ein}^{p,q} \ : \ s_n + ts_{n+1} > 0\} \\
\text{ or } & M_1 = \{(s_0,\ldots,s_{n+1}) \in \tilde{\Ein}^{p,q} \ : \ s_n + ts_{n+1} < 0\}.
\end{align*}
In each case, we get $s(M_0 \cap M_1) = \{u \in \R^{p,q} \ : \ u_n > -t\} = H_{e_1,-t}$ or $s(M_0 \cap M_1) = \{u \in \R^{p,q} \ : \ u_n < -t\} = H_{-e_1,t}$. Let us say that we are in the first case.

We consider now $C_0 = \partial M_0 = \tilde{\Ein}^{p,q} \cap e_0^{\perp}$. We get
\begin{align*}
C_0 = & \left  \{ \frac{1}{\|(1,x,0)\|}(1,x,0), \ x \in C^{p,q}\right \} \cup \left  \{ \frac{1}{\|(1,x,0)\|}(-1,x,0), \ x \in C^{p,q} \right \} \\
 & \cup \left  \{ (0,x,0), \ x \in \tilde{\Ein}^{p-1,q-1} \right\},
\end{align*}
where $C^{p,q}$ denotes $\{x \in \R^{p,q} \ : \ q(x)=0\}$. So, $C_0 \cap M_1$ is simply the same union, with the additional requirement that $x_n > 0$, where $x_n$ is the last coordinate of $x$. Thus, the parameter $t$ defining the position of $y$ on $\Delta$ does not appear any longer.

This finishes the proof. Indeed, there is $t' \in \R$ such that $z = [t':1:0:\cdots:0]$. Necessarily, we will have
\begin{equation*}
M_2 = \{(s_0,\ldots,s_{n+1}) \in \tilde{\Ein}^{p,q} \ : \ s_n + t's_{n+1} > 0\}
\end{equation*}
because $s(M_0 \cap M_1)$ and $s(M_0 \cap M_2)$ are assumed to be ``oriented'' by the same isotropic vector.  
Consequently, $M_2 \cap C_0 = M_1 \cap C_0$ as announced.
\end{proof}

Similarly to the projective case, we will use the fact that $\D$ is still injective on some neighborhood of the closure of the $V_i$'s. We will consider neighborhoods of closures of maximal charts which are developed to the following type of neighborhoods of closures of Minkowski patches.

\begin{definition}
Let $M_0 \subset \tilde{\Ein}^{p,q}$ be a Minkowski patch and $s : M_0 \rightarrow \R^{p,q}$ a stereographic projection. For all $\epsilon > 0$, we define $M_0^{s,\epsilon} = \tilde{\Ein}^{p,q} \setminus (s \circ \iota)^{-1}(B(0,\frac{1}{\epsilon}))$, where $B(0,R) = \{v \in \R^{p,q} \ : \ v_1^2 + \cdots + v_n^2 \leq R^2\}$ for $R > 0$.
\end{definition}

We observe that for any $s$, the $M_0^{s,\epsilon}$, $\epsilon > 0$, form a fundamental system of neighborhoods of $\bar{M_0}$. 

\begin{lemma}
For any open neighborhood $\mathcal{V} \supset \bar{M_0}$ and any stereographic projection $s : M_0 \rightarrow\R^{p,q}$, there exists $\epsilon > 0$ such that $M_0^{s,\epsilon} \subset \mathcal{V}$.
\end{lemma}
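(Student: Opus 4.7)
The plan is to reduce the claim to a routine compactness argument once I have established the geometric identification $\tilde{\Ein}^{p,q} = \bar{M_0} \sqcup \iota(M_0)$. I would first verify this decomposition. By definition, $M_0$ and $\iota(M_0)$ are the two disjoint connected components of the open set $\pein^{-1}(\pein(M_0))$, so their complement in $\tilde{\Ein}^{p,q}$ is $\pein^{-1}(C_x)$, where $C_x$ is the light-cone of the vertex $x$ of $\pein(M_0)$. In the concrete model $\tilde{\Ein}^{p,q} = \{Q=0\} \cap \S^{n+1} \subset \R^{p+1,q+1}$ recalled in Section \ref{s:preliminaries}, choosing coordinates so that $x = [e_0]$, the two components of $\pein^{-1}(\pein(M_0))$ are distinguished by the sign of the coordinate dual to $e_0$, and their common boundary $\{B(e_0,\cdot)=0\}$ is manifestly in the closure of each of them. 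This yields $\pein^{-1}(C_x) \subset \bar{M_0}$. Since, on the other hand, $\iota(M_0)$ is an open set disjoint from $M_0$ it cannot meet $\bar{M_0}$; hence the announced decomposition $\tilde{\Ein}^{p,q} = \bar{M_0} \sqcup \iota(M_0)$.

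Granted this, the lemma is immediate. Let $\mathcal{V} \supset \bar{M_0}$ be an open neighborhood and set $K = \tilde{\Ein}^{p,q} \setminus \mathcal{V}$. Then $K$ is closed in the compact space $\tilde{\Ein}^{p,q}$, hence compact, and disjoint from $\bar{M_0}$, so the decomposition forces $K \subset \iota(M_0)$. Because $s \circ \iota : \iota(M_0) \to \R^{p,q}$ is a diffeomorphism, $(s \circ \iota)(K)$ is a compact subset of $\R^{p,q}$ and therefore lies inside some closed ball $B(0,R)$. Any $\epsilon \in (0, 1/R)$ then satisfies $K \subset (s \circ \iota)^{-1}(B(0, 1/\epsilon))$, which is exactly the statement $M_0^{s,\epsilon} \subset \mathcal{V}$.

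The only non-trivial ingredient is the identification $\tilde{\Ein}^{p,q} \setminus \iota(M_0) \subset \bar{M_0}$, i.e.\ the fact that the shared boundary $\pein^{-1}(C_x)$ is approachable from within $M_0$; everything else is a direct application of compactness of $\tilde{\Ein}^{p,q}$ together with the fact that $s \circ \iota$ is a global diffeomorphism from $\iota(M_0)$ onto $\R^{p,q}$.
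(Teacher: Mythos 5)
Your proof is correct; the paper states this lemma without proof, and your compactness argument based on the partition $\tilde{\Ein}^{p,q} = \bar{M_0} \sqcup \iota(M_0)$ (a partition the paper itself invokes, e.g.\ in the proof of Lemma \ref{lem:intersection_thickening_connected}) is exactly the intended one. The only step deserving an extra word is that the two lifts of the vertex of $\pein(M_0)$ are critical points, with critical value $0$, of the linear form cutting out $M_0$ on $\S^p\times\S^q$; they are saddle points (for $p,q\geq 1$), so they too lie in $\bar{M_0}$, confirming that all of $\pein^{-1}(C_x)$ is approachable from within $M_0$.
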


Thus, if $V \subset \tilde{M}$ is a relatively compact maximal chart and $s : \D(V) \rightarrow \R^{p,q}$ a stereographic projection, for small enough $\epsilon > 0$, there exists a neighborhood $V^{s,\epsilon}$ of $\bar{V}$ on which $\D$ is still injective and such that $\D(V^{s,\epsilon}) = \D(V)^{s,\epsilon}$.

\begin{lemma}
\label{lem:intersection_thickening_connected}
Let $M_0$, $M_1$ be two Minkowski patches in $\tilde{\Ein}^{p,q}$. Then, for all $\epsilon >0$ and stereographic projection $s : M_0 \rightarrow \R^{p,q}$, $M_0^{s,\epsilon} \cap M_1$ is a non-empty, connected open set.
\end{lemma}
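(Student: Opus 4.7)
The plan is to split into three cases according to whether $M_1$ equals $M_0$, equals $\iota(M_0)$, or neither; in the first two cases the lemma reduces to a direct computation, while in the generic case I would decompose $M_0^{s,\epsilon} \cap M_1$ along the partition $\tilde\Ein^{p,q} = M_0 \sqcup \partial M_0 \sqcup \iota(M_0)$ and glue the resulting pieces. Observe first that $K_\epsilon := (s \circ \iota)^{-1}(\bar B(0,1/\epsilon))$ is by construction a compact subset of $\iota(M_0)$, disjoint from $\bar M_0 = M_0 \cup \partial M_0$; hence $M_0^{s,\epsilon}$ entirely contains $\bar M_0$. If $M_1 = M_0$, then $M_0^{s,\epsilon} \cap M_0 = M_0$, trivially non-empty and connected. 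If $M_1 = \iota(M_0)$, then $M_0^{s,\epsilon} \cap \iota(M_0) = \iota(M_0) \setminus K_\epsilon$ identifies, under the diffeomorphism $s \circ \iota$, with $\R^{p,q} \setminus \bar B(0,1/\epsilon)$, which is connected and non-empty.

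Assume now $M_1 \notin \{M_0, \iota(M_0)\}$. Observation \ref{obs:intersection}, applied to the distinct non-antipodal pairs $(M_0, M_1)$ and $(\iota(M_0), M_1)$, yields that $M_1 \cap M_0$ and $M_1 \cap \iota(M_0)$ are both non-empty and of one of the connected intersection types of Definition \ref{def:intersection_type}. Since $M_1$ is connected and partitions as $M_1 = (M_1 \cap M_0) \sqcup (M_1 \cap \partial M_0) \sqcup (M_1 \cap \iota(M_0))$ with the two open pieces non-empty, the ``middle'' piece $M_1 \cap \partial M_0$ is also non-empty. I would then write
\[
M_0^{s,\epsilon} \cap M_1 \;=\; A \cup W, \qquad A := M_1 \cap \bar M_0, \quad W := M_1 \cap (\iota(M_0) \setminus K_\epsilon),
\]
and show each is connected and that the two pieces glue together.

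The set $A$ is the closure in $M_1$ of the connected set $M_1 \cap M_0$: every $x \in M_1 \cap \partial M_0$ has every $M_1$-neighborhood meeting $M_0$, hence meeting $M_1 \cap M_0$. The main technical obstacle is proving that $W$ is connected. Transported via $s \circ \iota$, it becomes an intersection-type open set $U \subset \R^{p,q}$ minus the closed Euclidean ball $\bar B(0,1/\epsilon)$. For the half-space case $H_{v_0,\alpha}$ this is immediate, as a Euclidean half-space minus a closed ball stays connected in dimension at least $2$. For the $v_0 + U_S$ or $v_0 + U_T$ case, I would exploit that the hypothesis $\min(p,q) \geq 2$ forces the cone $U_S$ to have connected link in $S^{n-1}$, so that the asymptotic region $\{|w| > R\}$ inside a translate of $U_S$ is connected for $R$ large; then I would show that every point of $U \setminus \bar B(0,1/\epsilon)$ can be joined to this asymptotic region by a path staying in $U$ and avoiding the ball, by a ``pushing outward'' argument that uses the room available in dimension $n \geq 4$ to perturb radial paths off the ball.

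To conclude, I would check $M_1 \cap \partial M_0 \subset A \cap \bar W$: each $x \in M_1 \cap \partial M_0$ lies in $A$ by definition, and is a limit of points in $\iota(M_0)$; since $K_\epsilon$ is compact in $\tilde\Ein^{p,q}$ and disjoint from $\partial M_0$, these approximating points can be chosen outside $K_\epsilon$, hence in $W$. Thus $A$ and $W$ are connected with $A \cap \bar W \supset M_1 \cap \partial M_0 \neq \emptyset$, so their union $M_0^{s,\epsilon} \cap M_1$ is connected; non-emptiness is clear from $A \supset M_1 \cap M_0 \neq \emptyset$.
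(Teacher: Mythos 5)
Your proof is correct and follows essentially the same route as the paper: split off the cases $M_1=M_0$ and $M_1=\iota(M_0)$, decompose $M_0^{s,\epsilon}\cap M_1$ along the partition $\tilde{\Ein}^{p,q}=\bar{M_0}\cup\iota(M_0)$, and reduce to the connectedness of an intersection-type open set of $\R^{p,q}$ minus a closed ball. You merely supply more detail than the paper does on the gluing of the two pieces along $M_1\cap\partial M_0$ and on why the sets $(v+U_S)\setminus B$, $(v+U_T)\setminus B$ and $H_{v,\alpha}\setminus B$ are connected, which the paper asserts without proof.
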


\begin{proof}
The lemma is clear if $M_1 = M_0$ or $M_1 = \iota(M_0)$. We note $s': = s \circ \iota$. According to the partition $\tilde{\Ein}^{p,q} = \bar{M_0} \cup \iota(M_0)$, because $M_0^{s,\epsilon}$ is a neighborhood of the closure of $\bar{M_0}$, we have $M_0^{s,\epsilon} \cap M_1 = (M_1 \cap \bar{M_0}) \cup ((M_1 \cap \iota(M_0)) \setminus s'^{-1}(B(0,\frac{1}{\epsilon})))$.

Because $M_1 = (M_1 \cap \bar{M_0}) \cup (M_1 \cap \iota(M_0))$ is connected, it is enough to observe that $(M_1 \cap \iota(M_0)) \setminus s'^{-1}(B(0,\frac{1}{\epsilon}))$ is connected. In the stereographic projection $s' : \iota(M_0) \rightarrow \R^{p,q}$, this open set is sent to 
\begin{itemize}
\item either $(v+U_S) \setminus B(0,\frac{1}{\epsilon})$ for some $v \in \R^{p,q}$,
\item or $(v+U_T) \setminus B(0,\frac{1}{\epsilon})$ for some $v \in \R^{p,q}$,
\item or $H_{v,\alpha} \setminus B(0,\frac{1}{\epsilon})$ for some isotropic $v \in \R^{p,q}$ and $\alpha \in \R$.
\end{itemize}
All of them are always connected, proving the lemma.
\end{proof}

\subsubsection{Configurations for which the induction fails}

Let $s : M_{m+1} \rightarrow \R^{p,q}$ be a stereographic projection and $U_i = s(M_i \cap M_{m+1})$ for $1 \leq i \leq m$, so that $U_i$ is either empty or of intersection type (Definition \ref{def:intersection_type}).

\begin{lemma}
\label{lem:configuration_conforme}
Let $W_1,\ldots,W_l \subset \R^{p,q}$ be a finite family of pairwise distinct, non-empty open sets of intersection type. Then, $W_1 \cup \cdots \cup W_l$ is not connected if and only if 
\begin{enumerate}
\item either there exist $v \in \mathcal{C} \setminus \{0\}$, $\alpha_1,\ldots,\alpha_l \in \R$ and $1 \leq k_0 \leq l-1$ such that up to permutation, $\alpha_1 > \cdots > \alpha_{k_0} \geq \alpha_{k_0+1} > \cdots > \alpha_l$ and $W_i = H_{v,\alpha_i}$ for all $i \leq k_0$ and $W_i = H_{-v,-\alpha_i}$ for all $i > k_0$.
\item or $l=2$, and up to permutation, $U_1 = v + U_S$ and $U_2 = v + U_T$ for $v \in \R^{p,q}$.
\end{enumerate}
\end{lemma}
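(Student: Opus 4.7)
The implication ($\Leftarrow$) is a direct verification from Fact \ref{fact:disjoint_open_sets}. In configuration (1), every $W_i$ with $i \leq k_0$ lies in $H_{v, \alpha_{k_0}}$, every $W_j$ with $j > k_0$ lies in $H_{-v, -\alpha_{k_0+1}}$, and these two enveloping half-spaces are disjoint precisely because of the assumed inequality $\alpha_{k_0} \geq \alpha_{k_0+1}$. In configuration (2), $v + U_S$ and $v + U_T$ are disjoint since $U_S$ and $U_T$ are.

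For the converse, suppose $W_1 \cup \cdots \cup W_l$ is disconnected. Each $W_i$ is a connected open subset of $\R^{p,q}$ (the assumption $\min(p,q)\geq 2$ ensures connectedness of the translates of $U_S$ and $U_T$), so it lies in a single connected component of the union, and disconnectedness yields a non-trivial partition $\{1, \ldots, l\} = I_A \sqcup I_B$ such that $W_i \cap W_j = \emptyset$ whenever $i \in I_A$ and $j \in I_B$. Since any half-space meets every translate of $U_S$ or $U_T$ by Fact \ref{fact:disjoint_open_sets}, a half-space in $I_A$ forces every element of $I_B$ to be a half-space, and symmetrically; iterating this observation, either every $W_i$ is a half-space, or every $W_i$ is a translate of $U_S$ or $U_T$.

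Assume first that every $W_i$ is a half-space, written $W_i = H_{v_i, \alpha_i}$. Fact \ref{fact:disjoint_open_sets} applied to the pairs $(i, j) \in I_A \times I_B$ gives $v_j = -v_i$, and comparing two choices of $i \in I_A$ shows that all $v_i$'s with $i \in I_A$ coincide; after rescaling to a common representative $v$, we have $W_i = H_{v, \alpha_i}$ for $i \in I_A$ and $W_j = H_{-v, \beta_j}$ for $j \in I_B$. Reindex so that $I_A = \{1, \ldots, k_0\}$ with $\alpha_1 > \cdots > \alpha_{k_0}$ and $I_B = \{k_0+1, \ldots, l\}$ with $\beta_{k_0+1} < \cdots < \beta_l$ (possible by distinctness of the $W_i$'s), and set $\alpha_j := -\beta_j$ for $j > k_0$, so that $W_j = H_{-v, -\alpha_j}$ and $\alpha_{k_0+1} > \cdots > \alpha_l$. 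The disjointness condition $H_{v, \alpha_i} \cap H_{-v, -\alpha_j} = \emptyset$ unfolds to $\alpha_i \geq \alpha_j$ for all $i \leq k_0 < j$, equivalently to the single inequality $\alpha_{k_0} \geq \alpha_{k_0+1}$. This is exactly configuration (1).

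In the remaining case, pick $W_1 \in I_A$ and assume, up to swapping the roles of $U_S$ and $U_T$, that $W_1 = v + U_S$. Two $U_S$-translates always intersect, so $I_B$ consists only of $U_T$-translates; moreover $u + U_T$ is disjoint from $v + U_S$ only when $u = v$, so by distinctness $I_B = \{v + U_T\}$. Applying the same argument inside $I_A$ using $v + U_T \in I_B$ gives $I_A = \{v + U_S\}$, so $l = 2$ and we land in configuration (2). The principal technical point of the argument is the bookkeeping in the first case: one has to identify the common isotropic direction, then rewrite the half-spaces of $I_B$ using the sign convention of the statement, in order to express the unique cross-side disjointness condition as the weak inequality $\alpha_{k_0} \geq \alpha_{k_0+1}$ sandwiched between two strictly decreasing subsequences.
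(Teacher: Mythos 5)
Your proof is correct and follows essentially the same route as the paper: a dichotomy (all half-spaces versus all translates of $U_S$/$U_T$) driven by Fact \ref{fact:disjoint_open_sets}, with the same rescaling and reordering bookkeeping in the half-space case; your packaging of the disconnection as a partition $I_A\sqcup I_B$ with cross-disjointness is only a mild reorganization of the paper's contradiction arguments. You additionally spell out the easy converse implication, which the paper leaves implicit.
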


\begin{proof}
We use repeatedly Fact \ref{fact:disjoint_open_sets}. Let us assume that $U := W_1 \cup \cdots \cup W_l$ is not connected.

\textbf{Case 1}: There exists $i$ such that $W_i = H_{v_i,\alpha_i}$ for $v_i \in \R^{p,q}$ isotropic and $\alpha_i \in \R$.

In this situation, necessarily for all $j$, $W_j$ is also of the form $W_j = H_{v_j,\alpha_j}$. Indeed, let us assume for instance that, to the contrary, there exists $j$ such that $W_j = v_j + U_S$. Then, for all $1 \leq k \leq l$, $W_k$ intersects $W_i \cup W_j$. The latter is connected because $W_i \cap W_j \neq \emptyset$. Thus, given any $1 \leq k \leq n$, $W_k \cup W_i \cup W_j$ is connected, proving that $U$ is connected, a contradiction.

Moreover, by similar a argument, all the vectors $v_j$ must lie on a same isotropic line. If we rescale them, we get that up to a permutation of $\{1,\ldots,l\}$, there is $k_0 \in \{1,\ldots,l-1\}$ and $v \in \R^{p,q}$ isotropic such that for all $k \leq k_0$, $W_k = H_{v,\alpha_k}$ and for all $k \geq k_0+1$, $W_k = H_{-v,-\alpha_k}$ and with $\alpha_1 > \cdots > \alpha_{k_0} \geq \alpha_{k_0+1} > \cdots > \alpha_{l}$.

\textbf{Case 2}: For all $i$, $W_i$ is of the form $v_i + U_S$ or $v_i +U_T$.

\textbf{Case 2.a}: All $W_i$'s are of the same type. Then, they intersect pairwise and $U$ is connected, a contradiction.

\textbf{Case 2.b}: There exist $i,j$ such that $W_i = v_i+U_S$ and $W_j = v_j+U_T$. If we had $v_i \neq v_j$, then $W_i \cup W_j$ would be connected, and since any other $W_k$ would intersect it, we would get as before that $U$ is connected. So, $v_i = v_j$, and moreover, if there existed a third open subset $W_k$ distinct from $W_i$ and $W_j$, then $W_k$ would intersect $W_i$ and $W_j$, implying that $W_i \cup W_j \cup W_k$ is connected and dense in $\R^{p,q}$. In particular, it would intersect any other $W_{k'}$, contradicting the non-connectedness of $U$.

Finally, in Case 2, we must have $l=2$ and $W_1 = v+U_S$ and $W_2 = v+U_T$ as announced.
\end{proof}

\subsubsection{Case of a family of half-spaces}
\label{sss:family_halfspaces}

We assume here that if we remove the eventual $U_i$ which is empty, the remaining ones are in the first configuration of Lemma \ref{lem:configuration_conforme}. We then have $l \in \{ m-1 , m\}$, $W_1,\ldots,W_l \subset \R^n$, $1 \leq k_0 \leq l-1$, $\alpha_1 > \cdots > \alpha_{k_0} \geq \alpha_{k_0+1} > \cdots > \alpha_l$ such that $W_k = H_{v,\alpha_k}$ for $k \leq k_0$ and $W_k = H_{-v,-\alpha_k}$ for $k > k_0$, and an injective map $\sigma : \{1,\ldots,l\} \rightarrow \{1,\ldots,m\}$ such that $W_k = U_{\sigma(k)}$ for all $k$, and if $l=m-1$ and $i$ is the unique element not in the range of $\sigma$, $U_i = \emptyset$. Let $i_0,j_0$ be such that $U_{i_0} = W_{k_0}$ and $U_{j_0} = W_{k_0+1}$. 

\vspace{.2cm}

\begin{flushleft}
\underline{Case $\alpha_{k_0} = \alpha_{k_0+1}$.}
\end{flushleft}

In this situation $M_{j_0} = \iota (M_{i_0})$, and we the same arguments as in the projective case apply verbatim to get $\partial V_{i_0} \cap \partial V_{j_0} \neq \emptyset$. Consequently, if $s_0 : M_{i_0} \rightarrow \R^{p,q}$ is any stereographic projection and if $\epsilon >0$ is small enough, such that there exists a neighborhood $V_{i_0}^{s_0,\epsilon}$ of $\bar{V}_{i_0}$ in restriction to which $\D$ is injective and such that $\D(V_{i_0}^{s_0,\epsilon}) = M_{i_0}^{s_0,\epsilon}$, then $V_{i_0}^{s_0,\epsilon} \cap V_{j_0} \neq \emptyset$. The intersection $M_{i_0}^{s_0,\epsilon} \cap M_{j_0}$ is homeomorphic to the complement of a ball in $\R^{p,q}$, thus connected. We conclude by Lemma \ref{lem:intersection_connexe} that $\D$ is injective on $V_{i_0}^{s_0,\epsilon} \cup V_{j_0}$, and the image of the latter is $\tilde{\Ein}^{p,q}$.

\vspace{.2cm}

\begin{flushleft}
\underline{Case $\alpha_{k_0} < \alpha_{k_0+1}$.}
\end{flushleft}

\begin{claim}
In this situation, $M_1 \cup \ldots \cup M_m = M_{i_0} \cup M_{j_0}$.
\end{claim}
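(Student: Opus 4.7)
The plan is to mirror the projective argument verbatim, using the partition $\tilde{\Ein}^{p,q} = M_{m+1} \cup \partial M_{m+1} \cup \iota(M_{m+1})$ and verifying, for each $k \in \{1,\ldots,m\}$, that the intersection of $M_k$ with each of the three pieces lies in $M_{i_0} \cup M_{j_0}$. Throughout I use the conventions of Lemma \ref{lem:configuration_conforme} and the present (strict) case, writing $W_{k'} = U_{\sigma(k')}$, with the two outermost pieces $W_{k_0} = H_{v,\alpha_{k_0}}$ and $W_{k_0+1} = H_{-v,-\alpha_{k_0+1}}$.

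On $M_{m+1}$ the argument is purely combinatorial: since $\alpha_1 > \cdots > \alpha_{k_0}$ and $\alpha_{k_0+1} > \cdots > \alpha_l$, the nesting $W_{k'} \subset W_{k_0}$ for $k' \leq k_0$ and $W_{k'} \subset W_{k_0+1}$ for $k' > k_0$ is immediate from the definition of $H_{v,\alpha}$. Pulling back through $s$ gives $M_{\sigma(k')} \cap M_{m+1} \subset (M_{i_0} \cup M_{j_0}) \cap M_{m+1}$ for every $k'$.

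On $\iota(M_{m+1})$, the strict inequality means that $W_{k_0}$ and $W_{k_0+1}$ are disjoint, so $M_{m+1} \cap M_{i_0}$ and $M_{m+1} \cap M_{j_0}$ are disjoint degenerate half-spaces of $M_{m+1}$. Furthermore $M_{i_0} \neq \iota(M_{j_0})$: if they were antipodal, Remark \ref{rem:trace_sur_lantipodal} would force $\alpha_{k_0} = \alpha_{k_0+1}$, contradicting our case assumption. Fact \ref{fact:iota_m0} then yields $\iota(M_{m+1}) \subset M_{i_0} \cup M_{j_0}$, hence $M_k \cap \iota(M_{m+1}) \subset M_{i_0} \cup M_{j_0}$ for every $k$. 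On $\partial M_{m+1}$, for any two indices $k',k'' \leq k_0$ the sets $s(M_{m+1} \cap M_{\sigma(k')})$ and $s(M_{m+1} \cap M_{\sigma(k'')})$ are both of the form $H_{v,\cdot}$ with the same isotropic $v$, so Lemma \ref{lem:trace_sur_le_bord} gives $M_{\sigma(k')} \cap \partial M_{m+1} = M_{\sigma(k'')} \cap \partial M_{m+1}$; taking $k'' = k_0$ yields equality with $M_{i_0} \cap \partial M_{m+1}$, and the symmetric argument using $-v$ handles the indices $k' > k_0$ with $M_{j_0}$ in place of $M_{i_0}$.

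The one remaining case is the (at most one) index $k \in \{1,\ldots,m\}$ with $U_k = \emptyset$. Then $M_k$ is open and disjoint from $M_{m+1}$, so $M_k \subset \iota(M_{m+1}) \subset M_{i_0} \cup M_{j_0}$ by the previous paragraph. Assembling the three pieces, $M_k \subset M_{i_0} \cup M_{j_0}$ for every $k$, and the reverse inclusion is trivial. There is no genuine obstacle: all the substantive geometry (coincidence of boundary traces, antipodal containment, classification of intersection types) has been packaged into Lemma \ref{lem:trace_sur_le_bord}, Fact \ref{fact:iota_m0}, and Remark \ref{rem:trace_sur_lantipodal}, so what remains is careful index bookkeeping along the partition.
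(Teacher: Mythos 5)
Your proof is correct and follows essentially the same route as the paper: the partition $\tilde{\Ein}^{p,q} = M_{m+1} \cup \partial M_{m+1} \cup \iota(M_{m+1})$, the nesting of the half-spaces $H_{v,\alpha_k}$ for the trace on $M_{m+1}$, Fact \ref{fact:iota_m0} for the antipodal piece, and Lemma \ref{lem:trace_sur_le_bord} for the boundary traces. Your explicit verification that $M_{i_0} \neq \iota(M_{j_0})$ via Remark \ref{rem:trace_sur_lantipodal} checks a hypothesis of Fact \ref{fact:iota_m0} that the paper leaves implicit ("by the choice of $i_0,j_0$"), which is a worthwhile addition.
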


\begin{proof}
We prove the non-obvious inclusion by observing that for all $i$, the traces of $M_i$ on the partition $\tilde{\Ein}^{p,q} = M_{m+1} \cup \partial M_{m+1} \cup \iota(M_{m+1})$ are included in $M_{i_0} \cup M_{j_0}$. By Fact \ref{fact:iota_m0} and by the choice of $i_0,j_0$, we have $\iota(M_{m+1}) \subset M_{i_0} \cup M_{j_0}$, and immediately, $M_i \cap \iota(M_{m+1}) \subset M_{i_0} \cup M_{j_0}$. Applying $s$, it is immediate by construction that $M_i \cap M_{m+1} \subset M_{i_0} \cup M_{j_0}$. Finally,
\begin{enumerate}
\item If $i$ is not in the range of $\sigma$, then it means that $M_i = \iota(M_{m+1})$, and then $\partial M_{m+1} \cap M_i = \emptyset$.
\item If $i = \sigma(k)$ for $k \leq k_0$, then we get $\partial M_{m+1} \cap M_i = \partial M_{m+1} \cap M_{i_0}$ by Lemma \ref{lem:trace_sur_le_bord}.
\item If $i = \sigma(k)$ for $k > k_0$, then we get $\partial M_{m+1} \cap M_i = \partial M_{m+1} \cap M_{j_0}$ by Lemma \ref{lem:trace_sur_le_bord}.
\end{enumerate}
In all cases, we have $M_i \cap \partial M_{m+1} \subset M_{i_0} \cup M_{j_0}$. So, $M_i \subset M_{i_0} \cup M_{j_0}$, and the claim is proved.
\end{proof}

From Lemma \ref{lem:assiettes_emboitees}, by injectivity of $\D$ in restriction to $V_1 \cup \cdots \cup V_m$, it follows that $V_1 \cup \cdots \cup V_m = V_{i_0} \cup V_{j_0}$. So, $V_{i_0} \cap V_{j_0} \neq \emptyset$, $V_{m+1} \cap (V_{i_0} \cup V_{j_0}) \neq \emptyset$. Let $\epsilon > 0$ and $V_{m+1}^{s,\epsilon} \supset \bar{V_{m+1}}$ be an open neighborhood of $\bar{V_{m+1}}$ in restriction to which $\D$ is injective and that develops onto $M_{m+1}^{s,\epsilon}$. 

By Lemma \ref{lem:intersection_thickening_connected}, $\D(V_{m+1}^{s,\epsilon}) \cap \D(V_{i_0})$ and $\D(V_{m+1}^{s,\epsilon}) \cap \D(V_{j_0})$ are connected. Let us prove that they intersect. Let $s' := s \circ \iota$. Then, we have $s'(\iota(M_{m+1}) \cap M_{i_0}) = H_{-v,-\alpha_{k_0}}$ and $s'(\iota(M_{m+1}) \cap M_{j_0}) = H_{v,\alpha_{k_0+1}}$ according to Remark \ref{rem:trace_sur_lantipodal}. Since $\alpha_{k_0} > \alpha_{k_0+1}$, $H_{-v,-\alpha_{k_0}} \cap H_{v,\alpha_{k_0+1}} = \{w \in \R^{p,q} \ : \  \alpha_{k_0} > b(w,v) > \alpha_{k_0+1}\}$ is a non-empty strip. This shows that $s'(\iota(M_{m+1}) \cap M_{i_0} \cap M_{j_0})$ contains vectors with arbitrary large Euclidean norm, so $M_{m+1}^{s,\epsilon} \cap M_{i_0} \cap M_{j_0} \neq \emptyset$. Consequently, $\D(V_{m+1}^{s,\epsilon}) \cap (\D(V_{i_0}) \cup \D(V_{j_0}))$ is connected, and Lemma \ref{lem:intersection_connexe} implies that $\D$ is injective in restriction to $V_{m+1}^{\epsilon} \cup V_{i_0} \cup V_{j_0}$. Finally, since $M_{m+1}^{\epsilon} \cup M_{i_0} \cup M_{j_0} = \tilde{\Ein}^{p,q}$, we obtain that $\D$ is a diffeomorphism onto $\tilde{\Ein}^{p,q}$ similarly as before.

\subsubsection{Case of space/time open sets}
  
We finally assume that if we remove the eventual $U_i$ which is empty, the remaining ones are in the second configuration of Lemma \ref{lem:configuration_conforme}. Thus, there is $v \in \R^{p,q}$ such that for all $i$, either $U_i = \emptyset$, or $U_i = v + U_S$, or $U_i = v + U_T$.

Necessarily, $U_1 = \emptyset$ or $U_2 = \emptyset$. Indeed, if both are non-empty, then up to a permutation, $U_1 = v+U_S$ and $U_2 = v+U_T$, for $v \in \R^{p,q}$. It implies that $M_1 = \iota(M_2)$ and in particular $M_1 \cap M_2 = \emptyset$, contradicting $V_1 \cap V_2 \neq \emptyset$. So, $m=3$ and exchanging $V_1$ and $V_2$ if necessary, we may assume $U_1 = \emptyset$ and $U_2 = v + U_S$ or $U_2 = v + U_T$. Let us assume $U_2 = v + U_S$, the other case being similar. The $U_i$'s being pairwise distinct, we must have $U_3 = v+U_T$, implying as above that $M_3 = \iota(M_2)$.

Finally, exchanging $V_1$ and $V_2$ if necessary, we have $m=3$ and $M_1 = \iota(M_4)$, $M_2$ such that $s(M_2 \cap M_4) = v+U_S$ and $M_3 = \iota(M_2)$.

\vspace{.2cm}

Thus, the same reasoning as in the case $\alpha_{k_0} = \alpha_{k_0+1}$ of Section \ref{sss:family_halfspaces} applies if $V_2,V_3$ play the role of $V_{i_0},V_{j_0}$: both are included in a connected injectivity domain of $\D$, and they develop to antipodal Minkowski patches. We thus obtain that $\D$ is also a diffeomorphism in this last situation, completing the proof of Theorem \ref{thm:atlas}.

\section{Projective flatness}
\label{s:projective_flatness}

In this last section, we prove as announced the following proposition.

\begin{proposition}
\label{prop:projective_flat}
Let $\Gamma$ be a cocompact lattice in a connected simple Lie group $G$ of $\R$-rank $n \geq 2$, and let $(M^n,\nabla)$ be a closed $n$-manifold endowed with a linear connection. Let $\alpha : \Gamma \rightarrow \Proj(M,\nabla)$ be a projective action. If $\alpha(\Gamma)$ is infinite, then $\nabla$ is projectively flat.
\end{proposition}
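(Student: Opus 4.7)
The plan is to replace the linear connection by its associated normal projective Cartan geometry and mimic the conformal-flatness proof of \cite{pecastaing_conformal_lattice} in this parabolic-geometric setting. Let $(\pi : B \to M, \omega)$ be the canonical Cartan geometry modeled on $(\g_{\X},\p) = (\sl(n+1,\R),\p)$; then $\nabla$ is projectively flat if and only if the Cartan curvature function $\kappa : B \to \wedge^2(\g_{\X}/\p)^* \otimes \g_{\X}$ vanishes identically. Every $\gamma \in \alpha(\Gamma)$ lifts to an $\omega$-preserving bundle automorphism $\hat{\gamma}$, so $\kappa \circ \hat{\gamma} = \kappa$.

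The core step is to show $\kappa$ vanishes at one point. The assumption that $\alpha(\Gamma)$ is infinite combined with $\Rk_{\R} G = n$ allows one to apply the argument of Section 6 of \cite{pecastaing_conformal_lattice} — which transposes verbatim to the projective case — producing, inside any non-empty closed $\Gamma$-invariant subset, a point $x$, a sequence $(\gamma_k) \subset \Gamma$, and $T_k \to \infty$ such that $\gamma_k U \to \{x\}$ for some neighborhood $U$ of $x$ and $\frac{1}{T_k}\log\|D_x\gamma_k v\| \to -1$ for every $v \neq 0$ (Lemma \ref{lem:localisation_lyapunov_data}). Fix a local section $\sigma$ of $B$ near $x$, set $\hat{x} = \sigma(x)$, and write $\hat{\gamma}_k(\hat{x}) = \sigma(\gamma_k x) \cdot p_k$ with $p_k \in P$. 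Then $\kappa(\hat{x}) = p_k^{-1} \cdot \kappa(\sigma(\gamma_k x))$, and $\gamma_k x \to x$ together with continuity of $\kappa$ reduce the question to the asymptotic behavior of $p_k^{-1}$. Applying the $KAK$ argument of Lemma \ref{lem:gk} to the isotropy image of $p_k$ yields a decomposition $p_k = \ell_k a_k \ell_k'$ with $(\ell_k),(\ell_k')$ bounded in $P$ and $a_k \in A$; the uniform Lyapunov spectrum $\{-1,\dots,-1\}$ on $\g_{\X}/\p \simeq \n_-$ forces $a_k$ to concentrate along the ray $\R_{>0}(-X_0)$, where $X_0 \in \a$ is the unique normalized central element on which every weight of $\a$ on $\n_-$ takes the value $+1$. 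The $\a$-weights of $\wedge^2(\g_{\X}/\p)^* \otimes \g_{\X}$ evaluated at $X_0$ lie in $\{+1,+2,+3\}$ — namely $+2$ from $\wedge^2(\n_-)^*$ and $\{-1,0,+1\}$ from $\g_{\X} = \n_- \oplus \g_0 \oplus \p_+$ — so $\mathrm{Ad}(a_k^{-1})$ expands this target at rate at least $e^{T_k}$. Combined with the boundedness of $\ell_k,\ell_k'$ and of $\kappa$ near $\hat{x}$, the displayed equation forces $\kappa(\hat{x}) = 0$.

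The remaining, and hardest, step is to propagate pointwise vanishing to $\kappa \equiv 0$. Iterating the weight argument on the covariant derivatives $\nabla^j \kappa$ — whose $P$-representations carry even larger positive $\a$-weights on $X_0$ — gives infinite-order vanishing of $\kappa$ at $\hat{x}$. The zero set $Z(\kappa) \subset M$ is closed and $\Gamma$-invariant, and the argument above provides a point of $Z(\kappa)$ in any non-empty closed $\Gamma$-invariant subset. If $Z(\kappa) \neq M$, applying Lemma \ref{lem:localisation_lyapunov_data} to $\overline{M \setminus Z(\kappa)}$ would therefore produce a point of infinite-order vanishing on its boundary; rigidity of the parabolic Cartan equation (infinite-order vanishing of $\kappa$ at a point, under the presence of a contracting invariant sequence, extending to vanishing on a neighborhood) then contradicts this point being in $\overline{M\setminus Z(\kappa)}$. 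Hence $Z(\kappa) = M$, proving projective flatness. The pointwise weight analysis is routine; establishing the spreading step rigorously is the real technical obstacle, and is where the full strength of the refined Lyapunov information hinted at after Lemma \ref{lem:localisation_lyapunov_data} must be used.
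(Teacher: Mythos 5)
Your pointwise argument follows the paper's: the same dynamical input (a contracting, uniformly Lyapunov regular sequence $(\gamma_k)$ extracted from an $A$-ergodic measure on the suspension which cannot be $G$-invariant), the same passage to a holonomy sequence $p_k=\ell_k a_k \ell_k'$ with $a_k\in A_{\X}$, and the same equivariance/weight computation on $\Hom(\Lambda^2(\g_{\X}/\p),\g_{\X})$: a non-zero value of $\kappa$ would be expanded at rate $e^{2T_k}$ while boundedness of the holonomy only allows growth at rate $e^{(1+\epsilon)T_k}$. Two caveats there: you must also prove that the $\exp(\p_+)$-component of $p_k$ is bounded (this is not automatic from $KAK$ applied to the isotropy image; the paper establishes it by adapting Frances' argument, using $\gamma_k U\to\{x\}$), and your signs for the weights are off, although the conclusion --- a strictly positive net weight --- is correct.

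The genuine gap is the spreading step, and the route you propose does not close it. Infinite-order vanishing of $\kappa$ at one point does not imply vanishing on a neighborhood for a smooth, non-analytic Cartan geometry; there is no unique continuation principle for the curvature of a projective structure, and the ``rigidity of the parabolic Cartan equation'' you invoke is not a theorem. You correctly identify this as the obstacle but leave it unresolved. The paper avoids it entirely: the holonomy proposition is proved not only at $x$ but for every $y$ in a whole neighborhood $U$ of $x$, by transporting the holonomy sequence along the exponential map of the Cartan geometry via $\gamma_k\exp(b_k,X)a_k^{-1}=\exp(\gamma_k b_k a_k^{-1},\Ad(a_k)X)$, which stays bounded because $\Ad(a_k)X\to 0$. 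The weight argument then kills $\kappa$ on all of $\pi_B^{-1}(U)$, an \emph{open} set, and the global conclusion is the elementary observation that every $\Gamma$-orbit closure contains a point near which $\kappa$ vanishes identically, so no point of $M$ can lie in the closure of $\{\kappa\neq 0\}$. You should replace your unique-continuation step by this uniform-over-$U$ version of the holonomy estimate.
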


Throughout this section, $\X = \R P^n$ and $\g_{\X} = \sl(n+1,\R)$.

\subsection{Associated Cartan geometry modeled on $\R P^n$}
\label{ss:projective_flatness:cartan_geometry}

We note $P<\PGL(n+1,\R)$ the stabilizer of a line.

\begin{theorem*}[\cite{kobayashi_nagano}]
Let $(M^n,[\nabla])$ be a manifold with a projective class of linear connections. There exist a $P$-principal bundle $\pi_B : B \rightarrow M$ and a $1$-form $\omega \in \Omega^1(B,\g_{\X})$ satisfying the following properties:
\begin{enumerate}
\item for all $b \in B$, $\omega_b : T_bB \rightarrow \g_{\X}$ is a linear isomorphism,
\item for all $A \in \p$, $\omega(A^*) = A$,
\item for all $p \in P$, $(R_p)^* \omega = \Ad(p^{-1}) \omega$,
\end{enumerate}
where $R_p$ stands for the right action of $p$ on $B$ and $A^*$ denotes the fundamental vertical vector field associated to $A$, and such that $\Proj(M,[\nabla])$ is exactly the set of diffeomorphisms $f : M \rightarrow M$ that can be lifted to bundle morphisms $F : B \rightarrow B$ satisfying $F^* \omega = \omega$.
\end{theorem*}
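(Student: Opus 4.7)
The plan is to construct $(B,\omega)$ via the classical recipe for Cartan geometries of parabolic type with structure group $P<\SL(n+1,\R)$, following the Thomas--Whitehead and Kobayashi--Nagano normalization procedure. The statement has two halves: existence of the principal bundle and its Cartan connection, and the identification of the projective group with the automorphism group of $(B,\omega)$.

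First, I would build the principal $P$-bundle $\pi_B : B \to M$ in two stages suggested by the Levi decomposition $P = G_0 \ltimes P_+$, where $G_0$ projects isomorphically onto $\GL(n,\R)$ and $P_+ \cong \p_+$. Start with the linear frame bundle $F^1 M \to M$, a $G_0$-principal bundle. A projective class $[\nabla]$ does not select a single linear connection on $F^1M$, but an affine subspace parametrised by $\Omega^1(M)$ via $\nabla \mapsto \nabla + \alpha \otimes \Id + \Id \otimes \alpha$. I would therefore define $B \to F^1 M$ as a $P_+$-principal bundle whose fiber over a frame $u$ records the $1$-jet at $\pi(u)$ of a representative of $[\nabla]$, modulo those $1$-jets that vanish at $\pi(u)$. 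By construction, $B \to M$ is a $P$-principal bundle.

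Second, I would construct the $\g_{\X}$-valued form $\omega$. At each $b \in B$, decompose $T_b B = V_+ \oplus H$, where $V_+$ is the vertical of $B \to F^1 M$ and $H$ is a horizontal complement determined by the representative connection encoded in $b$. Define $\omega_b$ componentwise under the splitting $\g_{\X} = \n_- \oplus \g_0 \oplus \p_+$: the $\n_-$-component is the tautological soldering form inherited from the underlying linear frame; the $\g_0$-component is the connection $1$-form of the chosen representative, pulled back through $B \to F^1 M$; the $\p_+$-component identifies $V_+$ with $\p_+$ canonically. Properties (1) and (2) follow immediately. Property (3) reduces to checking compatibility of the three pieces separately under the $G_0$- and $P_+$-actions; the only nontrivial compatibility is that the $P_+$-action shifts the fiber coordinate on $B \to F^1 M$ by exactly the corresponding element of $\p_+$, which is built into the definition.

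Third, I would identify $\Proj(M,[\nabla])$ with the $\omega$-preserving bundle morphisms. If $f$ preserves $[\nabla]$, then its differential lifts $f$ to $F^1 M$, and its action on $1$-jets of connections lifts it further to $F : B \to B$. Since the three components of $\omega$ are built from data that $F$ preserves (frames, representatives of $[\nabla]$, fiber coordinates of $B \to F^1 M$), we have $F^* \omega = \omega$. Conversely, any $\omega$-preserving $F$ preserves the soldering $\n_-$-component, hence descends to a diffeomorphism $f$ of $M$; preservation of the full $\omega$ then forces $f$ to send geodesics of representatives of $[\nabla]$ to geodesics of representatives of $[\nabla]$, i.e. $f \in \Proj(M,[\nabla])$.

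The main technical obstacle, as usual in building parabolic Cartan geometries, is canonicity of the $\p_+$-component: to make $(B,\omega)$ an invariant of $[\nabla]$ alone (rather than of auxiliary choices), one must impose a normalization condition on the Cartan curvature of $\omega$ -- vanishing of a trace component valued in $\p_+$, which in concrete terms specifies the projective Schouten tensor. Verifying that this condition is well-posed and $P$-equivariant is the heart of the argument; the cleanest formulation uses Tanaka's prolongation procedure, or its parabolic reformulation in \cite{cap_slovak} through Kostant's computation of the Lie algebra cohomology $H^*(\n_-,\g_{\X})$.
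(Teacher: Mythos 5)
This is a statement the paper does not prove at all: it is imported verbatim as a classical theorem with the citation \cite{kobayashi_nagano}, so there is no in-paper argument to compare against. Judged on its own, your sketch is a correct outline of the standard modern construction, but it follows the Weyl-structure/prolongation route of \cite{cap_slovak} and \cite{tanaka} (build $B$ as a $P_+$-extension of the frame bundle whose fibre parametrises the pointwise values of the Christoffel symbols of representatives of $[\nabla]$, then assemble $\omega$ from soldering form, connection form and vertical identification according to $\g_{\X} = \n_- \oplus \g_0 \oplus \p_+$) rather than Kobayashi--Nagano's original approach, which realises $B$ as a reduction of the second-order frame bundle $F^2M$ and obtains $\omega$ from the canonical form there. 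Both are legitimate; yours makes the role of the grading and of $P = G_0 \ltimes P_+$ transparent, which fits the way the paper later uses $\n_-$, $\g_0$, $\p_+$ in Sections \ref{ss:uniformly_LR_on_X} and \ref{s:projective_flatness}.

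Two points deserve sharpening. First, your $\omega$ is not yet fully defined: you specify the $\p_+$-component only on the vertical $V_+$ of $B \to F^1M$, and its value on the horizontal complement $H$ is exactly the datum that the normalization (the projective Schouten tensor) is supposed to fix. For the theorem \emph{as stated} this is harmless --- declaring the $\p_+$-component to vanish on the splitting $H$ determined by the representative encoded in $b$ already yields a Cartan connection natural in $[\nabla]$, hence with $\Aut(B,\omega) = \Proj(M,[\nabla])$ --- but you should say explicitly which choice you make, since otherwise properties (1)--(3) cannot be ``checked''. Second, the part you defer to the last paragraph (existence and uniqueness of the normal connection via Kostant's computation of $H^*(\n_-,\g_{\X})$) is the genuinely nontrivial content of the Kobayashi--Nagano theorem if one wants the \emph{canonical} normal geometry, which is what the paper implicitly uses when it manipulates the curvature function $\kappa$ in Section \ref{s:projective_flatness}; a complete proof would have to carry that step out rather than cite it.
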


The triple $(M,B,\omega)$ is called the Cartan geometry associated to $(M,[\nabla])$, $\pi_B : B \rightarrow M$ its Cartan bundle and $\omega$ its Cartan connection. The first property implies that the action of $\Proj(M,[\nabla])$ on $B$ is free, and its Lie group structure is - by definition - such that its action on $B$ is moreover proper.

\subsection{Uniform Lyapunov spectrum}
\label{ss:projective_flatness:uniform_lyapunov}

We reuse some of the notations of \cite{BFH}, which we recalled in Section 2.1 of \cite{pecastaing_conformal_lattice}. We note $M^{\alpha} \rightarrow G/\Gamma$ the suspension fiber bundle. We fix $A < G$ a Cartan subspace. Let $\mu$ be any $A$-invariant $A$-ergodic measure on $M^{\alpha}$, which projects to the Haar measure on $G/\Gamma$. Let $\chi_1,\ldots,\chi_r \in \a^*$ be its Lyapunov functionals. Similarly to Proposition 4.1 of \cite{pecastaing_conformal_lattice}, we have:

\begin{lemma}
Such a measure $\mu$ cannot be $G$-invariant.
\end{lemma}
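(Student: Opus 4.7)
The strategy mirrors that of Proposition~4.1 in \cite{pecastaing_conformal_lattice}, the only modification being the replacement of the conformal parabolic Cartan geometry by the projective one of Section~\ref{ss:projective_flatness:cartan_geometry}. Assume for contradiction that $\mu$ is $G$-invariant. By the standard disintegration of $\mu$ along $M^{\alpha} \to G/\Gamma$, we obtain an $\alpha(\Gamma)$-invariant Borel probability measure $\nu$ on the fiber $M$, so the task reduces to ruling out such a $\nu$.

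The second step is to apply Zimmer's cocycle super-rigidity to the derivative cocycle $c(\gamma,x) = D_x \alpha(\gamma) \in \GL(n,\R)$, regarded over the $\nu$-preserving $\Gamma$-action on $M$. Since $\Gamma$ is a cocompact lattice in the higher-rank simple group $G$ and $c$ is bounded (because $M$ is compact), super-rigidity produces a continuous homomorphism $\pi : G \to \GL(n,\R)$, a measurable map $\phi : M \to \GL(n,\R)$ and a compact-valued measurable cocycle $K$ such that $c(\gamma,x) = \phi(\gamma x)^{-1} \pi(\gamma) K(\gamma,x) \phi(x)$ for $\nu$-almost every $x$. Moreover, since $c$ lifts to a $P$-valued cocycle on the Cartan bundle $B$ preserving the Cartan connection $\omega$, $\pi$ can be arranged to be compatible with the parabolic structure.

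The technical heart of the argument, and the main obstacle, is the promotion of this measurable factorization to a regular one. This relies on the rigidity of $\omega$ as a rigid geometric structure in the sense of Gromov: the argument in Section~4 of \cite{pecastaing_conformal_lattice} uses only the formal properties of parabolic Cartan geometries and transfers verbatim to the projective situation, yielding that $\phi$ can be chosen essentially continuous on a $\Gamma$-invariant open set and that $\pi$ integrates to a smooth projective action of $G$ on a finite cover $\hat M$ of $M$ preserving the lift of $\nu$.

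Finally, once such a $G$-action is in hand, the hypothesis $\Rk_{\R} G = n = \dim M$ yields the contradiction: by the main theorem of \cite{BFH}, the restricted root system of $\g$ must be of type $A_n$; by \cite{zhang}, $\g$ is not isomorphic to $\sl(n+1,\C)$; and the only remaining possibilities, combined with the rigidity of the projective structure, produce a standard projective $G$-action on (a cover of) $\R P^n$, which admits no finite invariant Borel probability measure. This contradicts the existence of $\nu$ and shows that $\mu$ cannot be $G$-invariant.
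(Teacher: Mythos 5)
Your first step (disintegrating $\mu$ to get a finite $\alpha(\Gamma)$-invariant measure $\nu$ on $M$) and your identification of cocycle super-rigidity as the relevant tool match the paper. But from there your argument diverges and has a genuine gap. The whole point of applying super-rigidity here is that the homomorphism $\pi$ it produces must be \emph{trivial}: the paper works with the $P$-valued cocycle on the Cartan bundle, where $P \simeq \GL(n,\R)\ltimes\R^n$, and observes that there is no non-trivial Lie algebra homomorphism $\g \rightarrow \gl(n,\R)\ltimes\R^n$ because $\Rk_{\R}\g = n$ while the semisimple part of $\gl(n,\R)$ has real rank $n-1$ (the same remark applies to your derivative cocycle valued in $\GL(n,\R)$). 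Hence the cocycle is measurably cohomologous to a \emph{compact-valued} one; this yields a finite $\Gamma$-invariant measure on $B$, and since $\Proj(M,[\nabla])$ acts freely and properly on $B$, the image $\alpha(\Gamma)$ is relatively compact, preserves a Riemannian metric, and is therefore finite by the argument of Section 7 of \cite{BFH} --- contradicting the hypothesis that $\alpha(\Gamma)$ is infinite. You never draw the triviality conclusion, which is the one consequence of super-rigidity that the rank hypothesis makes available and that drives the whole proof.

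Instead, you attempt to ``promote the measurable factorization to a regular one'' and to integrate $\pi$ to a smooth projective $G$-action on a finite cover of $M$. This is not what Section 4 of \cite{pecastaing_conformal_lattice} does, it does not transfer ``verbatim'' from anything cited, and it is a substantially harder (and here unnecessary) claim of Gromov--Zimmer type that you do not justify. Your closing contradiction is also not established: even granting a $G$-action on some cover $\hat{M}$, you would still need to show it is measurably conjugate to the standard action on $\R P^n$ before the non-existence of a finite invariant measure on $\R P^n$ could contradict the existence of $\nu$ on $M$. As written, the proof does not go through; replacing the integration step by the observation that $\pi$ is forced to be trivial, and then following the compact-valued-cocycle route through the Cartan bundle, repairs it.
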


\begin{proof}
Let us assume to the contrary that $\mu$ is $G$-invariant. Then, we get a $\Gamma$-invariant finite measure $\nu$ on $M$. Considering the action of $\Gamma$ on the Cartan bundle $B \rightarrow M$ associated to $[\nabla]$, super-rigidity implies that the cocycle $\Gamma \times M \rightarrow P$ is measurably cohomologous to a compact valued cocyle, as there is no non-trivial homomorphism $\g \rightarrow \mathfrak{gl}(n,\R) \ltimes \R^n$. By the same arguments as in the proof of Lemma 4.4 of \cite{pecastaing_conformal_lattice}, this implies that there exists a finite $\Gamma$-invariant measure $\nu_B$ on $B$. Since $\Proj(M,\nabla)$ acts freely and properly on $B$, it follows that the action $\alpha : \Gamma \rightarrow \Proj(M,\nabla)$ has relatively compact image (see Lemma 4.3 of \cite{pecastaing_conformal_lattice}). In particular, the action preserves a Riemannian metric on $M$, implying that $\alpha$ takes values in a compact Lie group of dimension at most $n(n+1)/2$, hence that $\alpha(\Gamma)$ is finite (see Section 7 of \cite{BFH}), a contradiction.
\end{proof}

Since $\dim M=n=\Rk_{\R}G$, we have $r \leq n$. On the other hand, if $\chi_1,\ldots,\chi_r$ spanned a space of dimension strictly less than $n$, we would get a direction $X \in \a$ on which all the $\chi_i$'s vanish. By Proposition 4.7 of \cite{pecastaing_conformal_lattice} - which is a citation of a central property of the work of \cite{BFH} -, it would imply that $\mu$ is $G$-invariant, a contradiction.

Thus, $r=n$ and $\chi_1,\ldots,\chi_n$ are linearly independent. So, they define a line in $\a$ in restriction to which they all coincide, and similarly to Section 6.2 of \cite{pecastaing_conformal_lattice}, there exists $X \in \a$ such that $\chi_1(X)=\cdots=\chi_n(X)=-1$. The proof of Proposition 6.1 of \cite{pecastaing_conformal_lattice} applies - no conformal geometry is involved in this proposition - and we obtain $g \in G$ and $x \in M$ such that $[(g,x)] \in \Supp \mu$, a sequence $(\gamma_k)$ in $\Gamma$, $(T_k) \rightarrow \infty$ and an open neighborhood $U$ of $x$ such that 
\begin{enumerate}
\item $\gamma_k U \rightarrow \{x\}$ for the Hausdorff topology,
\item $\frac{1}{T_k} \log | D_x \gamma_k v| \rightarrow -1$ for all non-zero $v \in T_xM$
\item $\frac{1}{T_k} \log |\det \text{Jac}_x \gamma_k| \rightarrow -n$.
\end{enumerate}

\subsection{Holonomy sequences associated to $\gamma_k$}
\label{ss:projective_flatness:holonomy_sequences}

Let $\pi_B : B \rightarrow M$ be the Cartan bundle corresponding to $[\nabla]$, with structural group $P \simeq \GL(n,\R) \ltimes \R^n$. Let $A_{\X} < P$ be the Cartan subspace formed of diagonal matrices with positive entries.

\begin{proposition}
Reducing $U$ if necessary, there is a sequence $(a_k)$ in $A_{\X}$ such that for all $y \in U$, there exists a bounded sequence $b_k \in \pi^{-1}(y)$ such that the sequence $\gamma_k b_k a_k^{-1}$ is bounded. Moreover, if $A_k \in \a_{\X}$ is such that $a_k = \exp(A_k)$, we have
\begin{equation*}
\frac{1}{T_k} A_k \rightarrow \diag \left (\frac{n}{n+1},-\frac{1}{n+1},\ldots,-\frac{1}{n+1}\right ).
\end{equation*}
\end{proposition}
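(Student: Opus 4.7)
The plan is to lift the dynamics of $\gamma_k$ to the Cartan bundle $\pi_B : B \to M$ and analyse the resulting holonomy cocycle. Reducing $U$ if necessary, fix a smooth section $\sigma : U \to B$ of $\pi_B$ and set $b_0 := \sigma(x)$. Each $\gamma_k \in \Proj(M,[\nabla])$ lifts uniquely to an $\omega$-preserving bundle automorphism $\bar\gamma_k$ of $B$. For $k$ large, hypothesis (1) ensures $\gamma_k \bar U \subset U$, so one defines the holonomy cocycle $p_k(y) \in P$ by $\bar\gamma_k \sigma(y) = \sigma(\gamma_k y) p_k(y)$ for $y \in U$. A standard argument based on the isometric action of $\bar\gamma_k$ on $(B, g_\omega)$---where $g_\omega$ is the Riemannian metric on $B$ obtained by pulling any inner product on $\g_{\X}$ back through $\omega$---shows that the relative cocycle $p_k(y) p_k(x)^{-1}$ stays in a fixed compact subset of $P$, uniformly in $y \in \bar U$ and $k$. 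The statement therefore reduces to producing, for $p_k := p_k(x)$, a factorisation $p_k = r_k a_k s_k$ with $a_k \in A_{\X}$ and $r_k, s_k$ bounded in $P$: taking $b_k := \sigma(y) s_k^{-1}$ (with $s_k$ from the factorisation at $x$) and absorbing the bounded correction $p_k(y) p_k(x)^{-1}$ into the left factor then gives a bounded lift of $y$ for which $\bar\gamma_k b_k a_k^{-1}$ is bounded.

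To identify $a_k$, I exploit the $\omega$-invariance: under the identifications $T_xM \simeq T_{\gamma_k x}M \simeq \g_{\X}/\p \simeq \n_-$ provided by $\omega$ at $b_0$ and $\sigma(\gamma_k x)$, the derivative $D_x\gamma_k$ is realised by $\Ad(p_k)$ on $\g_{\X}/\p$. Since $P_+$ acts trivially on this quotient, this descends to $\Ad(g_k)|_{\n_-}$, where $p_k = g_k u_k$ is the Levi decomposition ($g_k \in G_0 \simeq \GL(n,\R)$, $u_k \in P_+ \simeq \R^n$). A $KAK$-decomposition in $G_0$ writes $g_k = k_k a_k k_k'$ with $a_k \in A_{\X}$ and $k_k, k_k'$ in a maximal compact of $G_0$, hence bounded. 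Hypothesis (2) forces every singular value of $\Ad(a_k)|_{\n_-}$ to have asymptotic rate $-1$: writing $A_k = \diag(\mu_0^{(k)}, \mu_1^{(k)}, \ldots, \mu_n^{(k)})$, this amounts to $(\mu_i^{(k)} - \mu_0^{(k)})/T_k \to -1$ for each $i \geq 1$. Combined with the trace-zero constraint $\sum_{i=0}^n \mu_i^{(k)} = 0$ of $A_k \in \a_{\X}$, a short computation gives $\mu_0^{(k)}/T_k \to n/(n+1)$ and $\mu_i^{(k)}/T_k \to -1/(n+1)$ for $i \geq 1$, which is the announced limit.

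The main obstacle is to prove that $u_k$ stays in a compact subset of $P_+$; once this is granted, $p_k = k_k \cdot a_k \cdot (k_k' u_k)$ is of the desired form. Since hypothesis (2) constrains only the $G_0$-part of $p_k$, this must come from the topological contraction of hypothesis (1). I would extract it by analysing $\gamma_k$ in the Cartan normal chart given by the exponential map $\exp^\omega_{b_0}$ of the Cartan connection at $b_0$, in which the projection $\pi\circ\exp^\omega_{b_0}(Z)$ depends on the full $Z \in \g_{\X}$---not merely on its class modulo $\p$---through Baker--Campbell--Hausdorff-type corrections. This mirrors the flat model: in the affine chart $\n_- \simeq \R^n$ of $\R P^n$, the element $\exp(Y_{w_k}) \in P_+$ acts as $v \mapsto v/(1 + w_k \cdot v)$, with a singular affine hyperplane $\{w_k \cdot v = -1\}$ at Euclidean distance $1/|w_k|$ from $x$. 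An unbounded $u_k$ would bring this singular locus arbitrarily close to $x$, so it would eventually meet $U$; points of $U$ near it would then be driven by $\gamma_k$ out of every neighbourhood of $x$, contradicting $\gamma_k U \to \{x\}$. This adapts the argument of Lemma \ref{lem:gk} (a variant of Lemma 4.3 of \cite{frances_degenerescence}) from the model space to the curved setting via the Cartan connection.
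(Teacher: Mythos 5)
Your identification of $a_k$ is correct and follows the paper's route: realise $D_x\gamma_k$ through the isotropy representation $\rho : P \to \GL(\g_{\X}/\p)$, note that $P_+$ acts trivially on the quotient, apply $KAK$ in $G_0$, and combine the uniform Lyapunov rate $-1$ with the trace-zero constraint to get $\frac{1}{T_k}A_k \to \diag(\frac{n}{n+1},-\frac{1}{n+1},\ldots,-\frac{1}{n+1})$. Your plan for bounding the $P_+$-component is also the paper's (it adapts Fait 4.4 / p.~17 of \cite{frances_degenerescence}: an unbounded $\p_+$-part produces $x_k \to x$ with $\gamma_k x_k \not\to x$, contradicting $\gamma_k U \to \{x\}$); you leave it at the level of a sketch, but so, essentially, does the paper.

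The genuine gap is the uniformity over $y \in U$. Your reduction rests on the claim that, for a fixed smooth section $\sigma$, the relative cocycle $p_k(y)p_k(x)^{-1}$ is uniformly bounded, and that this follows from the isometric action of $\bar\gamma_k$ on $(B,g_\omega)$. Neither is true. The isometry argument only gives that $\bar\gamma_k\sigma(y)=\sigma(\gamma_k y)p_k(y)$ lies in a $g_\omega$-ball of fixed radius around $\sigma(\gamma_k x)p_k(x)$; but $g_\omega$ is not right-invariant (right translation by $p$ distorts it by $\Ad(p^{-1})$), so such a ball contains points $b'p'$ with $p'p_k(x)^{-1}$ unbounded. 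Concretely, what one gets is $p_k(y)=h_k\,p_k(x)\,c_k$ with $h_k,c_k$ bounded, whence $p_k(y)p_k(x)^{-1}=h_k\,p_k(x)c_kp_k(x)^{-1}$, and $\Ad(p_k(x))\sim\Ad(a_k)$ \emph{expands} $\p_+$ at rate $e^{T_k}$. Already in the flat model with $\gamma_k=a_k$ and a section of the form $\sigma(y)=e^{Y_y}e^{Z_y}$ with $Z_y\in\p_+$ not identically zero, one finds $p_k(y)p_k(x)^{-1}=(\mathrm{bdd})\exp(\Ad(a_k)Z_y)(\mathrm{bdd})$, which diverges. So the correct statement is not that $b_k$ can be taken of the form $\sigma(y)s_k^{-1}$ for a fixed section: the fiber points over $y$ must be chosen adaptively. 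The paper does this with the exponential map of the Cartan connection based at the bounded points $b_k\in\pi_B^{-1}(x)$: from the equivariance $\gamma_k\exp(b_k,X)a_k^{-1}=\exp(\gamma_k b_k a_k^{-1},\Ad(a_k)X)$ for $X\in\n_-$, and the fact that $\Ad(a_k)$ \emph{contracts} $\n_-$, the sequence $\exp(b_k,X_k)$ over $y$ is bounded and has the required holonomy. Your argument needs to be replaced by this (or an equivalent $\n_-$-adapted gauge); as written, the ``for all $y\in U$'' part of the Proposition is not established.
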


\begin{proof}
As $\gamma_k x \rightarrow x$, if $b \in \pi_B^{-1}(x)$, we can choose $p_k' \in P$ such that $\gamma_k b p_k'^{-1}$ is bounded (a \textit{holonomy sequence} for $\gamma_k$ at $b$ in the terminology introduced by Frances). If we decompose $p_k'$ according to $P = G_0 \ltimes \exp(\p^+)$ and if we use the Cartan decomposition of $G_0$, we can write $p_k'  = l_k a_k l_k' \tau_k$, with $a_k \in A_{\X}$, $l_k,l_k' \in G_0$ bounded and $\tau_k \in \exp(\p^+)$. So, if $b_k := bl_k'^{-1}$ and if $\tau_k$ is replaced by $l_k' \tau_k l_k'^{-1} \in \exp(\p^+)$, the we get that $\gamma_k b_k (a_k \tau_k)^{-1}$ is bounded, with $b_k \in \pi^{-1}(x)$ bounded. Let us note $p_k = a_k \tau_k$.

Let $\rho : P \rightarrow \GL(\g_{\X} / \p)$ be the representation induced by the adjoint map. Similarly to Lemma 6.11 of \cite{pecastaing_conformal_lattice}, we have 

\begin{lemma}
\label{lem:lyapunov_holonomy}
Let $(f_k)$ be a sequence of projective maps $(M,[\nabla])$ and $x \in M$ such that $(f_k(x)) \rightarrow x_{\infty}$. The following are equivalent.
\begin{enumerate}
\item $(f_k)$ is Lyapunov regular at $x$, with Lyapunov exponents $\chi_i$ of multiplicity $d_i$.
\item For any $b$ in the fiber of $x$ and any sequence $(p_k)$ in $P$ such that $f_k(b).p_k^{-1} \rightarrow b_{\infty}$, for some $b_{\infty}$ in the fiber of $x_{\infty}$, the sequence $\rho(p_k)$ is Lyapunov regular with Lyapunov exponents $\chi_i$ and multiplicity $d_i$.
\end{enumerate}
\end{lemma}

In our situation, $\gamma_k$ is Lyapunov regular at $x$ with a non-zero Lyapunov exponent of multiplicity $n$. Since $\gamma_k . b . (p_k l_k')^{-1}$ is bounded by construction, up to an extraction, it follows that $\rho(p_k l_k')$ is a Lyapunov regular sequence with a non-zero Lyapunov exponent of multiplicity $n$. From Lemma 6.10 of \cite{pecastaing_conformal_lattice}, we deduce that $\rho(p_k)$ has the same property. Moreover, since $\exp(\p^+)$ is in the kernel of $\rho$, if we note 
\begin{equation*}
a_k =
\begin{pmatrix}
\lambda_0^{(k)} & & \\
& \ddots & \\
& & \lambda_n^{(k)}
\end{pmatrix},
\end{equation*}
we get that $\rho(p_k) = \rho(a_k)$ is conjugate to the diagonal matrix $\diag(\lambda_1^{(k)} {\lambda_0^{(k)}}^{-1},\ldots,\lambda_n^{(k)} {\lambda_0^{(k)}}^{-1})$. If $A_k \in \a_{\X}$ is such that $a_k = \exp(A_k)$, the property of $\rho(p_k)$ means
\begin{equation*}
\frac{1}{T_k} A_k \rightarrow \diag \left (\frac{n}{n+1},-\frac{1}{n+1},\ldots,-\frac{1}{n+1}\right ).
\end{equation*}
We claim now that $(\tau_k)$ is bounded. This can be observed by adapting almost directly the proof of Fait 4.4 of \cite{frances_degenerescence}. 

Indeed, let us write $\tau_k  = \exp(T_k)$ with $T_k = (T_1^{(k)},\ldots,T_n^{(k)})$ (see Section \ref{ss:uniformly_LR_on_X}), and assume to the contrary that a sequence $(T_i^{(k)})$ is unbounded. Up to an extraction, we may assume that $|T_i^{(k)}| \rightarrow \infty$. Then, $p_k$ preserves the projectivization of $\Span(e_1,e_i)$ in $\R P^n$, and acts on it via the matrix
\begin{equation*}
\begin{pmatrix}
\lambda_0^{(k)} & \lambda_0^{(k)} T_i^{(k)} \\
0 & \lambda_i^{(k)}
\end{pmatrix}
\end{equation*}
Then, the same argumentation as in page 17 of \cite{frances_degenerescence} applies literally and gives a sequence of points $x_k \rightarrow x$ such that $\gamma_k x_k \rightarrow y \neq x$, contradicting the fact that $\gamma_k U \rightarrow \{x\}$ for the Hausdorff topology.

So, $(\tau_k)$ is bounded and consequently, if we replace $b_k$ by $b_k \tau_k^{-1}$ which is still bounded, the announced property is valid at $x$ with this choice of $a_k$. Let $\mathcal{U} \subset \n_-$ be a neighborhood of the origin on which the exponential map of the Cartan geometry (see \cite{sharpe}, Ch. 5) is defined at every $b_k$, which exists because $\{b_k\}$ is a relatively compact subset of the fiber $\pi^{-1}(x)$. Given the asymptotic properties of $\Ad(a_k)|_{\n_-}$, we may assume that $\Ad(a_k)$ preserves $\mathcal{U}$, and we have
\begin{equation*}
\forall X \in \mathcal{U}, \ \gamma_k \exp(b_k,X) a_k^{-1} = \exp(\gamma_k b_k a_k^{-1}, \Ad(a_k) X).
\end{equation*}
The fact that $\{b_k\}$ is relatively compact implies that $\cap_{k\geq 0} \pi_B(\exp(b_k,\mathcal{U}))$ is a neighborhood of $x$. If $y$ is in this neighborhood, there is $X_k \in \mathcal{U}$ such that $\pi_B(\exp(b_k,X_k)) = y$ and the formula above implies that $\exp(b_k,X_k)$ is a convenient sequence for $y$ since $\Ad(a_k)X_k$ goes to $0$.
\end{proof}

\subsection{Vanishing of the curvature map near $x$}
\label{ss:projective_flatness:vanishing_curvature}
 
We refer to \cite{sharpe}, Definition 3.22, Ch. 5, for the definition of the curvature map of a Cartan geometry. We note it $\kappa : B \rightarrow \Hom(\Lambda^2 (\g_{\X} / \p),\g_{\X})$. It is $\Aut(M,B,\omega)$-invariant and $P$-equivariant for the right action of $P$ on $\Hom(\Lambda^2 (\g_{\X} / \p),\g_{\X})$ given by $(p.w)(u,v) = \Ad(p^{-1})w(\Ad(p)u,\Ad(p)v)$ for all $w \in \Hom(\Lambda^2 (\g_{\X} / \p),\g_{\X})$ and $u,v \in \g_{\X}/\p$. In particular, if $\kappa$ vanishes at one point $b \in B$, then it vanishes on all of the fiber $b.P$.

Let $y \in U$ and $b_k \in \pi_B^{-1}(y)$ a bounded sequence such that $b_k' := \gamma_k b_k a_k^{-1}$ is bounded. We prove by contradiction that $\kappa$ vanishes in restriction to $\pi_B^{-1}(y)$. So, we assume that it is non-zero at every point of this fiber. Up to an extraction, $b_k \rightarrow b_{\infty}$, and in a basis of $\g_{\X}/\p$ that diagonalizes $\Ad(a_k)$, we pick two vectors $u_i,u_j$ such that $\kappa(b_{\infty})(u_i,u_j) \neq 0$. By equivariance, we have
\begin{align*}
\Ad(a_k)^{-1} \kappa(b_k')(u_i,u_j) & = \kappa(b_k)(\Ad(a_k)^{-1}u_i,\Ad(a_k)^{-1}u_j) \\
 & = {\lambda_0^{(k)}}^2 {\lambda_i^{(k)}}^{-1} {\lambda_j^{(k)}}^{-1} \kappa(b_k)(u_i,u_j).
\end{align*}
This proves that 
\begin{equation*}
\frac{1}{T_k} \log |\Ad(a_k)^{-1} \kappa(b_k')(u_i,u_j)| \rightarrow 2.
\end{equation*}
This is a contradiction because $\kappa(b_k')(u_i,u_j)$ is a bounded sequence in $\g_{\X}$ and for all $\epsilon >0$, $\Ad(a_k)$ acts diagonally on $\g_{\X}$ with all its eigenvalues of modulus at most $e^{(1+\epsilon)T_k}$ for $k$ large enough.

\vspace{.2cm}

This proves that $\kappa$ vanishes on all of $\pi_B^{-1}(U)$.

\subsection{Conclusion}
\label{ss:projective_flatness:conclusion}

We have proved that $\kappa$ vanishes near every point $b$ that projects to a point $x \in M$ such that there exists $g \in G$ such that $[(g,x)] \in \Supp \mu$, for any $A$-invariant, $A$-ergodic finite measure $\mu$ on $M^{\alpha}$ that projects to the Haar measure of $G/\Gamma$.

Similarly to Section 6.6 of \cite{pecastaing_conformal_lattice}, we deduce that for all $\Gamma$-invariant compact $K \subset M$, there is $b \in \pi_B^{-1}(K)$ such that $\kappa$ vanishes on a neighborhood of $b$. Applying this to any orbit closure $\overline{\Gamma.x} \subset M$, we obtain that $\kappa$ vanishes on all of $B$, whence $(M,[\nabla])$ is projectively flat.

\bibliographystyle{alpha}
\bibliography{bibli}

\end{document}